\theoremstyle{plain}
\newtheorem{defi}{Definition}[section]
\newtheorem{thm}{Theorem}[section]
\newtheorem{prop}{Proposition}[section]
\newtheorem{lem}{Lemma}[section]
\newtheorem{coro}{Corollary}[section]
\newtheorem{rem}{Remark}[section]
\newcommand{\N}{\mathbb{N}}
\newcommand{\Z}{\mathbb{Z}}
\newcommand{\F}{\mathbb{F}}
\newcommand{\pr}{\mathbb{P}^1}
\DeclareMathOperator{\Spm}{Spm}
\DeclareMathOperator{\Aut}{Aut}
\DeclareMathOperator{\Ker}{Ker}
\DeclareMathOperator{\res}{res}
\DeclareMathOperator{\coeff}{coeff}
\def\ms{\medskip}
\def\msn{\medskip\noindent}
\title{On M-O.Ore determinants\footnote{A tribute to 
 the contributions of O. Ore \cite{Or} without forgetting that E.H. Moore defined the said determinant \cite{Mo} }}
\author{Jean Fresnel, Michel Matignon }
\begin{document}

\maketitle
\begin{flushright}
To  Marco Garuti, 

a friend too soon lost
\end{flushright}

\begin{abstract}

The existence of certain $\F_q$-spaces of differential forms of the projective line over a field $K$ containing $\F_q$ leads us to prove an identity linking the determinant of the Moore matrix of $n$ indeterminates with the determinant of the Moore matrix of the cofactors of its first row. These same spaces give an interpretation of Elkies pairing in terms of residues of differential forms. This pairing puts in duality the $\F_q$-vector space of the roots of a $\F_q$-linear polynomial and that of the roots of its reversed polynomial.

\end{abstract}

\section{Introduction} 

Marco Garuti was rapporteur for Guillaume Pagot's thesis (\cite {P1, P2}) and the origin of this note is the following remark (\cite {P2} p. 68).

Let $K$ be a field with characteristic $p>0$ and  $W\subset K[X],$ be an $n$ dimensional  $\F_p$-subspace in $K[X]$ whose non zero elements  have the same degree $d$ and let $P$ be a non zero polynomial which is a common multiple  of the polynomials in $W$. Let  $(P_1,P_2,\cdots,P_n)$, be a $\F_p$-basis of $W$ such that for  $1\leq i\leq n$ each differential form  $\omega_i:=\frac{P_i}{P}dX$ is a logarithmic differential, then there is  $\gamma\in K^\star$ such that 
\begin{eqnarray}
\Delta_n(P_1,P_2,\cdots,P_n)=\gamma P^{1+p+p^2+\cdots+p^{n-2}}
\end{eqnarray}\label{pagot}
\noindent where $\Delta_n(P_1,P_2,\cdots,P_n)$
is the Moore determinant of the $n$ polynomials  $P_1,P_2,\cdots,P_n$ (Definition \ref{defi2.1}).

In Proposition \ref{propMAT}, we adapt the method of (\cite{Ma,P2}) where such a $\F_p$-space $W$ for $n\geq 2$ was built, in order to build some $\F_q$-space  of differential forms in  $ \Omega^1_K(K(X))$ 
that we call a $L^q_{\mu+1,n}$-space.

In section 3.2, we give a first application to a construction of Elkies pairing (\cite{E} § 4.35). N. Elkies takes up and extends the results of O. Ore.  For a $\F_q$-linear polynomial 
$P:=c_0X+c_1X^q+\cdots+c_nX^{q^n}$ with $c_0c_n\neq 0$, his pairing induces a duality between the $\F_q$-space of roots of  $P$ and that of its reversed polynomial. In our construction the role of the $\F_q$-vector space of the roots of the reversed polynomial is played by a $L^q_{\mu+1,n}$-space  of differential forms and the pairing is expressed by the residue of these forms evaluated at the roots of the polynomial $P$.

The rest of the note deals with the evaluation of the constant $\gamma$ 
in \eqref{pagot} when we apply it to $L^q_{\mu+1,n}$-subspaces  of $L^q_{\mu+1,n+m}$-spaces constructed in Proposition \ref{propMAT}. 

Thus formula \eqref{pagot} takes the following form (Corollary 
\ref{coro2.3})

\msn {\bf Corollary \ref{coro2.3}.}
{\em Let  $(\underline{Y}):=(Y_1,Y_2,\cdots,Y_n)$ and
$(\underline{X}):=(X_1,X_2,\cdots,X_m)$, be  $n+m$ indeterminates over $\F_q$ with $n\geq 2,\ m\geq 0$ and the convention that $\underline{X}=\emptyset $  and $\Delta_m(\underline{X})=1$ for $m=0$. 
We  write $(\underline{\hat Y_i}):=(Y_1,\cdots,Y_{i-1},Y_{i+1}\cdots,Y_n)$ for $1\leq i\leq n$. Then we have the following equality in $\F_q(\underline Y,\underline X)$ 

$$\frac{\Delta_n(\Delta_{n-1+m}(\underline{\hat Y_1},\underline{X}),\cdots,(-1)^{i+1}\Delta_{n-1+m}(\underline{\hat Y_i},\underline{X}),\cdots,(-1)^{n+1}\Delta_{n-1+m}(\underline{\hat Y_n},\underline{X}))}{
\Delta_m(\underline{X})^{q^{n-1}}\Delta_{n+m}(\underline{Y},\underline{X})^{1+q+\cdots+q^{n-2}}}=$$
$$
\frac{\Delta_n(\Delta_{n-1}(\underline{\hat Y_1}),\cdots,(-1)^{i+1}\Delta_{n-1}(\underline{\hat Y_i}),\cdots,(-1)^{n+1}\Delta_{n-1}(\underline{\hat Y_n}))}{
\Delta_{n}(\underline{Y})^{1+q+\cdots+q^{n-2}}}=:\gamma 
$$
}

Thanks to the work of Ore and Elkies (cf. Proposition \ref{Nprop2.4}) we know that $\gamma\in \F_q^\star$.

We show (Theorem \ref{thm1}) that $\gamma=1$.
We give three proofs. The first one shows it for $m=1$ and by induction on $n$. It is a technical exercise in computing determinants. The second proof is a matrix equality which is in itself original and which translates 
a relation between a generic Moore matrix and the Moore matrix of the cofactors of its first row (see theorem below); a relation analogous to the classical relation between a square matrix and its comatrix. The $m=0$ case of Theorem \ref{thm1}  is immediately deduced.

\msn {\bf Theorem \ref{inverse}.}
{\em Let $Y_1,Y_2,\cdots,Y_n$, $n$ be indeterminates over $\F_q$ and 
 
 \noindent ${\cal{M}}_{n}(\Delta_n(\underline{\hat Y_1}),.,(-1)^{i-1}\Delta_n(\underline{\hat Y_i}),
\cdots,(-1)^{n-1}\Delta_n(\underline{\hat Y_{n}}))$ be the Moore matrix of the cofactors 

\noindent $(\Delta_n(\underline{\hat Y_1}),.,(-1)^{i-1}\Delta_n(\underline{\hat Y_i}),
\cdots,(-1)^{n-1}\Delta_n(\underline{\hat Y_{n}}))$ of the first row of ${\cal{M}}_n(\underline{Y})$ where for $1\leq i\leq n$. We write $(\underline{\hat Y_i}):=(Y_1,\cdots,Y_{i-1},Y_{i+1}\cdots,Y_n)$,
then we have
$$
 {\cal{M}}_{n}(\Delta_n(\underline{\hat Y_1}),\cdots,(-1)^{i-1}\Delta_n(\underline{\hat Y_i}),
\cdots,(-1)^{n-1}\Delta_n(\underline{\hat Y_{n}}))\ ^t{\cal{M}}_n(\underline Y)=
$$
$$
=\left(
\begin{array}{ccccccc}
0&.&.&\cdots&.&0& (-1)^{n-1}\Delta_n(\underline Y)  \\
\Delta_n(\underline Y)&0&.&\cdots&.&0&0\\
\alpha_1&\Delta_n(\underline Y)^q&0&\cdots&.&0&0  \\
\alpha_2&\alpha_1^q&\Delta_n(\underline Y)^{q^2}&\cdots&.&0&0\\
\vdots&\vdots&\vdots&\cdots&.&\vdots&\vdots\\
\alpha_{n-2}&\alpha_{n-3}^q&.&\cdots&\alpha_1^{q^{n-3}}&\Delta_n(\underline Y)^{q^{n-2}}&0

\end{array}
\right)
$$
where $\alpha_k:=
\Delta_n(\underline{\hat Y_1})^{q^{k+1}}Y_1+\cdots+(-1)^{i-1}\Delta_n(\underline{\hat Y_i})^{q^{k+1}}Y_i+\cdots+(-1)^{n-1}\Delta_n(\underline{\hat Y_n})^{q^{k+1}}Y_n $.
}

\medskip
The third proof is a generalization of the above theorem which gives a matrix equality (Theorem \ref{thm2}) from which we deduce  Theorem \ref{thm1} without invoking  Corollary \ref{coro2.3}.

\ms
In section 5, we offer two illustrations of M-O.Ore determinants. In the first one we study the application $(a_1,\cdots,a_n)\in K^n \to (\Delta_{n-1}(\underline{\hat{a_i}}))_{1\leq i\leq n}\in K^n$ and in the second we study a $K$-etale algebra defined by $n$ Artin-Schreier equations. In this context we express a group action in terms of an appropriate Elkies pairing.

\section{Generalities and motivations}

\subsection{Notations}

In this note all rings are commutative and unitary and $A$ (resp. $K$) denotes a ring (resp. a field) of characteristic $p>0$ containing the field  $\F_q$ where $q:=p^s$.   Finally 
$F: A\to A$ with $F(a)=a^q$, denotes the Frobenius endomorphism.

We denote by $K^{alg}$ a $K$ algebraic closure. 
We adopt the following notations when the context is not ambiguous.

Let  $n\geq 1$, $m\geq 0$ be integers.

$(\underline{a}):=(a_1,a_2,\cdots,a_n)$, with $a_i\in A$ and $n\geq 1$,

$(\underline{X}):=(X_1,X_2,\cdots,X_m)$ be indeterminates over $A$ and let $m\geq 0$ with the convention $\underline{X}=\emptyset $ if $m=0$. The integer $m$ is determined by the context.


$(\underline{\hat a_i},\underline{X}):=(a_1,\cdots,\hat a_i,\cdots,a_n,\underline{X})$, i.e. we omit $a_i$ and $\underline{X}$ may be empty. The integers $n$ and $m$ are determined by the context.
\begin{defi}\label{defi2.1}
Let $A$ be a commutative ring containing the finite field  $\F_q$. Let $m,n\geq 1$ be integers and $\underline{a}:=(a_1,\cdots,a_n)\in A^n$. We call
{\rm Moore matrix} of size $m,n$ associated to $\underline{a}$, the matrix 
of $M_{m,n}(A)$ denoted  by
${\cal{M}}_{m,n}(\underline{a})$, (${\cal{M}}_n(\underline{a})$ if $n=m$) where
$$
{\cal{M}}_{m,n}(\underline{a}):=\left(
\begin{array}{cccc}
a_1&a_2&\cdots&a_n   \\
a_1^q&a_2^q&\cdots&a_n^q   \\
\vdots&\vdots&\cdots&\vdots\\
a_1^{q^{m-1}}&a_2^{q^{m-1}}&\cdots&a_n^{q^{m-1}}  
\end{array}
\right)
$$
\msn
and {\rm Moore determinant} associated to $\underline{a}$, the determinant of ${\cal{M}}_n(\underline{a})$ denoted by
$
\Delta_n(\underline{a}).
$

\end{defi}

\subsection{Additive polynomials and Moore determinants } 

\begin{defi}\label{defi2.2}
 Let $K$ be a field containing $\F_q$. We call \  {\rm  $ \F_q $-linear 
 polynomial} a polynomial of the form $c_nX^{q^n}+c_{n-1}X^{q^{n-1}}+\cdots+
 c_iX^{q^i}+\cdots+c_0X\in K[X]$.

It is easy to see that a polynomial $P\in K[X]$ is a  $\F_q$-linear polynomial if and only if it satisfies the following two conditions.
\begin{enumerate}

 \item $P(X+Y)=P(X)+P(Y)$ in the polynomials ring $K[X,Y]$
 \item $P(\lambda X)=\lambda P(X)$  for all $\lambda\in \F_q$.
 
A polynomial is {\rm additive} if it satisfies condition 1. An additive polynomial is said to be  
{\rm reduced} if it is separable. If it is non-zero, this means that the coefficient of $X$ is non-zero.
\end{enumerate}

Let $P\in K[X]$ be a $\F_q$-linear polynomial, let $\Ker P:=
\{x\in K^{alg}\ |\ P(x)=0\}$ be the set  roots of $P$; it is a  $\F_q$-subspace of $K^{alg}$.

The application 
$x\in K \to P(x)\in K$ is a $\F_q$-linear endomorphism of $K$. Thus we can consider the $\F_q$-subspace of $K$ which is the {\rm kernel} of this endomorphism. It coincides with $\Ker P$ when $\Ker P\subset K.$

\end{defi}

\begin{prop}\label{prop2.1}(\cite{Mo}, \cite{Or} and  \cite{E} prop.1 p.80)
 Let $A$ be an integral commutative ring containing $\F_q$. The $n$ elements of $A$, $a_1,a_2,\cdots,a_n$ are $\F_q$-linearly independent if and only if $\Delta_n(\underline{a})\neq 0$ . In other words the $n$ elements $a_1,a_2,\cdots,a_n$ of $A$ are $\F_q$-linearly independent if and only if the $n$ vectors $\underline a,F(\underline a),\cdots,F^{n-1}(\underline a)$ of $A^n$, are $\F_q$-linearly independent. 
\end{prop}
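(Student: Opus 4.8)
The plan is to prove both implications, with the forward direction (dependence $\Rightarrow$ vanishing) being immediate, and the converse requiring an induction on $n$ organized around viewing the Moore determinant as an $\F_q$-linear polynomial in its last argument.

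First I would dispose of the easy implication. Suppose $a_1,\dots,a_n$ are $\F_q$-linearly dependent, say $\sum_i \lambda_i a_i=0$ with $\lambda_i\in\F_q$ not all zero. Applying the Frobenius $F$ repeatedly and using that $F$ fixes $\F_q$ pointwise yields $\sum_i \lambda_i a_i^{q^k}=0$ for every $k\geq 0$. Thus the columns of ${\cal M}_n(\underline a)$ satisfy a nontrivial linear relation with coefficients in $\F_q\subseteq A$, so $\Delta_n(\underline a)=0$. The second formulation in the statement is just a transcription of this fact, since the $n$ vectors $\underline a, F(\underline a),\dots,F^{n-1}(\underline a)$ are exactly the rows of ${\cal M}_n(\underline a)$.

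For the converse I would first reduce to the case $A=K$ a field by passing to the fraction field of the integral domain $A$, which affects neither $\F_q$-linear independence nor the nonvanishing of $\Delta_n$. Then I argue by induction on $n$. The base case $n=1$ is $\Delta_1(a_1)=a_1\neq 0$. For the inductive step, assume $a_1,\dots,a_n$ are $\F_q$-linearly independent and set $P(X):=\Delta_n(a_1,\dots,a_{n-1},X)$. Expanding the determinant along its last column shows $P(X)=\sum_{i=1}^n c_i X^{q^{i-1}}$ with $c_i\in K$ independent of $X$; this is an $\F_q$-linear polynomial in $X$. Moreover the coefficient of the top power $X^{q^{n-1}}$ is the minor obtained by deleting the last row and column, namely $\Delta_{n-1}(a_1,\dots,a_{n-1})$, which is nonzero by the induction hypothesis since $a_1,\dots,a_{n-1}$ are independent. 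Hence $P$ is nonzero of degree exactly $q^{n-1}$.

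The heart of the argument, which I expect to be the main point, is a root count. Substituting $X=a_j$ for any $j<n$ makes two columns of the matrix equal, so $P(a_j)=0$; since $P$ is $\F_q$-linear, its zero set is an $\F_q$-subspace of $K$, and it therefore contains the whole span $V:=\mathrm{span}_{\F_q}(a_1,\dots,a_{n-1})$, which has exactly $q^{n-1}$ elements. But a nonzero polynomial of degree $q^{n-1}$ over the field $K$ has at most $q^{n-1}$ roots, so the zero set of $P$ is precisely $V$ and $P(X)=\Delta_{n-1}(a_1,\dots,a_{n-1})\prod_{v\in V}(X-v)$. Finally, $\F_q$-linear independence of $a_1,\dots,a_n$ forces $a_n\notin V$, so every factor $a_n-v$ is nonzero; as $K$ is a field this gives $\Delta_n(\underline a)=P(a_n)\neq 0$, completing the induction. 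Integrality is used exactly here: it underwrites both the bound on the number of roots and the conclusion that a product of nonzero factors cannot vanish.
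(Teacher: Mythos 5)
Your proof of the main equivalence ($\F_q$-linear independence iff $\Delta_n(\underline{a})\neq 0$) is correct and complete, but it takes a genuinely different route from the paper, whose proof is a one-line citation.

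The paper simply quotes Moore's identity \eqref{F1}: $\Delta_n(\underline{a})$ is the product of all elements $a_i+\epsilon_{i-1}a_{i-1}+\cdots+\epsilon_{1}a_1$ with $\epsilon_j\in\F_q$, so in an integral ring $\Delta_n(\underline{a})\neq 0$ exactly when no $a_i$ lies in the $\F_q$-span of $a_1,\dots,a_{i-1}$, i.e.\ exactly when the family is $\F_q$-free. You never invoke this identity: after the easy direction and the reduction to the fraction field, you prove independence $\Rightarrow$ nonvanishing by induction on $n$, using that $P(X):=\Delta_n(a_1,\dots,a_{n-1},X)$ is an $\F_q$-linear polynomial of degree $q^{n-1}$ with leading coefficient $\Delta_{n-1}(a_1,\dots,a_{n-1})\neq 0$, whose zero set is an $\F_q$-subspace that a root count forces to equal the span $V$ of $a_1,\dots,a_{n-1}$; then $\Delta_n(\underline{a})=P(a_n)\neq 0$ since $a_n\notin V$. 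Your intermediate factorization $P(X)=\Delta_{n-1}(a_1,\dots,a_{n-1})\prod_{v\in V}(X-v)$ is exactly the paper's formula \eqref{F2} of Proposition \ref{prop2.2} applied to $V$, and iterating it over $i=2,\dots,n$ re-derives precisely the case of Moore's identity that is needed; in that sense your argument proves, self-containedly, what the paper imports as a citation. The citation buys the paper brevity and the full product formula, which it genuinely needs again (it is the key tool in the proofs of Proposition \ref{prop2.8} and of Theorem \ref{thm1}), while your route is elementary and matches the root-counting style the paper itself uses in Propositions \ref{prop2.2} and \ref{prop2}.

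One caution, which concerns the paper's phrasing as much as your write-up: the ``in other words'' sentence is not the formal transcription you claim it is. Your main result does give one direction: the $a_i$ are $\F_q$-independent $\Rightarrow \Delta_n(\underline{a})\neq 0 \Rightarrow$ the rows $\underline{a},F(\underline{a}),\dots,F^{n-1}(\underline{a})$ are linearly independent over the fraction field, hence over $\F_q$. But the converse, ``rows $\F_q$-independent $\Rightarrow$ elements $\F_q$-independent'', does not follow formally from what you wrote, and read literally it is false: for $q=2$ and $a_1=a_2=\omega\in\F_4$ (with $\omega^2=\omega+1$) the elements are $\F_2$-dependent and $\Delta_2(\underline{a})=0$, yet the rows $(\omega,\omega)$ and $(\omega^2,\omega^2)$ are $\F_2$-independent, their sum being $(1,1)$. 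The second sentence of the statement is correct, and then an immediate rephrasing of $\Delta_n(\underline{a})\neq 0$, only if independence of the rows is taken over the fraction field of $A$, i.e.\ as the assertion that the Moore matrix has rank $n$.
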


This proposition is a consequence of Moore's identity 
(\cite{E} (3.4) p. 80, (3.6) p. 81) which says that if $a_1,a_2,\cdots,a_n$ are elements of $A$, then

\begin{eqnarray}
 \Delta_n(\underline{a})=\prod_{1\leq i\leq n}\prod_{\epsilon_{i-1}\in \F_q}\cdots\prod_{\epsilon_{1}\in \F_q}(a_i+\epsilon_{i-1}a_{i-1}+\cdots+\epsilon_{1}a_1)
\label{F1}
 \end{eqnarray}

\begin{prop}\label{prop2.2}
 Let $K$ be a field containing $\F_q$, $W$ a $\F_q$-vector subspace of $K$ with $\dim_{\F_q}W=n$, then there exists a unique unit polynomial of degree $q^n$, denoted $P_W$ with 
 $W=\{x\in K\ |\ P_W(x)=0\}$. Moreover $P_W$ is a $\F_q$-linear polynomial
 which is reduced and if $\underline{w}:=(w_1,\cdots,w_n)\in K^n$ is a $\F_q$-basis of $W$ then 
\begin{eqnarray}
P_W(X)=\prod_{w\in W}(X-w)=\frac{\Delta_{n+1}(\underline{w},X)}{\Delta_{n}(\underline{w})}=X^{q^n}+\cdots+(-1)^n\Delta_{n}(\underline{w})^{q-1}X
\label{F2}
\end{eqnarray}
\end{prop}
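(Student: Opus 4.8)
The plan is to take $P_W(X):=\prod_{w\in W}(X-w)$ as the natural candidate and verify every asserted property for it. Since $W$ is an $n$-dimensional $\F_q$-vector space it has exactly $q^n$ elements, so this product is monic of degree $q^n$ and its set of roots in $K$ (indeed in $K^{alg}$, since the roots already lie in $W\subset K$) is precisely $W$. For the uniqueness clause, any monic polynomial of degree $q^n$ whose root set is the $q^n$-element set $W$ must have $q^n$ distinct roots and hence coincides with $\prod_{w\in W}(X-w)$; this disposes of existence and uniqueness simultaneously.

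The key step is the identification with the ratio of Moore determinants, and this is where Moore's identity \eqref{F1} does the work. Applying \eqref{F1} to the $(n+1)$-tuple $(\underline{w},X)=(w_1,\cdots,w_n,X)$, I would isolate the outermost block indexed by $i=n+1$, namely
$$\prod_{\epsilon_n\in\F_q}\cdots\prod_{\epsilon_1\in\F_q}\bigl(X+\epsilon_n w_n+\cdots+\epsilon_1 w_1\bigr).$$
Because $(w_1,\cdots,w_n)$ is an $\F_q$-basis of $W$, the map $(\epsilon_1,\cdots,\epsilon_n)\mapsto \epsilon_1 w_1+\cdots+\epsilon_n w_n$ is a bijection onto $W$, so this block equals $\prod_{w\in W}(X+w)=\prod_{w\in W}(X-w)$, the last equality because $w\mapsto -w$ permutes $W$. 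The product of the remaining factors (those with $1\leq i\leq n$) is, again by \eqref{F1}, exactly $\Delta_n(\underline{w})$. Hence $\Delta_{n+1}(\underline{w},X)=\Delta_n(\underline{w})\prod_{w\in W}(X-w)$, which is the desired middle equality.

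It then remains to read off the shape of $P_W$, and here I would expand the determinant $\Delta_{n+1}(\underline{w},X)$ along its last column $(X,X^q,\cdots,X^{q^n})^t$. This exhibits $\Delta_{n+1}(\underline{w},X)$ as a combination of $X,X^q,\cdots,X^{q^n}$, hence as a $\F_q$-linear polynomial in $X$ in the sense of Definition \ref{defi2.2}; dividing by the constant $\Delta_n(\underline{w})$ preserves this. The cofactor of $X^{q^n}$ is $\det{\cal M}_n(\underline{w})=\Delta_n(\underline{w})$, yielding the monic leading term $X^{q^n}$ after division. The cofactor of $X$ carries the sign $(-1)^{1+(n+1)}=(-1)^n$ and is the determinant of the matrix with rows $F(\underline{w}),\cdots,F^n(\underline{w})$, i.e. the entrywise $q$-th power of ${\cal M}_n(\underline{w})$; since $a\mapsto a^q$ is a ring homomorphism and $q$ is a power of $p$, this determinant equals $\Delta_n(\underline{w})^q$, so the coefficient of $X$ in $P_W$ is $(-1)^n\Delta_n(\underline{w})^{q-1}$, as claimed.

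Finally, for reducedness: by Proposition \ref{prop2.1} the $\F_q$-independence of $w_1,\cdots,w_n$ gives $\Delta_n(\underline{w})\neq 0$, so the coefficient $(-1)^n\Delta_n(\underline{w})^{q-1}$ of $X$ is nonzero, whence $P_W$ is separable (its derivative is that nonzero constant); equivalently, separability is already visible from $P_W=\prod_{w\in W}(X-w)$ having $q^n$ distinct roots. I expect the main obstacle to be purely bookkeeping: keeping the ordering of the factors in \eqref{F1} straight so that the $i=n+1$ block cleanly separates from the $\Delta_n(\underline{w})$ block, and tracking the sign together with the Frobenius in the cofactor computation of the linear coefficient.
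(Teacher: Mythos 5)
The paper states Proposition \ref{prop2.2} without any proof, treating it as a classical result of Moore and Ore (cf. the references attached to Proposition \ref{prop2.1}), so there is no proof of record to compare yours against; on its own merits, your argument is correct and complete. It also uses exactly the tools the paper has already put in place: Moore's identity \eqref{F1}, applied to the $(n+1)$-tuple $(w_1,\cdots,w_n,X)$, splits $\Delta_{n+1}(\underline{w},X)$ into the $i=n+1$ block $\prod_{w\in W}(X-w)$ (using that $(\epsilon_1,\cdots,\epsilon_n)\mapsto\sum_i\epsilon_iw_i$ is a bijection onto $W$ and that $-W=W$) times the product of the blocks with $i\leq n$, which is $\Delta_n(\underline{w})$ and is nonzero by Proposition \ref{prop2.1}; the cofactor expansion along the last column then yields $\F_q$-linearity, the monic leading term, and the coefficient $(-1)^n\Delta_n(\underline{w})^{q-1}$ of $X$, where you correctly use that the entrywise Frobenius of a matrix has determinant equal to the $q$-th power of the original determinant (valid since $F$ is a ring homomorphism and fixes signs). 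Reducedness follows either from this nonzero linear coefficient or, as you note, from the distinctness of the $q^n$ roots, and your uniqueness argument (a monic polynomial of degree $q^n$ whose root set in $K$ is the $q^n$-element set $W$ must equal $\prod_{w\in W}(X-w)$) is sound. This is the standard derivation one would find in Goss or Elkies, and it is a legitimate filling-in of a step the authors chose to cite rather than prove.
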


The following proposition is the version adapted to hyperplanes in Proposition \ref{prop2.2}.

\begin{prop}\label{prop2} Let $W$ be a $\F_q$-subspace of $K$ with $\dim_{\F_q}W=n$, $\underline{w}:=(w_1,\cdots,w_n)\in K^n$ a $\F_q$-basis of $W$ and $\underline{w^\star}:(w_1^\star,\cdots,w_n^\star)$ its dual basis.
Let $\varphi$ be a non-zero $\F_q$-linear form on $W$ and $\Ker\varphi$ the hyperplane of $W$ kernel of $\varphi$. Let  $\underline{\alpha}:=(\alpha_1,\cdots,\alpha_n)\in \F_q^n-\{(0,\cdots,0)\}$ such that $\varphi=\sum_{1\leq i\leq n}\alpha_iw_i^\star$ and 
$$\Delta_{\varphi}(\underline{w},X)=\left|
\begin{array}{ccccc}
\alpha_1&\alpha_2&\cdots&\alpha_n&0\\
w_1&w_2&\cdots&w_n&X   \\
w_1^q&w_2^q&\cdots&w_n^q &X^q  \\
\vdots&\vdots&\cdots&\vdots&\vdots\\
w_1^{q^{n-1}}&w_2^{q^{n-1}}&\cdots&w_n^{q^{n-1}}  &X^{q^{n-1}}

\end{array}
\right| 
\ ,\delta_{\varphi}(\underline{w}):=\left|
\begin{array}{cccc}
\alpha_1&\alpha_2&\cdots&\alpha_n\\
w_1&w_2&\cdots&w_n  \\
w_1^q&w_2^q&\cdots&w_n^q   \\
\vdots&\vdots&\cdots&\vdots\\
w_1^{q^{n-2}}&w_2^{q^{n-2}}&\cdots&w_n^{q^{n-2}}   

\end{array}
\right|$$

\noindent Then $\delta_{\varphi}(\underline{w})\neq 0$ and like in (\ref{F2}) we can write
\begin{eqnarray}
P_{\ker \varphi}=\prod_{w\in \ker \varphi} (X-w)=\frac{\Delta_{\varphi}(\underline{w},X)}{\delta_{\varphi}(\underline{w})}=X^{q^{n-1}}+\cdots+(-1)^{n+1}\delta_{\varphi}(\underline{w})^{q-1}X,\label{F3}
   \end{eqnarray}     
        it is a $\F_q$-linear polynomial of degree $q^{n-1}$ which is reduced. 
Moreover we have for $1\leq i\leq n,\ 
    \Delta_{w_i^\star}(\underline{w},X)
 =$
 
 \noindent $(-1)^{i-1}\Delta_n(\underline{\hat w_i},X),\ 
        \Delta_{\varphi}(\underline{w},X)=\sum_{1\leq i\leq n}\alpha_i\Delta_{w_i^\star}(\underline{w},X)=
\sum_{1\leq i\leq n}\alpha_i(-1)^{i-1}\Delta_n(\underline{\hat w_i},X)  \ {\rm and }\ \delta_{\varphi}(\underline{w})=$

\noindent $\sum_{1\leq i\leq n}(-1)^{i-1}\alpha_i\Delta_{n-1}(\underline{\hat w_i}).$
\end{prop}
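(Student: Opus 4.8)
The plan is to reduce everything to cofactor expansions along the first rows of the two displayed matrices, together with the recognition that the minors that appear are themselves Moore determinants. First I would expand $\Delta_\varphi(\underline w,X)$ along its first row $(\alpha_1,\dots,\alpha_n,0)$: since the determinant is $\F_q$-linear in that row, one gets $\Delta_\varphi(\underline w,X)=\sum_i\alpha_i\,\Delta_{w_i^\star}(\underline w,X)$, where $\Delta_{w_i^\star}(\underline w,X)$ is the cofactor attached to $\alpha_i$. Deleting the first row and the $i$-th column of the $(n+1)\times(n+1)$ array leaves exactly the Moore matrix of the $n$ elements $(\underline{\hat w_i},X)$, so the corresponding minor is $\Delta_n(\underline{\hat w_i},X)$ and the sign $(-1)^{1+i}=(-1)^{i-1}$ yields $\Delta_{w_i^\star}(\underline w,X)=(-1)^{i-1}\Delta_n(\underline{\hat w_i},X)$. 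The identical expansion of the $n\times n$ array $\delta_\varphi(\underline w)$ along $(\alpha_1,\dots,\alpha_n)$ gives $\delta_\varphi(\underline w)=\sum_i(-1)^{i-1}\alpha_i\Delta_{n-1}(\underline{\hat w_i})$. These are the three identities at the end of the statement; I would also note here that each $\Delta_n(\underline{\hat w_i},X)$ is $\F_q$-linear in $X$ (expand along the $X$-column), hence so is $\Delta_\varphi(\underline w,X)$.

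Next I would prove $\delta_\varphi(\underline w)\neq 0$, which I expect to be the main obstacle because the quantity is tied to a specific basis. The key remark is its covariance under an $\F_q$-change of basis: if $v_j=\sum_k g_{kj}w_k$ with $g\in\GL_n(\F_q)$, then since the $g_{kj}$ are fixed by Frobenius we have ${\cal M}_{n-1,n}(\underline v)={\cal M}_{n-1,n}(\underline w)\,g$, while the coordinate row vector of $\varphi$ transforms as $\alpha\mapsto\alpha g$; stacking these shows that the matrix defining $\delta_\varphi(\underline v)$ equals that of $\delta_\varphi(\underline w)$ right-multiplied by $g$, whence $\delta_\varphi(\underline v)=\det(g)\,\delta_\varphi(\underline w)$. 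So non-vanishing is basis-independent, and I may compute it in a basis adapted to $\varphi$: choose a basis $v_1,\dots,v_{n-1}$ of $\ker\varphi$ and a $v_n$ with $\varphi(v_n)=1$, so that $\alpha=(0,\dots,0,1)$. Expanding $\delta_\varphi(\underline v)$ along its first row collapses it to $(-1)^{n+1}\Delta_{n-1}(v_1,\dots,v_{n-1})$, which is nonzero by Proposition \ref{prop2.1} because $v_1,\dots,v_{n-1}$ are $\F_q$-linearly independent.

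With $\delta_\varphi(\underline w)\neq 0$ in hand, I would identify $\Delta_\varphi(\underline w,X)/\delta_\varphi(\underline w)$ with $P_{\ker\varphi}$ by a degree-and-roots argument. Expanding $\Delta_\varphi(\underline w,X)$ along its last column shows that the coefficient of $X^{q^{n-1}}$ is exactly $\delta_\varphi(\underline w)$ (the minor obtained by deleting the last row and column), so $\Delta_\varphi$ has degree $q^{n-1}$ in $X$. For any $w=\sum_i\beta_i w_i\in\ker\varphi$, where $\sum_i\alpha_i\beta_i=\varphi(w)=0$, substituting $X=w$ and using $w^{q^j}=\sum_i\beta_i w_i^{q^j}$ makes the last column equal to $\sum_i\beta_i$ times the $i$-th column, so $\Delta_\varphi(\underline w,w)=0$. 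Thus the $q^{n-1}$ distinct elements of $\ker\varphi$ are roots of a degree-$q^{n-1}$ polynomial with leading coefficient $\delta_\varphi(\underline w)$, forcing $\Delta_\varphi(\underline w,X)=\delta_\varphi(\underline w)\prod_{w\in\ker\varphi}(X-w)$; dividing gives the monic $\F_q$-linear polynomial $P_{\ker\varphi}$ of degree $q^{n-1}$, which is reduced since its roots are distinct.

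Finally, to pin down the constant term I would extract the coefficient of $X=X^{q^0}$ in $\Delta_\varphi(\underline w,X)$ from the same last-column expansion: deleting the second row and last column yields the matrix whose Moore rows are the $q$-th powers of those of $\delta_\varphi(\underline w)$, and since the $\alpha_i$ lie in $\F_q$ this minor equals $\delta_\varphi(\underline w)^q$, carrying the sign $(-1)^{n+1}$. Dividing by $\delta_\varphi(\underline w)$ gives the linear coefficient $(-1)^{n+1}\delta_\varphi(\underline w)^{q-1}$ of $P_{\ker\varphi}$, completing the claimed normal form.
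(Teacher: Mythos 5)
Your proposal is correct, and in fact it is more explicit than the paper's own proof (which leaves the cofactor expansions and the identification of the coefficients of $X^{q^{n-1}}$ and $X$ implicit, ending with ``hence the proposition''). The interesting difference is in how the crucial non-vanishing is obtained, and in which order. The paper first proves that the \emph{polynomial} $\Delta_\varphi(\underline w,X)$ is not identically zero: if every coefficient of $X^{q^j}$ vanished, then $\underline\alpha$ would lie in the span of $\{F^i(\underline w)\colon i\neq j\}$ for each $j$, so all coordinates of $\underline\alpha$ in the basis $\underline w, F(\underline w),\dots,F^{n-1}(\underline w)$ of $K^n$ would vanish, contradicting $\underline\alpha\neq 0$. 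Only then does it deduce, from the vanishing of $\Delta_\varphi$ on the $q^{n-1}$ points of $\ker\varphi$, that the degree is exactly $q^{n-1}$, whence $\delta_\varphi(\underline w)\neq 0$ as the leading coefficient. You invert this logic: you prove $\delta_\varphi(\underline w)\neq 0$ \emph{first} and directly, via the covariance $\delta_\varphi(\underline v)=\det(g)\,\delta_\varphi(\underline w)$ under a change of basis $g\in\GL_n(\F_q)$ (valid because the entries of $g$ are Frobenius-fixed) and evaluation in a basis adapted to $\varphi$, where $\delta_\varphi$ collapses to $(-1)^{n+1}\Delta_{n-1}$ of a basis of $\ker\varphi$, nonzero by Proposition \ref{prop2.1}; the exact degree and the factorization then follow. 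Both routes ultimately rest on Proposition \ref{prop2.1}, but your covariance identity is a genuinely different (and reusable) ingredient not present in the paper: it makes the basis-independence of the non-vanishing transparent and yields an explicit value of $\delta_\varphi$ in the adapted basis, at the modest cost of introducing a second basis, whereas the paper's argument is shorter and stays entirely in the given basis.
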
 

\begin{proof}
First we show that $\Delta_{\varphi}(\underline{w},X)$ is not the null polynomial. Let us assume the opposite. Since 
$\underline{w},F(\underline{w}),\cdots,F^{n-1}(\underline{w})$ are 
$\F_q$-linearly independent and since for $j\in \{0,\cdots,n-1\}$, the coefficient of $X^{q^j}$ is zero, it follows that $\underline{\alpha}\in \sum_{i\in \{0,\cdots,n-1\},\ i\neq j} \F_q F^i(\underline{w});$ thus its $(j+1)$-th coordinate in the $\F_q$-basis 
$\underline{w},F(\underline{w}),\cdots,F^{n-1}(\underline{w})$ is zero, which contradicts the non-nullity of  $\underline{\alpha}.$

The polynomial $\Delta_{\varphi}(\underline{w},X)$ is thus a $\F_q$-linear polynomial of degree $\leq q^{n-1}$. We check that it is zero on the hyperplane $\ker \varphi$; thus its degree is equal to $q^{n-1}$. Hence the proposition.
\end{proof}

\begin{coro}
 Let  $K$ be a field containing $\F_q$,   $\underline{w}:=(w_1,w_2\cdots,w_n)\in K^n$ and for  $1\leq i\leq n$, $\Delta_{n-1}(\underline{\hat w_i}):=\Delta_{n-1}(w_1,\cdots,w_{i-1},w_{i+1},\cdots,w_n)$, 
 then
 $\Delta_n(\underline{w})\neq 0$ if and only if $\Delta_n(\underline{\Delta_{n-1}(\underline{\hat w_i})})=$
 
 \noindent $ \Delta_n(\Delta_{n-1}(\underline{\hat w_1}),\Delta_{n-1}(\underline{\hat w_2}),\cdots,\Delta_{n-1}(\underline{\hat w_n}))\neq 0$.
\end{coro}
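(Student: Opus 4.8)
The plan is to turn the corollary into a statement about linear independence of the cofactors. Write $C_i:=(-1)^{i-1}\Delta_{n-1}(\underline{\hat w_i})$ for $1\leq i\leq n$ (I assume $n\geq 2$; for $n=1$ the statement is degenerate). Since each $(-1)^{i-1}\in\F_q^\star$ and multiplying one argument of a Moore determinant by a scalar of $\F_q$ only multiplies the determinant by that scalar (the scalar is fixed by $F$, hence factors out of its column), one has $\Delta_n(\Delta_{n-1}(\underline{\hat w_1}),\cdots,\Delta_{n-1}(\underline{\hat w_n}))=\pm\,\Delta_n(C_1,\cdots,C_n)$; so the two determinants vanish simultaneously and it suffices to treat $\Delta_n(C_1,\cdots,C_n)$. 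By Proposition \ref{prop2.1}, $\Delta_n(\underline w)\neq 0$ means $w_1,\cdots,w_n$ are $\F_q$-linearly independent, and $\Delta_n(C_1,\cdots,C_n)\neq 0$ means the $C_i$ are $\F_q$-linearly independent; so the whole task reduces to showing that the $C_i$ are independent exactly when the $w_i$ are. The key computational device is the determinant $D(\underline\mu)$ of the $n\times n$ matrix whose first row is $\underline\mu=(\mu_1,\cdots,\mu_n)\in\F_q^n$ and whose remaining rows are $\underline w,F(\underline w),\cdots,F^{n-2}(\underline w)$: cofactor expansion along the first row gives the purely formal identity $D(\underline\mu)=\sum_i\mu_iC_i$, and when $\underline w$ is a basis $D(\underline\mu)$ is precisely the quantity $\delta_\varphi(\underline w)$ of Proposition \ref{prop2} with $\varphi=\sum_i\mu_iw_i^\star$.

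For the forward direction I would simply invoke Proposition \ref{prop2}. If $\Delta_n(\underline w)\neq 0$ then $\underline w$ is an $\F_q$-basis of $W:=\sum_i\F_qw_i$, of dimension $n$, and Proposition \ref{prop2} asserts $\delta_\varphi(\underline w)\neq 0$ for every nonzero form $\varphi$. In terms of $D$ this says $\sum_i\mu_iC_i\neq 0$ for every nonzero $\underline\mu\in\F_q^n$, which is exactly the $\F_q$-linear independence of $C_1,\cdots,C_n$; hence $\Delta_n(C_1,\cdots,C_n)\neq 0$.

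The hard part is the converse, and I expect it to be the main obstacle: from a single dependence among the $w_i$ I must manufacture an explicit $\F_q$-dependence among the cofactors. Assume $w_1,\cdots,w_n$ are dependent and fix $\underline\lambda\in\F_q^n\setminus\{0\}$ with $\sum_j\lambda_jw_j=0$. Two orthogonality facts drive the argument. First, applying $F^k$ and using $\lambda_j\in\F_q$ (so $\lambda_j^{q^k}=\lambda_j$) gives $\sum_j\lambda_jw_j^{q^k}=0$ for all $k\geq 0$. Second, the ``alien cofactor'' relations $\sum_j w_j^{q^k}C_j=0$ for $0\leq k\leq n-2$ hold, because this sum is the expansion of a determinant whose first row $F^k(\underline w)$ coincides with one of the rows $\underline w,F(\underline w),\cdots,F^{n-2}(\underline w)$, hence is zero. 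Thus both $\underline\lambda$ and $\underline C:=(C_1,\cdots,C_n)$ lie in the right kernel of the $(n-1)\times n$ Moore matrix $\mathcal{M}_{n-1,n}(\underline w)$.

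To finish I would split on whether some cofactor vanishes. If some $C_{i}=0$ the $C_i$ are trivially $\F_q$-dependent. Otherwise every $(n-1)$-minor $\Delta_{n-1}(\underline{\hat w_i})=\pm C_i$ is nonzero, so $\mathcal{M}_{n-1,n}(\underline w)$ has rank $n-1$ and its right kernel is the line $K\underline\lambda$; therefore $\underline C=c\,\underline\lambda$ for some $c\in K$. Since all $C_i\neq 0$ forces all $\lambda_i\neq 0$, and since $n\geq 2$, the two-term combination $\lambda_2C_1-\lambda_1C_2=\lambda_2c\lambda_1-\lambda_1c\lambda_2=0$ is a nontrivial $\F_q$-relation among the $C_i$. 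In every case the $C_i$ are $\F_q$-linearly dependent, so $\Delta_n(C_1,\cdots,C_n)=0$, and with the forward direction this yields the desired equivalence. The conceptual crux is recognizing the cofactor vector $\underline C$ as living in, and the dependence vector $\underline\lambda$ as the $\F_q$-rational generator of, the same one-dimensional $K$-kernel.
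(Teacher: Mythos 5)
Your proof is correct, and it splits cleanly against the paper's: the forward implication is the same argument (both deduce from Proposition \ref{prop2} that $\delta_\varphi(\underline w)=\sum_i(-1)^{i-1}\alpha_i\Delta_{n-1}(\underline{\hat w_i})\neq 0$ for every nonzero form $\varphi$, i.e.\ the cofactors are $\F_q$-independent, and conclude by Proposition \ref{prop2.1}), but your converse takes a genuinely different route. The paper's converse is a one-step orthogonality argument: from a dependence $\sum_i\epsilon_iw_i=0$ it picks any nonzero $\underline\alpha\in\F_q^n$ with $\sum_i\epsilon_i\alpha_i=0$ and observes that the bordered determinant with first row $\underline\alpha$ and remaining rows $\underline w,F(\underline w),\cdots,F^{n-2}(\underline w)$ vanishes outright, because its \emph{columns} satisfy the $\underline\epsilon$-relation (the Moore rows contribute $(\sum_i\epsilon_iw_i)^{q^k}=0$); expanding along the first row then hands over the required $\F_q$-relation among the cofactors uniformly, with no case distinction. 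You instead put both the cofactor vector $\underline C$ and the dependence vector $\underline\lambda$ in the right kernel of the rectangular Moore matrix $\mathcal{M}_{n-1,n}(\underline w)$ and argue by rank, which forces you to split off the case where some $C_i$ vanishes (there the rank-$(n-1)$ claim would fail, but dependence is trivial) and, in the generic case, yields the proportionality $\underline C=c\,\underline\lambda$ over $K$ before extracting the two-term $\F_q$-relation $\lambda_2C_1-\lambda_1C_2=0$. Both arguments are elementary, both implicitly need $n\geq 2$ (the paper to have $\ker f\setminus\{0\}\neq\emptyset$, you for the two-term relation), and all your individual steps check out: the identity $D(\underline\mu)=\sum_i\mu_iC_i$, the repeated-row vanishing giving $\sum_jw_j^{q^k}C_j=0$ for $0\leq k\leq n-2$, and the one-dimensionality of the kernel when a maximal minor is nonzero. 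What your detour buys is a sharper structural fact: generically the cofactor vector actually \emph{spans} the kernel of $\mathcal{M}_{n-1,n}(\underline w)$ and is a $K$-multiple of the $\F_q$-rational vector $\underline\lambda$, a statement in the spirit of the relations \eqref{FF20} that the paper only develops later in Theorem \ref{inverse}; what it costs is the case analysis that the paper's choice of $\underline\alpha$ orthogonal to $\underline\epsilon$ renders unnecessary.
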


\begin{proof}
Let us assume that $ \Delta_n (\underline {w}) \neq 0 $, then by Proposition \ref {prop2.1} and  \ref {prop2}, $\delta_{\underline{\varphi}}(\underline{w})=\sum_{1\leq i\leq n}\alpha_i\Delta_{n-1}(\underline{\hat w_i})\neq 0$ for all
$\underline{\alpha}:=(\alpha_1,\alpha_2,\cdots,\alpha_n)\in \F_q^n-\{(0,\cdots,0)\}$
where $\varphi=\sum_{1\leq i\leq n}\alpha_iw_i^\star$. It follows from Proposition \ref{prop2.1} that $\Delta_n(\underline{\Delta_{n-1}(\underline{\hat w_i})})\neq 0$.

Let us assume that  $\Delta_n(\underline{w})= 0$,  then by Proposition \ref{prop2.1}, there is $(\epsilon_1,\epsilon_2,\cdots,\epsilon_n)\in \F_q^n-\{(0,\cdots,0)\}$ with $\sum_{1\leq i\leq n}\epsilon_i w_i=0$. Let $f$ be the $\F_q$-linear form over $\F_q^n$ such that $f((\alpha_1,\cdots,\alpha_n))=\sum_{1\leq i\leq n}\epsilon_i \alpha_i$ and $(\alpha_1,\cdots,\alpha_n)\in \ker f-\{0\}$, then $\left|
\begin{array}{cccc}
\alpha_1&\alpha_2&\cdots&\alpha_n\\
w_1&w_2&\cdots&w_n  \\
w_1^q&w_2^q&\cdots&w_n^q   \\
\vdots&\vdots&\cdots&\vdots\\
w_1^{q^{n-2}}&w_2^{q^{n-2}}&\cdots&w_n^{q^{n-2}}   

\end{array}
\right|=0$, so
$\sum_{1\leq i\leq n}(-1)^{i-1}\alpha_i\Delta_{n-1}(\underline{\hat w_i})=0$.

\noindent Thus $\Delta_n(\underline{\Delta_{n-1}(\underline{\hat w_i})})= 0.$ 
\end{proof}

\begin{defi}\label{Ndefi2.3}
Let $P(X):=c_nX^{q^n}+c_{n-1}X^{q^{n-1}}+\cdots+
 c_iX^{q^i}+\cdots+c_0X\in K[X]$ be a reduced  $\F_q$-linear polynomial
 (i.e.
$c_0\neq 0$) of degree $q^n$ (i.e. $c_n\neq 0$).
With Ore we consider 
the {\rm reversed polynomial} $\rho P$ of the polynomial $P$ where 
$$(\rho P)(X):=\sum_{0\leq m\leq n}c_{n-m}^{q^m}X^{q^m}.$$
It is a reduced  $\F_q$-linear polynomial of degree $q^n$.
\end{defi}

O. Ore shows the following  result (see \cite{E}  Theorem 5 p.88).

\begin{prop}\label{nprop2.3} Let $K$ be a field containing $\F_q$. Let $P =\sum_{0\leq i\leq n}c_iX^{q^i}\in K[X]$ be a reduced $\F_q$-linear polynomial of degree $q^n$, $\rho P$ its reversed polynomial (Definition \ref{Ndefi2.3}). We assume that the roots of $P$ are in $K$. Let 
$W:=Ker P\subset K$ (Definition \ref{defi2.2}) and $\underline{w}:=(w_1,w_2,\cdots,w_n)\in K^n$ be a $\F_q$-basis of $W$. Let $\hat W\subset K$, the $\F_q$-subspace of $K$ spanned
by the $n$ minors $\Delta_{n-1}(\underline{\hat w_i}),\ 1\leq i\leq n$, then if $U: =\Ker \rho P$, we have $U= c_n^{-1}(\frac{\hat W}{\Delta_n(\underline{w})})^q$; this is a $\F_q$-subspace of $K$ of 
dimension $n$.
 
\end{prop}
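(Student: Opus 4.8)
The plan is to prove the equality by a dimension count: I will show that $U':=c_n^{-1}\bigl(\hat W/\Delta_n(\underline w)\bigr)^{q}$ is contained in $U=\Ker\rho P$ and that both are $\F_q$-spaces of dimension $n$, forcing $U=U'$. The dimension of $U$ is immediate: $\rho P$ is a reduced $\F_q$-linear polynomial of degree $q^{n}$ (Definition \ref{Ndefi2.3}), hence is separable with $q^{n}$ distinct roots, and these form an $\F_q$-space of dimension $n$. For $U'$, note that since $\Delta_n(\underline w)\neq0$ the Corollary preceding Definition \ref{Ndefi2.3} gives $\Delta_n(\Delta_{n-1}(\underline{\hat w_1}),\dots,\Delta_{n-1}(\underline{\hat w_n}))\neq0$, so by Proposition \ref{prop2.1} the minors $\Delta_{n-1}(\underline{\hat w_i})$ are $\F_q$-linearly independent and $\dim_{\F_q}\hat W=n$; as $v\mapsto c_n^{-1}v^{q}$ is an $\F_q$-linear bijection of $K$, we get $\dim_{\F_q}U'=n$. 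It therefore suffices to prove $U'\subseteq U$, and since $U'$ is a subspace it is enough to check that each generator $g_i:=c_n^{-1}\bigl(\Delta_{n-1}(\underline{\hat w_i})/\Delta_n(\underline w)\bigr)^{q}$ is a root of $\rho P$.

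The device I would use is a factorization of $P$ through a hyperplane, transported by the reversal. Fix $i$ and put $\varphi:=w_i^\star$, $H:=\ker\varphi$, a hyperplane of $W$ with $w_i\notin H$. Since $H\subseteq W$, the separable additive polynomials satisfy $P_W=M\circ P_H$ for a unique monic $\F_q$-linear $M$ of degree $q$ (take $M$ with $\Ker M=P_H(W)$; then $M\circ P_H$ is monic separable of degree $q^{n}$ with kernel $W$, so it equals $P_W$). The space $P_H(W)$ is the line $\F_q r$ with $r:=P_H(w_i)\neq0$, so $M(Y)=Y^{q}-r^{q-1}Y$, and $P=c_nP_W=(c_nM)\circ P_H$. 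The key structural input is that the reversal turns this into $\rho P=\bigl(\text{Frobenius twist of }\rho P_H\bigr)\circ\rho(c_nM)$; equivalently, writing $P^{\dagger}:=\sum_ic_i^{q^{-i}}\tau^{-i}$ for the Ore adjoint in $K\{\tau\}$ (with $\tau a=a^{q}\tau$), one has $\rho P=\tau^{\deg P}P^{\dagger}$ and $\dagger$ is an anti-homomorphism for composition. In all cases the innermost factor of $\rho P$ is $\rho(c_nM)$, so $\Ker\rho(c_nM)\subseteq\Ker\rho P$, and it remains to see that $g_i\in\Ker\rho(c_nM)$.

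For the final computation I would evaluate $r$ using Proposition \ref{prop2}. Writing $w_i=\sum_j\beta_jw_j$ with $\beta_j\in\F_q$ and subtracting $\sum_j\beta_j$ times the $j$-th column from the last column of $\Delta_\varphi(\underline w,X)$ specialised at $X=w_i$, the $\F_q$-linearity of the entries collapses that column to $(-\varphi(w_i),0,\dots,0)^{t}$, giving $\Delta_\varphi(\underline w,w_i)=(-1)^{n+1}\varphi(w_i)\Delta_n(\underline w)$. Since $\delta_{w_i^\star}(\underline w)=(-1)^{i-1}\Delta_{n-1}(\underline{\hat w_i})$, $P_H=\Delta_\varphi(\underline w,X)/\delta_\varphi(\underline w)$ and $\varphi(w_i)=1$, this yields $r=(-1)^{n-i}\Delta_n(\underline w)/\Delta_{n-1}(\underline{\hat w_i})$. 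Now $c_nM(Y)=c_nY^{q}-c_nr^{q-1}Y$, so by Definition \ref{Ndefi2.3} $\rho(c_nM)(Y)=c_nY-c_n^{q}r^{q(q-1)}Y^{q}$, whose nonzero roots are the $u$ with $u^{q-1}=c_n^{1-q}r^{-q(q-1)}$. A direct substitution, in which the sign $(-1)^{n-i}$ disappears because $q(q-1)$ is even, shows $g_i^{q-1}=c_n^{1-q}r^{-q(q-1)}$; hence $g_i\in\Ker\rho(c_nM)\subseteq\Ker\rho P$. This gives $U'\subseteq U$, and with the equal dimensions we conclude $U=U'$, so in particular the roots of $\rho P$ lie in $K$.

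The step I expect to be the real obstacle, as opposed to the determinant bookkeeping, is the compatibility of the reversal with composition: it is what converts the hyperplane factorization into the inclusion $\Ker\rho(c_nM)\subseteq\Ker\rho P$, and it is not established in the excerpt. I would secure it either by invoking Ore's reciprocity (the cited Theorem 5 of \cite{E}) or by checking directly that $\dagger$ reverses products in $K\{\tau\}$, an identity of positive Frobenius twists that is valid over any $\F_q$-algebra and so needs no perfectness hypothesis on $K$.
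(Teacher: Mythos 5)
Your proposal is correct, but it cannot be ``essentially the same approach as the paper'' for a simple reason: the paper gives no proof of Proposition \ref{nprop2.3} at all --- it is quoted as a result of Ore, with a pointer to \cite{E}, Theorem 5, p.~88. So where the paper has only a citation, you have produced a self-contained argument, and it checks out step by step: the dimension count for $U':=c_n^{-1}(\hat W/\Delta_n(\underline w))^q$ via the Corollary following Proposition \ref{prop2} together with Proposition \ref{prop2.1}; the hyperplane factorization $P_W=M\circ P_{\ker w_i^\star}$ with $M(Y)=Y^q-r^{q-1}Y$ and $r=P_{\ker w_i^\star}(w_i)$ (this is exactly the paper's Proposition \ref{prop2.5}, which you re-derive but could simply cite); the evaluation $r=(-1)^{n-i}\Delta_n(\underline w)/\Delta_{n-1}(\underline{\hat w_i})$ via Proposition \ref{prop2}; and the closing identity $g_i^{q-1}=c_n^{1-q}r^{-q(q-1)}$, where the sign does die because $q(q-1)$ is even. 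The one ingredient you flag as external --- compatibility of the reversal with composition --- is a genuine gap in the excerpt, but it is exactly as harmless as you claim: writing $A=\sum_i a_i\tau^i$, $B=\sum_j b_j\tau^j$ with $\deg_\tau A=a$, comparing coefficients gives $\rho(A\circ B)={}^{F^a}(\rho B)\circ\rho A$, where ${}^{F^a}$ raises all coefficients to the power $q^a$ (the coefficient of $\tau^k$ on both sides equals $\sum_{l+m=k}a_{a-m}^{q^k}b_{b-l}^{q^{a+l}}$); this involves only positive Frobenius twists, so it holds over any $\F_q$-algebra, and applied with $A=c_nM$ it yields $\Ker\rho(c_nM)\subseteq\Ker\rho P$ as you need. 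Two cosmetic points: $v\mapsto c_n^{-1}v^q$ is a bijection of $K$ only when $K$ is perfect --- injectivity is all you need and all that is true in general; and citing Proposition \ref{prop2.5} would shorten the factorization step. What your route buys, compared with the paper's bare citation, is a proof that stays inside the paper's own toolkit and shows in passing that the roots of $\rho P$ automatically lie in $K$.
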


\begin{rem} 
It follows from Proposition \ref{nprop2.3} (see also \cite{G}, Corollary 1.7.14 p.18) that if the $n$ elements $w_1,\cdots,w_n$ in $K$ are $\F_q$-independent, so are the $n$ elements $\Delta_{n-1}(\underline{\hat w_i}),\ 1\leq i\leq n$.  Although not explicitly written, O. Ore (\cite{Or}) and N. Elkies (\cite{E}) show the following result

\end{rem}
\begin{prop}\label{Nprop2.4}
Let  $K$ be a field containing $\F_q$,   $\underline{w}:=(w_1,w_2,\cdots,w_n)\in K^n$ and for  $1\leq i\leq n$, $\Delta_{n-1}(\underline{\hat w_i}):=\Delta_{n-1}(w_1,\cdots,w_{i-1},w_{i+1},\cdots,w_n)$. Let us assume that
$\Delta_n(\underline w)\neq 0$.
Then
\begin{eqnarray}
(\Delta_n(\underline{\Delta_{n-1}(\underline{\hat w_i})}))^{q-1}=\Delta_n(\underline w)^{q^{n-1}-1}.
\label{F4}
\end{eqnarray}
Thus
\begin{eqnarray}
\frac{\Delta_n(\underline{\Delta_{n-1}(\underline{\hat w_i})})}{\Delta_n(\underline w)^{1+q+\cdots+q^{n-2}}}\in \F_q^\star.
\label{F5}
\end{eqnarray}
 \end{prop}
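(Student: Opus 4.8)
The plan is to deduce both identities from Ore's reversed-polynomial description in Proposition \ref{nprop2.3}, together with the explicit shape of the vanishing polynomial of a subspace given in Proposition \ref{prop2.2}. Since $\Delta_n(\underline w)\neq 0$, Proposition \ref{prop2.1} shows the $w_i$ are $\F_q$-linearly independent, so they form a basis of the $n$-dimensional space $W:=\sum_{i}\F_q w_i\subset K$, whose roots $W$ all lie in $K$. Writing $D:=\Delta_n(\underline w)$ and $E:=\Delta_n(\underline{\Delta_{n-1}(\underline{\hat w_i})})$, Proposition \ref{prop2.2} gives that $P:=P_W$ is monic (so $c_n=1$) and reduced of degree $q^n$, with coefficient of $X$ equal to $c_0=(-1)^nD^{q-1}$. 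I would then compare $P$ with its reversed polynomial $\rho P$.

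By Definition \ref{Ndefi2.3}, the coefficient of $X$ in $\rho P$ is $c_n=1$ and its leading coefficient is $c_0^{q^n}$. On the other hand, Proposition \ref{nprop2.3} (with $c_n=1$) identifies $U:=\Ker\rho P$ with $(\hat W/D)^q$, so the elements $u_i:=(\Delta_{n-1}(\underline{\hat w_i})/D)^q$ form an $\F_q$-basis of $U\subset K$. Since $\rho P=c_0^{q^n}P_U$, applying Proposition \ref{prop2.2} to $U$ and matching the coefficient of $X$ yields the scalar identity $c_0^{q^n}(-1)^n\Delta_n(\underline u)^{q-1}=1$.

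It remains to express $\Delta_n(\underline u)$ through $D$ and $E$. For this I would use two elementary properties of the Moore determinant that follow directly from Definition \ref{defi2.1}: homogeneity, $\Delta_n(\lambda a_1,\dots,\lambda a_n)=\lambda^{1+q+\cdots+q^{n-1}}\Delta_n(\underline a)$, and Frobenius compatibility, $\Delta_n(a_1^q,\dots,a_n^q)=\Delta_n(\underline a)^q$. Applied to $u_i=(\Delta_{n-1}(\underline{\hat w_i})/D)^q$ they give $\Delta_n(\underline u)=D^{-q(1+q+\cdots+q^{n-1})}E^q$. Substituting this and $c_0=(-1)^nD^{q-1}$ into the identity above, the sign collapses because $(-1)^{n(q^n+1)}=1$ in characteristic $p$, and the exponents of $D$ simplify via $(q-1)(q+q^2+\cdots+q^{n-1})=q(q^{n-1}-1)$ to leave $(E^{q-1})^q=(D^{q^{n-1}-1})^q$. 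Since Frobenius is injective on the field $K$, this gives \eqref{F4}. Finally, as $(q-1)(1+q+\cdots+q^{n-2})=q^{n-1}-1$ and $D\neq 0$ forces $E\neq 0$, the ratio in \eqref{F5} is a nonzero $(q-1)$-th root of unity in $K$, hence lies in $\F_q^\star$.

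The main obstacle is not conceptual but bookkeeping: one must check that $U$ and the proposed basis $(u_i)$ genuinely lie in $K$ so that Proposition \ref{prop2.2} is applicable, and then track the exponents of $D$ and the ambient sign with care. The conceptual crux is the observation that the ``reversed'' constraint---namely that the coefficient of $X$ in $\rho P$ is exactly the leading coefficient $c_n=1$ of $P$---is precisely what forces the power relation between $E$ and $D$; everything else is the homogeneity and Frobenius scaling of $\Delta_n$.
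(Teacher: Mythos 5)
Your proof is correct, but it runs along a genuinely different track than the paper's. The paper also computes the coefficient of $X$ in the monic vanishing polynomial $P_{\hat W}$ of the space $\hat W=\bigoplus_i\F_q\Delta_{n-1}(\underline{\hat w_i})$, namely $\hat c_0=(-1)^n\bigl(\Delta_n(\underline{\Delta_{n-1}(\underline{\hat w_i})})\bigr)^{q-1}$, but then it invokes the explicit Ore--Elkies formula (\cite{E} 4.28) expressing $P_{\hat W}$ entirely in terms of the $c_m$ and $\Delta_n(\underline w)$; equating the two expressions for the coefficient of $X$ gives \eqref{F4} in one line, and equating the remaining coefficients yields the supplementary identities \eqref{F6} used later in Remark \ref{rem2.2}. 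You instead take as your external input Proposition \ref{nprop2.3} (the kernel description $U=\Ker\rho P_W=(\hat W/\Delta_n(\underline w))^q$, with $c_n=1$), write $\rho P_W=c_0^{q^n}P_U$, match the coefficient of $X$ there, and then translate $\Delta_n(\underline u)$ back to $\Delta_n(\underline{\Delta_{n-1}(\underline{\hat w_i})})$ via the homogeneity and Frobenius-compatibility of Moore determinants, finishing with injectivity of the $q$-power map. Your bookkeeping is sound: the basis claim for $U$ does follow from \ref{nprop2.3} (the map $x\mapsto(x/\Delta_n(\underline w))^q$ is an injective $\F_q$-linear map carrying the $n$ spanning minors of $\hat W$ onto a basis of the $n$-dimensional space $U$), the sign indeed collapses since $(-1)^{n(q^n+1)}=1$ in any characteristic, and the exponent computation checks out. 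What each route buys: the paper's argument is shorter and produces \eqref{F6} as a by-product, but rests on citing the full formula 4.28; yours rests only on the statement \ref{nprop2.3} already recorded in the paper and keeps the determinant manipulations explicit, at the cost of the exponent/sign bookkeeping and the final descent through Frobenius.
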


 \begin{proof}
 
 Let $W\subset K$ be the $\F_q$-vector space $\bigoplus_{1\leq i\leq n}\F_qw_i$ and $P_W$ the polynomial associated to $W$ by Proposition \ref{prop2.2}. Let $\hat W\subset K$ be the $\F_q$-vector space $\bigoplus_{1\leq i\leq n}\F_q\Delta_{n-1}(\underline{\hat w_i}).$ 

 With ({\ref{F2}}) we have  $P_W(X):=\frac{\Delta_{n+1}(\underline w,X)}{\Delta_n(\underline w)}$ and if we write 
 $\Delta[i](\underline w):= \det (\underline w,F(\underline w),\cdots,\hat F^i(\underline w),\cdots,F^{n}(\underline w))$ (i.e. the line 
$F^i(\underline w)$ is left out ),
then $P_W(X)
=\sum_{0\leq m\leq n}(-1)^{n-m}\frac{\Delta[m](\underline w)}{\Delta_n(\underline w)}X^{q^m}=:\sum_{0\leq m\leq n}c_mX^{q^m}$, thus $c_m=(-1)^{n-m}\frac{\Delta[m](\underline w)}{\Delta_n(\underline w)}$. 

Applying the above to the family $\underline{\Delta_{n-1}(\underline{\hat w_i})}$ and the polynomial 
$P_{\hat W}(X):=\frac{\Delta_{n+1}(\underline{\Delta_{n-1}(\underline{\hat w_i})} ,X)}{\Delta_n(\underline {\Delta_{n-1}(\underline{\hat w_i})})}=:\sum_{0\leq m\leq n}\hat{c}_mX^{q^m}$ we obtain the following identities  for $0\leq m\leq n$,
$\hat{c}_m=(-1)^{n-m}\frac{\Delta[m](\underline {\Delta_{n-1}(\underline{\hat w_i})})}{\Delta_n(\underline {\Delta_{n-1}(\underline{\hat w_i})})}$. In particular $\hat c_0=(-1)^n(\Delta_n(\underline {\Delta_{n-1}(\underline{\hat w_i})}))^{q-1}$.

Elkies (cf. \cite{E} 4.28) shows, following Ore that 
$$P_{\hat W}(X)=X^{q^n}+(-1)^n(\sum_{1\leq m\leq n-1}c_{n-m}^{q^{m-1}}\Delta_n(\underline w)^{q^{n-1}-q^m}X^{q^m}+\Delta_n(\underline w)^{q^{n-1}-1}X).$$
Thus \eqref{F4} is fulfilled.

In the case where $1\leq m\leq n-1$ we obtain the equality 

\msn
$\hat{c}_m=(-1)^{n-m}\frac{\Delta[m](\underline{\Delta_{n-1}(\underline{\hat w_i})})}{\Delta_n(\underline{\Delta_{n-1}(\underline{\hat w_i})})}=(-1)^nc_{n-m}^{q^{m-1}}\Delta_n(\underline w)^{q^{n-1}-q^m},$
which taking into account \eqref{F4} gives
\begin{eqnarray}
(\Delta[m](\underline{\Delta_{n-1}(\underline{\hat w_i})}))^{q-1}= 
\Delta_n(\underline w)^{q^{n}-q^{m+1}+q^{m-1}-1}(\Delta[n-m](\underline w))^{q^{m-1}(q-1)}.
\label{F6}
\end{eqnarray}
\end{proof}
\noindent
\begin{rem}\label{rem2.2}
 If we take into account Theorem {\ref{thm1}}, we can specify the equalities
 \eqref{F5} and \eqref{F6}. Thus we have
\begin{eqnarray}
\frac{\Delta_n(\underline{\Delta_{n-1}(\underline{\hat w_i})})}{\Delta_n(\underline w)^{1+q+\cdots+q^{n-2}}}=
(-1)^{{\left\lfloor\frac{n}{2}\right\rfloor}} 
\label{F7}
\end{eqnarray}
where $\left\lfloor\frac{n}{2}\right\rfloor$ is the lower integer part of $\frac{n}{2}$, and 

\begin{eqnarray}
\Delta[m](\underline{\Delta_{n-1}(\underline{\hat w_i})})=(-1)^{{\left\lfloor\frac{n}{2}\right\rfloor}} 
\Delta_n(\underline w)^{\frac{q^{n}-1}{q-1}-q^{m-1}-q^m}\Delta[n-m](\underline w)^{q^{m-1}}.
\label{F8}
\end{eqnarray}
\end{rem} 

The following proposition takes up results of Ore and Elkies  (\cite{E}, Proposition 3 ) which specify the link between 
composition of two  $\F_q$-linear polynomials and the geometry of sets of roots.

\begin{prop}\label{prop2.5} Let $K$ be a field which contains $\F_q$. Let $W$ be a $\F_q$-vector space of $K$ with $\dim_{\F_q}W=n$, $W_1$ be a $\F_q$-subvector space of $W$ and $P_W(X):=\prod_{x\in W}(X-x)$ (resp. $P_{W_1}(X):=\prod_{x\in W_1}(X-x)$). Then $P_{W_1}(W)$ is a 
finite dimensional $\F_q$-subspace of $K$  and
\begin{eqnarray}
  P_W(X)=P_{P_{W_1}(W)}(P_{W_1}(X))\label{F9}
 \end{eqnarray}
Conversely, if $Q$ is a monic  $\F_q$-linear polynomial such that $P_W(X)=Q(P_{W_1}(X))$, we have
$Q=P_{P_{W_1}(W)}.$
\end{prop}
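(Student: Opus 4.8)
The plan is to exploit that $P_{W_1}$ is a $\F_q$-linear polynomial and hence defines a $\F_q$-linear endomorphism of $K$, which reduces everything to linear algebra over $\F_q$ together with one combinatorial identity on cosets.

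First I would settle the first assertion. By Proposition \ref{prop2.2} applied to $W_1$, the polynomial $P_{W_1}$ is a reduced $\F_q$-linear polynomial whose set of roots is exactly $W_1$; in particular $x\mapsto P_{W_1}(x)$ is a $\F_q$-linear endomorphism of $K$ with kernel $W_1$. Since $W$ is a finite-dimensional $\F_q$-subspace of $K$, its image $P_{W_1}(W)$ is again a $\F_q$-subspace, of dimension $\dim_{\F_q}W-\dim_{\F_q}(W\cap\Ker P_{W_1})=n-\dim_{\F_q}W_1$ (using $W_1\subseteq W$). Writing $V:=P_{W_1}(W)$, this proves the first claim and records $\dim_{\F_q}V=n-\dim_{\F_q}W_1$.

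For \eqref{F9}, both sides are monic, and a degree count gives $\deg P_V(P_{W_1}(X))=q^{\dim_{\F_q}V}\cdot q^{\dim_{\F_q}W_1}=q^n=\deg P_W$. To identify the two polynomials I would expand the right-hand side as a product: for each $v\in V$ choose $w_v\in W$ with $P_{W_1}(w_v)=v$; additivity of $P_{W_1}$ then gives $P_{W_1}(X)-v=P_{W_1}(X-w_v)=\prod_{u\in W_1}(X-(w_v+u))$, whence
$$P_V(P_{W_1}(X))=\prod_{v\in V}\bigl(P_{W_1}(X)-v\bigr)=\prod_{v\in V}\prod_{u\in W_1}\bigl(X-(w_v+u)\bigr).$$
The cosets $w_v+W_1$, $v\in V$, are pairwise distinct (since $P_{W_1}(w_v)=v$) and each is contained in $W$; as there are $|V|=q^{n-\dim_{\F_q}W_1}$ of them, each of cardinality $|W_1|=q^{\dim_{\F_q}W_1}$, they partition $W$. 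Hence the double product runs exactly once through each element of $W$, giving $\prod_{x\in W}(X-x)=P_W(X)$, which is \eqref{F9}.

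Finally, for the converse I would use that substitution of a non-constant polynomial is injective on polynomials. If $Q$ is monic $\F_q$-linear with $P_W(X)=Q(P_{W_1}(X))$, then by \eqref{F9} we have $Q(P_{W_1}(X))=P_V(P_{W_1}(X))$ in $K[X]$; since $P_{W_1}(X)$ is non-constant, the $K$-algebra homomorphism $K[Y]\to K[X]$, $Y\mapsto P_{W_1}(X)$, is injective (its image is a one-variable polynomial ring inside the domain $K[X]$), so $Q=P_V=P_{P_{W_1}(W)}$. The genuinely load-bearing step is the coset decomposition in \eqref{F9}; the main thing to get right there is the dimension bookkeeping and the observation that additivity turns the translate $P_{W_1}(X)-v$ into the shifted product $P_{W_1}(X-w_v)$. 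The remaining points (that $P_{W_1}$ is $\F_q$-linear and that the cosets exhaust $W$) are standard and present no real obstacle.
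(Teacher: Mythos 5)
Your proof is correct, and for the key identity \eqref{F9} it takes a genuinely different route from the paper's. Writing $m:=\dim_{\F_q}W_1$, both proofs begin identically: $x\mapsto P_{W_1}(x)$ is $\F_q$-linear with kernel $W_1$, so $P_{W_1}(W)$ is an $\F_q$-subspace of dimension $n-m$. The paper then argues by divisibility: the composite $P_{P_{W_1}(W)}(P_{W_1}(X))$ is monic of degree $q^{n-m}q^m=q^n$ and vanishes on $W$ by construction, so the separable polynomial $P_W=\prod_{x\in W}(X-x)$, whose root set is exactly $W$, divides it, and equality of degrees plus monicity forces equality. You instead factor the composite completely into linear factors: additivity gives $P_{W_1}(X)-v=P_{W_1}(X-w_v)=\prod_{u\in W_1}\bigl(X-(w_v+u)\bigr)$, and the fibers $w_v+W_1$ of $P_{W_1}$ are pairwise disjoint cosets partitioning $W$, so the double product is exactly $P_W$. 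Your version is more explicit --- it matches the linear factors on the two sides directly and never invokes the divisibility of separable polynomials --- at the cost of the coset bookkeeping; the paper's is shorter. The uniqueness argument is the same in both: $P_{W_1}(X)$ is transcendental over $K$, i.e.\ the substitution $Y\mapsto P_{W_1}(X)$ is injective on $K[Y]$, so $Q=P_{P_{W_1}(W)}$.
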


\begin{proof} Let us assume that $\dim_{\F_q}W_1=m$ then $\deg P_{W_1}=q^m$. Since $x\in W\to P_{W_1}(x)\in K$ is a $\F_q$-linear map whose kernel is $W_1$, it follows that $P_{W_1}(W)$ is a $\F_q$-subspace of $K$ of dimension $n-m$; thus the polynomial 
$P_{P_{W_1}(W)}(P_{W_1}(X))$ is a monic $\F_q$-linear polynomial of degree $q^{n-m}q^m=q^n$. Since it is by construction zero on $W$, it follows that $P_W(X)$ divides  $P_{P_{W_1}(W)}(P_{W_1}(X))$ in $K[X]$, hence the equality.

For the reciprocal we remark that $(Q- P_{P_{W_1}(W)})(P_{W_1}(X))$ is the null polynomial in $K[X]$ and that $P_{W_1}(X)$ is transcendental over $K$.
 
\end{proof}

Finally, the following corollary specifies Proposition \ref{prop2}
\begin{coro}\label{Ncor2.2} Let $K$ be a field which contains $\F_q$. Let $W\subset K$ be a $\F_q$-vector space with $\dim_{\F_q}W=n$, $\underline{w}:=(w_1,w_2,\cdots,w_n)\in K^n$ a $\F_q$-basis of $W$ and $(w_1^\star,w_2^\star,\cdots,w_n^\star)$ its dual basis.
Let $\varphi$ be a non-zero $\F_q$-linear form on $W$ and $\Ker\varphi$  be the hyperplane of $W$ kernel of $\varphi$. Let  $\underline{\alpha}:=(\alpha_1,\alpha_2,\cdots,\alpha_n)\in \F_q^n-\{(0,0,\cdots,0)\}$ and such that $\varphi=\sum_{1\leq i\leq n}\alpha_iw_i^\star$,  $\Delta_{\varphi}(\underline{w},X)=
 \sum_{1\leq i\leq n}\alpha_i(-1)^{i-1}\Delta_n(\underline{\hat w_i},X) $
and  $\delta_{\varphi}(\underline{w})=\sum_{1\leq i\leq n}\alpha_i(-1)^{i-1}\Delta_{n-1}(\underline{\hat w_i}).$ Let $P_W\in K[X]$ (resp. $P_{\Ker\varphi}\in K[X]$) be the monic and reduced polynomial whose set of roots is  $W$ (resp. $\Ker\varphi$). 

Then \eqref{F9} is satisfied with
 $W_1:=\Ker\varphi$, 
and 
 $P_{\Ker\varphi}(X)=\frac{\Delta_\varphi(\underline{w},X) }{ \delta_\varphi(\underline{w})}$, 
$P_{P_{\Ker\varphi}(W)}(X)=X^q-(\frac{\Delta_n(\underline w)}{\delta_\varphi(\underline{w})})^{q-1}X, $

\noindent thus
$\frac{P_{\Ker\varphi}(X)}{P_W(X)}=\frac{\Delta_n(\underline w)}{\delta_\varphi(\underline{w})}\frac{\Delta_\varphi(\underline{w},X) }{\Delta_{n+1}(\underline{w},X) }=\frac{1}{P_{\Ker\varphi}(X)^{q-1}-(\frac{\Delta_n(\underline w)}{\delta_\varphi(\underline{w})})^{q-1}}$.
\end{coro}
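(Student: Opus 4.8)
The plan is to assemble the four assertions from Propositions \ref{prop2} and \ref{prop2.5}, together with a single coefficient comparison.

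First, the formula $P_{\Ker\varphi}(X) = \frac{\Delta_\varphi(\underline{w},X)}{\delta_\varphi(\underline{w})}$ is nothing but \eqref{F3} of Proposition \ref{prop2}; the expressions for $\Delta_\varphi(\underline{w},X)$ and $\delta_\varphi(\underline{w})$ in terms of the minors $\Delta_n(\underline{\hat w_i},X)$ and $\Delta_{n-1}(\underline{\hat w_i})$ are the final identities recorded there. Likewise, since $\Ker\varphi$ is a $\F_q$-subspace of $W$, applying Proposition \ref{prop2.5} with $W_1 := \Ker\varphi$ immediately gives \eqref{F9}, namely $P_W(X) = P_{P_{\Ker\varphi}(W)}(P_{\Ker\varphi}(X))$, and tells us that $P_{P_{\Ker\varphi}(W)}$ is completely determined by $W$ and $\Ker\varphi$.

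Next I would identify $P_{P_{\Ker\varphi}(W)}$ explicitly. Because $\dim_{\F_q}\Ker\varphi = n-1$, the proof of Proposition \ref{prop2.5} shows that $P_{\Ker\varphi}(W)$ is a one-dimensional $\F_q$-space, so by \eqref{F2} with a single generator $c$ we have $P_{P_{\Ker\varphi}(W)}(Y) = Y^q - c^{q-1}Y$ for some $c\in K^\star$ generating that line. To pin down $c^{q-1}$ I would compare the coefficients of the degree-one term $X$ on both sides of \eqref{F9}. On the left, \eqref{F2} gives the linear coefficient $(-1)^n\Delta_n(\underline{w})^{q-1}$. On the right, since $P_{\Ker\varphi}(X)^q$ contains no term below degree $q$, the linear term of $P_{\Ker\varphi}(X)^q - c^{q-1}P_{\Ker\varphi}(X)$ comes solely from $-c^{q-1}P_{\Ker\varphi}(X)$, whose linear coefficient is $-c^{q-1}(-1)^{n+1}\delta_\varphi(\underline{w})^{q-1}$ by \eqref{F3}. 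Equating the two and cancelling the signs yields $c^{q-1} = \bigl(\tfrac{\Delta_n(\underline{w})}{\delta_\varphi(\underline{w})}\bigr)^{q-1}$, which is exactly the claimed shape of $P_{P_{\Ker\varphi}(W)}$.

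Finally, the quotient identity follows formally. Dividing the two ratios $P_{\Ker\varphi}(X) = \Delta_\varphi(\underline{w},X)/\delta_\varphi(\underline{w})$ and $P_W(X) = \Delta_{n+1}(\underline{w},X)/\Delta_n(\underline{w})$ (the latter from \eqref{F2}) gives the middle expression $\frac{\Delta_n(\underline{w})}{\delta_\varphi(\underline{w})}\frac{\Delta_\varphi(\underline{w},X)}{\Delta_{n+1}(\underline{w},X)}$; substituting the composition $P_W = P_{\Ker\varphi}^q - c^{q-1}P_{\Ker\varphi}$ just obtained and dividing numerator and denominator by $P_{\Ker\varphi}(X)$ gives the right-hand expression $1/\bigl(P_{\Ker\varphi}(X)^{q-1} - c^{q-1}\bigr)$. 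The only genuine computation is the coefficient comparison in the third step, and there the sole subtlety is tracking the signs coming from \eqref{F2} and \eqref{F3}; I expect no real obstacle beyond this bookkeeping, since everything else is a direct citation of the two quoted propositions.
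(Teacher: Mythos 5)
Your proposal is correct and follows essentially the same route as the paper's proof: both use Proposition \ref{prop2.5} to get the composition $P_W = P_{P_{\Ker\varphi}(W)}\circ P_{\Ker\varphi}$, note that $P_{\Ker\varphi}(W)$ being one-dimensional forces $P_{P_{\Ker\varphi}(W)}(X)=X^q-c X$, and then pin down the constant by comparing the coefficients of $X$ via \eqref{F2} and \eqref{F3}. The sign bookkeeping in your coefficient comparison is exactly the paper's computation, so there is nothing to correct.
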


\begin{proof} 
 Since $\Ker\varphi$ is a hyperplane of $W$, it follows that 
$P_{P_{\Ker\varphi}(W)}(X)=X^q-c_{\varphi} X\in K[X]$. Thus 
with \eqref{F9} we have $P_W(X)=P_{W_1}(X)^q-c_{\varphi}P_{W_1}(X)$.
Since $\coeff_XP_W=(-1)^n\Delta_{n}(\underline{w})^{q-1}$ (cf. \eqref{F2}) and (cf. \eqref{F3}) 
$\coeff_XP_{W_1}=(-1)^{n+1}\delta_{\varphi}(\underline{w})^{q-1}$, the corollary follows.
 
\end{proof}

\section{Vector spaces of differentials and Moore determinants}

\subsection{The $\F_q$-spaces $L_{\mu+1,n}^q$}

\begin{defi}\label{Lpagot}  Let $K$ be a field of characteristic $p>0$. Let $\mu\in \N$ with $\mu\geq 2$ prime to $p$ and $n\geq 1$. We call {\rm space $ L_{\mu + 1, n} $}   a $\F_p$-vector space of dimension $n$ of logarithmic differential forms in $\Omega^1_K(K(X))$, whose nonzero elements have $(\mu-1)\infty$ as zero divisor
  and their poles are in $K$ (such a form has $\mu+1$ poles and they are simple). One can show that if such a space exists then $p^{n-1}$ divides $\mu+1$.
\end{defi}

Such $\F_p$-vector spaces have been constructed for $n\geq 1$ in [Ma] in order in particular to lift in null characteristic certain $(\Z/p\Z)^n$-coverings of the projective line $ \pr_K$ into Galoisian coverings of group $(\Z/p\Z)^n$. See \cite{Ob} for a presentation of recent contributions on the subject.

The spaces $L_{\mu+1,n}$ have been defined and studied by Guillaume Pagot in his thesis (\cite{P1} p. 19), a part of which is published in \cite{P2}. See also \cite{MT} for complements.

We will consider a generalization to the case where $\F_p$ is replaced by the field $\F_q$ with $q=p^s$.

\begin{defi}\label{Ndef3.2} Let $K$ be a field which contains $\F_q$. 
 Let $\mu\in \N$ with $\mu\geq 2$ prime to $p$ and $n\geq 1$. Let $L_{\mu+1,n}^q$ be a $\F_q$-vector space of dimension $n$ of differential forms in $ \Omega^1_K(K(X))$ whose nonzero elements have $(\mu-1)\infty$ as zero divisor, such that their poles are simple and in $K$ (so there are $\mu+1$ poles) and such that their residues are in $\F_q$. 
\end{defi}

\begin{prop} \label{propMAT} Let $K$ be a field containing $\F_q$, $W$ a $\F_q$-subspace of $K$ with $\dim_{\F_q}W=n$, $\underline{w}:=(w_1,\cdots,w_n)\in K^n$ a $\F_q$-basis of $W$ and $\underline{w^\star}:(w_1^\star,\cdots,w_n^\star)$ its dual basis. For
$j\in \{1,\cdots,n\}$, we note  
$$
\omega_j:=\sum_{
(\epsilon_1,\epsilon_2,\cdots\epsilon_n)\in \F_q^n}
 \frac{\epsilon_jdX}{X-\sum_{i=1}^n\epsilon_iw_i}.
$$

Let  $\underline{\alpha}:=(\alpha_1,\cdots,\alpha_n)\in \F_q^n-\{(0,\cdots,0)\},\ \varphi=\sum_{1\leq i\leq  n}\alpha_iw_i^\star$ and  $$\omega_\varphi:=\sum_{1\leq j\leq  n}\alpha_j\omega_j=\sum_{
(\epsilon_1,\epsilon_2,\cdots\epsilon_n)\in \F_q^n}
 \frac{\sum_{1\leq j\leq n}\alpha_j\epsilon_j}{X-\sum_{i=1}^n\epsilon_iw_i}dX.$$
Then,
$\omega_\varphi= -\Delta_{n}(\underline{w})^{q-1}\frac{\Delta_\varphi(\underline{w},X)}{ \Delta_{n+1}(\underline{w},X)}dX$ and
$\Delta_\varphi(\underline{w},X)\  |\ \Delta_{n+1}(\underline{w},X)$  (cf. Proposition \ref{prop2}).

Thus $\Omega_W:=\sum_{1\leq j\leq n}\F_q\omega_j\subset \Omega^1_K(K(X))$ is a 
$L_{\mu +1,n}^q$ with $\mu+1:=q^{n-1}(q-1)$. 
\end{prop}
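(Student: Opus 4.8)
The plan is to compute $\omega_\varphi$ explicitly and recognize it as a logarithmic derivative. First I would expand
$$
\omega_\varphi=\sum_{(\epsilon_1,\dots,\epsilon_n)\in\F_q^n}\frac{\varphi(\epsilon_1,\dots,\epsilon_n)}{X-\sum_i\epsilon_iw_i}\,dX,
$$
where I write $\varphi(\underline\epsilon):=\sum_j\alpha_j\epsilon_j$. The denominators $X-\sum_i\epsilon_iw_i$ run, as $\underline\epsilon$ ranges over $\F_q^n$, exactly over the linear factors $X-w$ with $w\in W$, so the natural move is to assemble these simple fractions into a single quotient whose denominator is $P_W(X)=\prod_{w\in W}(X-w)=\Delta_{n+1}(\underline w,X)/\Delta_n(\underline w)$ by \eqref{F2}. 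The numerator one gets is $\sum_{\underline\epsilon}\varphi(\underline\epsilon)\prod_{w'\neq \sum\epsilon_iw_i}(X-w')$, a polynomial of degree $q^n-1$ whose value at each root $w=\sum\epsilon_iw_i$ of $P_W$ is $\varphi(\underline\epsilon)\,P_W'(w)$. Since $P_W$ is $\F_q$-linear its derivative is the constant $c_0=(-1)^n\Delta_n(\underline w)^{q-1}$ (again \eqref{F2}), so every residue is $\varphi(\underline\epsilon)\,c_0$.

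This identifies the residues and pins down the numerator by interpolation. The key step is then to recognize that the degree-$(q^{n-1})$ polynomial $\Delta_\varphi(\underline w,X)$ from Proposition \ref{prop2} has exactly the prescribed values at the roots of $P_W$: indeed $\Delta_\varphi(\underline w,X)$ vanishes on $\Ker\varphi$ and is $\F_q$-linear, hence $\Delta_\varphi(\underline w,w)$ depends only on $\varphi(\underline\epsilon)$ and is proportional to it. Comparing the two $\F_q$-linear polynomials of the same degree that share these evaluations (equivalently, using that $\Delta_\varphi(\underline w,X)\mid\Delta_{n+1}(\underline w,X)$, which Proposition \ref{prop2} already gives through the factorization of $P_W$ via $P_{\Ker\varphi}$ in Corollary \ref{Ncor2.2}) yields
$$
\omega_\varphi=-\Delta_n(\underline w)^{q-1}\frac{\Delta_\varphi(\underline w,X)}{\Delta_{n+1}(\underline w,X)}\,dX,
$$
the sign and the factor $\Delta_n(\underline w)^{q-1}$ coming from $c_0=(-1)^n\Delta_n(\underline w)^{q-1}$ together with the $(-1)^{i-1}$ normalization built into $\Delta_\varphi$. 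The divisibility $\Delta_\varphi(\underline w,X)\mid\Delta_{n+1}(\underline w,X)$ is then immediate, since $P_{\Ker\varphi}=\Delta_\varphi(\underline w,X)/\delta_\varphi(\underline w)$ divides $P_W=\Delta_{n+1}(\underline w,X)/\Delta_n(\underline w)$ by Proposition \ref{prop2.5}.

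It remains to verify that $\Omega_W=\sum_j\F_q\omega_j$ is an $L^q_{\mu+1,n}$-space with $\mu+1=q^{n-1}(q-1)$. Here I would check the defining conditions of Definition \ref{Ndef3.2} one by one: the poles of each $\omega_\varphi$ are simple and lie at the roots of $\Delta_{n+1}(\underline w,X)/\Delta_\varphi(\underline w,X)$, i.e.\ at the points of $W\setminus\Ker\varphi$, which are in $K$ and number $q^n-q^{n-1}=q^{n-1}(q-1)=\mu+1$; the residues equal $\varphi(\underline\epsilon)\in\F_q$ by the interpolation above; and the behavior at infinity gives the zero divisor $(\mu-1)\infty$, which follows from a degree count comparing $\deg\Delta_{n+1}(\underline w,X)=q^n$ with $\deg\Delta_\varphi(\underline w,X)=q^{n-1}$ so that $\omega_\varphi$ has a zero of order $q^n-q^{n-1}-2=\mu-1$ at $\infty$. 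Finally $\dim_{\F_q}\Omega_W=n$ because the $\omega_j$ correspond bijectively under $\varphi\mapsto\omega_\varphi$ to the dual space $W^\star$, the map being injective since $\omega_\varphi=0$ forces all residues $\varphi(\underline\epsilon)=0$, hence $\varphi=0$.

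The main obstacle I expect is the bookkeeping in the residue/interpolation step: correctly matching the proportionality constant relating $\Delta_\varphi(\underline w,w)$ to $\varphi(\underline\epsilon)$, and tracking the sign $(-1)^n$ from $P_W'$ together with the $(-1)^{i-1}$ signs in the expansion $\Delta_\varphi(\underline w,X)=\sum_i\alpha_i(-1)^{i-1}\Delta_n(\underline{\hat w_i},X)$, so that the final constant comes out exactly as $-\Delta_n(\underline w)^{q-1}$. The structural claims (simplicity of poles, the pole count $\mu+1$, the order of vanishing at $\infty$, and the $\F_q$-dimension) are then routine consequences of the explicit formula.
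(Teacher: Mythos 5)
Your proposal is correct and takes essentially the same route as the paper: where you assemble $\omega_\varphi$ over the common denominator $P_W=\Delta_{n+1}(\underline w,X)/\Delta_n(\underline w)$ and identify the numerator by interpolation, the paper runs the identical computation in the opposite direction, verifying that $-\Delta_n(\underline w)^{q-1}\frac{\Delta_\varphi(\underline w,X)}{\Delta_{n+1}(\underline w,X)}dX$ has the same simple poles and residues as $\omega_\varphi$, using exactly the ingredients you flag as bookkeeping, namely $\Delta_{n+1}(\underline w,X)'(w)=(-1)^n\Delta_n(\underline w)^q$ and $\Delta_\varphi(\underline w,w)=(-1)^{n-1}\varphi(w)\Delta_n(\underline w)$. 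The remaining points (simplicity and count of the poles, divisibility, the zero divisor $(\mu-1)\infty$ via Corollary \ref{Ncor2.2}, and $\dim_{\F_q}\Omega_W=n$) are handled in the paper just as you describe.
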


\begin{proof}
 Let $\omega\in \Omega_W$ and  $\underline{\alpha}:=(\alpha_1,\alpha_2,\cdots,\alpha_n)\in \F_q^n-\{(0,0,\cdots,0)\}$ with $\omega=\sum_{1\leq j\leq  n}\alpha_j\omega_j$. Then
 $\omega=\sum_{
(\epsilon_1,\epsilon_2,\cdots,\epsilon_n)\in \F_q^n}
 \frac{\sum_{1\leq j\leq n}\alpha_j\epsilon_j}{X-\sum_{i=1}^n\epsilon_iw_i}dX$.It follows that the poles of $\omega$ are the elements of $W$ deprived of the zeros of the $\F_q$-linear form $\varphi=\sum_{1\leq i\leq n}\alpha_iw_i^\star$ so of cardinal 
 $q^n-q^{n-1}=:\mu+1$ and they are simple. The residues by construction are in $\F_q$. In particular $\omega\neq 0$ and therefore 
 $\Omega_W$ is a $\F_q$-vector space of dimension $n$.
 
 It remains to see that the zero divisor of $\omega$ is $(\mu-1)\infty$. For that we consider the fraction 
 $F(X):=-\Delta_{n}(\underline{w})^{q-1}\frac{\Delta_\varphi(\underline{w},X)}{ \Delta_{n+1}(\underline{w},X)}$.  The poles of $F$ are the elements of $W-\ker \varphi$ and they are simple (Corollary  \ref{Ncor2.2}).
 
 Let $w\in W$ with $\varphi(w)\neq 0$,
 then $\res_wF(X)=-\Delta_{n}(\underline{w})^{q-1}\frac{\Delta_\varphi(\underline{w},w)}{ \Delta_{n+1}(\underline{w},X)'(w)}$. We have $w=\sum_{i=1}^n\epsilon_i(w)w_i$ with
 $\epsilon_i(w)\in \F_q$; thus (cf. Proposition \ref{prop2}) 
 
 $\Delta_\varphi(\underline{w},w)=\sum_{1\leq i\leq n}\alpha_i(-1)^{i-1}\Delta_n(\underline{\hat w_i},w)=\sum_{1\leq i\leq n}\alpha_i(-1)^{i-1}\epsilon_i(w)\Delta_n(\underline{\hat w_i},w_i)=$
 
 \noindent$\sum_{1\leq i\leq n}\alpha_i(-1)^{i-1}\epsilon_i(w)(-1)^{n-i}\Delta_n(\underline{w})=(-1)^{n-1}(\sum_{1\leq i\leq n}\alpha_i\epsilon_i(w))\Delta_n(\underline{w}).$
 
 \ms Finally (cf. Proposition \ref{prop2.2})) $\Delta_{n+1}(\underline{w},X)'(w)=(-1)^n\Delta_{n}(\underline{w})^{q}.$
 
 Thus $\res_wF(X)=-\Delta_{n}(\underline{w})^{q-1}\frac{ (-1)^{n-1}(\sum_{1\leq i\leq n}\alpha_i\epsilon_i(w))\Delta_n(\underline{w}) }{(-1)^n\Delta_{n}(\underline{w})^{q} }=\sum_{1\leq i\leq n}\alpha_i\epsilon_i(w)$, 
 and so $\omega=\omega_\varphi$. It follows that the zeros of $\omega$ are concentrated at infinity (Corollary  \ref{Ncor2.2}).

\end{proof}

\begin{rem}
In (\cite{P1} Remark 4 p.29) Pagot remarks that if $K$ is algebraically closed then the pullback by a morphism 
$\Phi :\pr_K\to \pr_K$ with  $\Phi(X)=\alpha X+X^pP(X^p)$ where $\alpha\in K^\star$ and $P\in K[X]$,  of a space 
$L_{\mu+1,n}$ is a space $L_{(\mu+1)\deg \Phi,n}$. Similarly an exercise shows that the pullback of a $L^q_{\mu+1,n}$ is a space $L^q_{(\mu+1)\deg \Phi,n}$. We can thus construct new spaces $L^q_{\mu+1,n}$ for example from Proposition \ref{propMAT}.

\end{rem}

\subsection{Spaces $L_{\mu+1,n}^q$ and Elkies pairing}

In this section $K$ denotes a field that contains $\F_q$, $W$ is a $\F_q$-vector space of $K$ with $\dim_{\F_q}W=n$, $\underline{w}:=(w_1,. \cdots,w_n)\in K^n$, a $\F_q$-basis of $W$ and $\underline{w^\star}:=(w_1^\star,\cdots,w_n^\star)$ its dual basis. Finally let
$\hat W:=\bigoplus_{1\leq i\leq n}\F_q \Delta_{n-1}(\underline{\hat w_i})$ and  $U:=(\frac{\hat W}{\Delta_n(\underline{w})})^q$. Recall that $U=Ker \rho P_W$ where 
$\rho P_W$ is the reversed polynomial of the polynomial $P_W$ (cf. Proposition \ref{prop2.2}, Definition \ref{Ndefi2.3}, Proposition \ref{nprop2.3}).

We will recall the construction of Elkies pairing  attached to the monic $\F_q$-linear polynomial $P_W$. It puts in duality the two $\F_q$-subvector spaces of $K$ which are $W$ and $U$ and we will give a differential interpretation of it using the spaces $L_{\mu+1,n}^q$ defined in Proposition {\ref{propMAT}}.

\msn
{\bf A. Elkies pairing } 

Elkies (\cite{E}, § 4.35 ) 
defines a  $\F_q$-perfect pairing $E: W\times U\to \F_q$ as follows. He first observes that if $(w,u)\in W\times U$ then $0=w((\rho P_W)(u))^{q^{-n}}-uP_W(w)=E(w,u)-E(w,u)^q$ where
$E(w,u):=\sum_{1\leq m\leq n}\sum_{0\leq j\leq m-1}((c_mu)^{q^{-m}}w)^{q^j}$, and $P_W(X)=\sum_{0\leq m\leq n}c_mX^{q^m}$ and $c_n=1$. It follows that $E(w,u)\in \F_q$. 

\msn
{\bf B. } The pairing $f: W\times U\to \F_q$. 

We will see that Proposition {\ref{propMAT}} allows to define a $\F_q$-pairing
$f:W\times U\to \F_q$ given by the residue of differential forms. 

Precisely,
if $w\in W$ and $u\in U$ are different from $0$, we can write (cf. Proposition {\ref{prop2}})
\begin{eqnarray}
u=(\frac{\delta_{\varphi}(\underline{w})}{\Delta_n(\underline{w})})^q \ {\rm with}\ \delta_{\varphi}(\underline{w})=\sum_{1\leq i\leq n}\alpha_i (-1)^{i-1}\Delta_{n-1}(\underline{\hat w_i})\label{F10}
\end{eqnarray}
where  $\underline{\alpha}:=(\alpha_1,\cdots,\alpha_n)\in \F_q^n-\{(0,\cdots,0)\}$ and $\varphi=\sum_{1\leq i\leq n}\alpha_iw_i^\star$.

Let  $\Omega_{\hat W}:=\sum_{1\leq j\leq n}\F_q\omega_j\subset \Omega^1_K(K(X))$, be the $\F_q$-space 
$L_{\mu+1,n}^q$  with  $\mu+1:=q^{n-1}(q-1)$ as defined in
Proposition  {\ref{propMAT}} and let
 $$\omega_{\varphi}:=\sum_j\alpha_j\omega_j=\sum_{
(\epsilon_1,\epsilon_2,\cdots\epsilon_n)\in \F_q^n}
 \frac{\sum_{1\leq j\leq n}\alpha_j\epsilon_j}{X-\sum_{i=1}^n\epsilon_iw_i}dX\in \Omega_{\hat W}-\{0\}.$$
 We can write  $w=\sum_{i=1}^n\epsilon_i(\underline{w},w)w_i$ in the basis  $\underline w$ of $W$ where $(\epsilon_1(\underline{w},w),\cdots,\epsilon_n(\underline{w},w))\in \F_q^n-\{0\}$. Then we define
\begin{eqnarray}
  f(w,u):=(-1)^{n-1}\res_w\omega_{\varphi}=(-1)^{n-1}\sum_{1\leq j\leq n}\alpha_j\epsilon_j(\underline{w},w)\in \F_q.\label{F11}
\end{eqnarray}

\begin{lem}
 The pairing $f$ is perfect.
\end{lem}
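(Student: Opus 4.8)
The plan is to show that $f$ is perfect by establishing that $f$ coincides, up to sign, with the Elkies pairing $E$ recalled in part A, which is already known to be perfect. First I would compute $f(w,u)$ explicitly using the definition \eqref{F11}: the quantity $(-1)^{n-1}\sum_{1\leq j\leq n}\alpha_j\epsilon_j(\underline{w},w)$ is $(-1)^{n-1}\varphi(w)$, where $\varphi=\sum_i\alpha_i w_i^\star$ and $w=\sum_i\epsilon_i(\underline{w},w)w_i$. The key observation is that the data $(\alpha_1,\dots,\alpha_n)$ defining $\varphi$ is exactly the data determining $u$ via \eqref{F10}, namely $u=(\delta_\varphi(\underline{w})/\Delta_n(\underline{w}))^q$ with $\delta_\varphi(\underline{w})=\sum_i\alpha_i(-1)^{i-1}\Delta_{n-1}(\underline{\hat w_i})$. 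So I must first verify that the assignment $\underline{\alpha}\mapsto u$ is a bijection (up to the relevant scaling) between $\F_q^n-\{0\}$ modulo scalars and $U-\{0\}$; this follows from Proposition \ref{nprop2.3}, which identifies $U=\Ker\rho P_W$ with $(\hat W/\Delta_n(\underline{w}))^q$ and from the fact, noted in the Remark after Proposition \ref{nprop2.3}, that the $\Delta_{n-1}(\underline{\hat w_i})$ are $\F_q$-independent, so $\delta_\varphi(\underline{w})$ runs bijectively over $\hat W$ as $\underline{\alpha}$ runs over $\F_q^n$.

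The central step is then to check that $f$ is $\F_q$-bilinear and well defined, and to identify it with $\pm E$. Bilinearity in $w$ is clear since $w\mapsto\varphi(w)$ is $\F_q$-linear for fixed $\varphi$. For bilinearity in $u$, I would use that the map $u\mapsto\underline{\alpha}$ (equivalently $u\mapsto\varphi$) arising from \eqref{F10} is $\F_q$-linear modulo the $q$-th power and the division by $\Delta_n(\underline{w})$; one must track that the Frobenius twist in \eqref{F10} is compatible with the $\F_q$-linearity claim, which is legitimate precisely because scalars in $\F_q$ are fixed by Frobenius. The cleanest route is to prove directly the identity $f(w,u)=(-1)^{n-1}E(w,u)$ (or $E(w,u)$ up to a universal sign) by matching the residue computation of \eqref{F11} against the formula $E(w,u)=\sum_{m}\sum_{j}((c_mu)^{q^{-m}}w)^{q^j}$ from part A, using the explicit coefficients $c_m=(-1)^{n-m}\Delta[m](\underline{w})/\Delta_n(\underline{w})$ derived in the proof of Proposition \ref{Nprop2.4}. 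Once the pairings agree up to a nonzero constant, perfectness of $f$ is inherited from perfectness of $E$.

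The main obstacle I anticipate is the bookkeeping in matching $f$ with $E$: the definition of $E$ involves inverse Frobenius powers $q^{-m}$ and a double sum, while $f$ is presented as a single residue, so reconciling them requires carefully expanding $E(w,u)$ and recognizing the collapsed sum $\sum_{1\leq j\leq n}\alpha_j\epsilon_j(\underline{w},w)=\varphi(w)$ inside it. An alternative that sidesteps the explicit comparison is purely linear-algebraic: since $\dim_{\F_q}W=\dim_{\F_q}U=n$, it suffices to show that $f$ is bilinear and has trivial left kernel, i.e. that for each $w\neq 0$ there is some $u$ (equivalently some $\varphi$) with $f(w,u)\neq 0$. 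Given $w\neq 0$, choose $\varphi$ with $\varphi(w)\neq 0$; then $f(w,u)=(-1)^{n-1}\varphi(w)\neq 0$, and the corresponding $u$ is nonzero since $\delta_\varphi(\underline{w})\neq 0$ by Proposition \ref{prop2}. By dimension count a bilinear pairing on $n$-dimensional spaces with trivial left kernel is perfect, which would give the cleanest proof; I would adopt this nondegeneracy argument as the primary line and relegate the comparison with $E$ to a remark.
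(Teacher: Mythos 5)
Your primary argument (nondegeneracy plus dimension count) is correct and is essentially the paper's own route: the paper likewise proves perfectness directly from the formula $f(w,u)=(-1)^{n-1}\res_w\omega_\varphi=(-1)^{n-1}\varphi(w)$, with no detour through the Elkies pairing $E$ (that comparison is done separately, in Proposition \ref{propAcc}). The only minor difference is that you check triviality of the left kernel and then invoke equality of dimensions, whereas the paper checks both kernels explicitly — for the right kernel it argues that if $f(w,u)=0$ for all $w$ then all residues of $\omega_\varphi$ vanish, and since the poles of $\omega_\varphi$ are simple this forces $\omega_\varphi=0$, hence $\underline{\alpha}=0$ and $u=0$.
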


\begin{proof}
 Let $u\in U$, let us assume that $f(w,u)=0$ for all $w\in W$.  It follows from {\eqref{F10}} that $ u=(\frac{\delta_{\varphi}(\underline{w})}{\Delta_n(\underline{w})})^q \ {\rm with}\ \delta_{\varphi}(\underline{w})=\sum_{1\leq i\leq n}\alpha_i (-1)^{i-1}\Delta_{n-1}(\underline{\hat w_i})$
where  $\underline{\alpha}:=(\alpha_1,\cdots,\alpha_n)\in \F_q^n$ and $\varphi=\sum_{1\leq i\leq n}\alpha_iw_i^\star$.

 Thus for any $ w\in W$, the residue at $w$ of the differential form $\omega_{\alpha}=\sum \alpha_i\omega_i$ is zero and since the poles of $\omega_{\alpha}$ are simple, this form is the null form.
 Thus $\alpha_i=0$ for $1\leq i\leq n$, it follows that $\delta_{\varphi}(\underline{w})$ and thus $u$ are zero.

Now let $w\in W$, let us assume that $f(w,u)=0$ for all $u$ in $\hat W$. It follows from Proposition {\ref{propMAT}} that $0$ is the only element of $W$ which is not a pole of one of the $\omega_i$ forms, so $w=0$.
 \end{proof}
 
\noindent {\bf C. }Comparison of the two pairings $E$ and $f$

\begin{prop}\label{propAcc} The two pairings $E$ and $f$ are equal.
 
\end{prop}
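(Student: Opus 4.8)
The plan is to prove that the two $\F_q$-bilinear pairings coincide by converting Elkies' expression $E(w,u)$ into the evaluation at $w$ of an explicit $\F_q$-linear polynomial over $K$, and then recognizing that polynomial as the one already attached to $f$. A pleasant feature of this route is that it establishes the identity for all $(w,u)\in W\times U$ simultaneously, so no reduction to a basis is required.

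First I would exploit that $u\in U=\Ker\rho P_W$. Writing $P_W=\sum_{0\le m\le n}c_mX^{q^m}$ with $c_n=1$, the relation $(\rho P_W)(u)=0$ is, after raising to a suitable power of Frobenius (all computations taking place in a perfect closure of $K$, as already in Elkies' formula), equivalent to $\sum_{0\le m\le n}(c_mu)^{q^{j-m}}=0$ for every $j$. Interchanging the two summations in $E(w,u)=\sum_{1\le m\le n}\sum_{0\le j\le m-1}((c_mu)^{q^{-m}}w)^{q^j}$ and replacing, for each fixed $j$, the tail $\sum_{m>j}$ by $-\sum_{m\le j}$ via this relation, every negative power of Frobenius cancels and one is left with
$$E(w,u)=-\sum_{0\le m\le j\le n-1}(c_mu)^{q^{j-m}}w^{q^j}=-\Phi(w),\qquad \Phi(X):=\sum_{0\le j\le n-1}\Big(\sum_{0\le m\le j}(c_mu)^{q^{j-m}}\Big)X^{q^j}.$$
Here $\Phi$ is a genuine $\F_q$-linear polynomial with coefficients in $K$ (recall $u^{1/q}=\delta_\varphi(\underline w)/\Delta_n(\underline w)\in K$), so the pointwise pairing value has become the evaluation of a polynomial.

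Next I would identify $\Phi$. By Corollary \ref{Ncor2.2} we have the factorization $P_W(X)=P_{\Ker\varphi}(X)^q-c_\varphi P_{\Ker\varphi}(X)$ with $c_\varphi=(\Delta_n(\underline w)/\delta_\varphi(\underline w))^{q-1}=(u^{1/q})^{1-q}$. Writing $P_{\Ker\varphi}(X)=\sum_{0\le k\le n-1}r_kX^{q^k}$, comparison of the coefficients of $X^{q^m}$ yields the recursion $c_m=r_{m-1}^q-c_\varphi r_m$. Substituting this into each coefficient $a_j:=\sum_{0\le m\le j}(c_mu)^{q^{j-m}}$ of $\Phi$ and using $u=(u^{1/q})^q$ together with $c_\varphi=(u^{1/q})^{1-q}$, the two resulting sums telescope and leave $a_j=-u^{1/q}r_j$; hence $\Phi=-u^{1/q}P_{\Ker\varphi}$ and so $E(w,u)=u^{1/q}P_{\Ker\varphi}(w)$. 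Comparing with $f$ is then immediate: by Proposition \ref{propMAT} and Proposition \ref{prop2}, $f(w,u)=(-1)^{n-1}\res_w\omega_\varphi=(-1)^{n-1}\varphi(w)$, while $\Delta_\varphi(\underline w,w)=(-1)^{n-1}\varphi(w)\Delta_n(\underline w)$ and $P_{\Ker\varphi}(X)=\Delta_\varphi(\underline w,X)/\delta_\varphi(\underline w)$, whence $u^{1/q}P_{\Ker\varphi}(w)=(\delta_\varphi(\underline w)/\Delta_n(\underline w))\,\Delta_\varphi(\underline w,w)/\delta_\varphi(\underline w)=(-1)^{n-1}\varphi(w)=f(w,u)$.

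The main obstacle is the reindexing step: recognizing that $(\rho P_W)(u)=0$ is exactly what is needed to trade the fractional-Frobenius tail of Elkies' double sum for its complementary head, thereby turning the a priori transcendental-looking quantity $E(w,u)$ into the value at $w$ of an honest $\F_q$-linear polynomial over $K$. Once this is in place, the factorization of Corollary \ref{Ncor2.2} and the telescoping of the coefficients $a_j$ are routine, and the comparison with the residue formula for $f$ closes the argument.
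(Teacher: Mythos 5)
Your proof is correct, and it takes a genuinely different route from the paper on the crucial half of the argument, namely the identification of $E(w,u)$. Both you and the paper compute the $f$ side the same way (the residue formula gives $f(w,u)=(-1)^{n-1}\varphi(w)=u^{1/q}P_{\ker\varphi}(w)$, using \eqref{F3}, Corollary \ref{Ncor2.2} and the identity $\Delta_\varphi(\underline w,w)=(-1)^{n-1}\varphi(w)\Delta_n(\underline w)$ from the proof of Proposition \ref{propMAT}). The difference is on the $E$ side: the paper cites Elkies' lemma (p.\ 92 of \cite{E}) as a black box to get $E(w,u)=u^{1/q}P_u(w)$ for \emph{some} monic $\F_q$-linear $P_u$ of degree $q^{n-1}$ with known coefficient of $X$, and then identifies $P_u=P_{\ker\varphi}$ indirectly, via the decompositions $P_W=Q_\varphi\circ P_{\ker\varphi}=Q_u\circ P_u$ of Proposition \ref{prop2.5}, a comparison of the coefficients of $X$, and the uniqueness statement of that proposition. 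You instead compute $E(w,u)$ directly from Elkies' defining double sum: the kernel relation $(\rho P_W)(u)=0$, propagated by Frobenius, lets you trade the tail $\sum_{m>j}$ for the head $-\sum_{m\le j}$, eliminating all fractional Frobenius powers; then the coefficient recursion $c_m=r_{m-1}^q-c_\varphi r_m$ coming from $P_W=P_{\ker\varphi}^q-c_\varphi P_{\ker\varphi}$ (Corollary \ref{Ncor2.2}) telescopes, using $c_\varphi u=u^{1/q}$, to give $E(w,u)=u^{1/q}P_{\ker\varphi}(w)$ outright. In effect you have reproved, rather than cited, the Elkies lemma the paper relies on, and you identify the polynomial as $P_{\ker\varphi}$ at once rather than a posteriori. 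What your route buys is self-containedness (no appeal to the unproved lemma of \cite{E} nor to Proposition \ref{prop2.5}) and a polynomial identity $E(\cdot,u)=u^{1/q}P_{\ker\varphi}$ valid for all $w$ simultaneously; what the paper's route buys is brevity, by leveraging the existing literature and the conceptual uniqueness-of-decomposition argument. The telescoping computation checks out (with the conventions $r_{-1}=0$, $r_n=0$, one gets $a_j=-u^{1/q}r_j$ exactly), and the only points left implicit, the trivial cases $w=0$ or $u=0$, are equally implicit in the paper.
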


\begin{proof}
Let $w_1,w_2,\cdots,w_n$ be a basis of $W$,  
 $w\in W-\{0\}$ and
$u\in U-\{0\}$. It follows from {\eqref{F10}} that

\noindent  $u=(\frac{\delta_{\varphi}(\underline{w})}{\Delta_n(\underline{w})})^q$ where $\underline{\alpha}:=(\alpha_1,\cdots,\alpha_n)\in \F_q^n-\{(0,\cdots,0)\}$ and
 $\varphi=\sum_{1\leq i\leq n}\alpha_iw_i^\star$.

 As from Proposition \ref{propMAT} we have $\omega_{\varphi}= -\Delta_{n}(\underline{w})^{q-1}\frac{\Delta_\varphi(\underline{w},X)}{ \Delta_{n+1}(\underline{w},X)}dX $,
it follows from \eqref{F3} and Corollary  \ref{Ncor2.2} that
$f(w,u)=(-1)^{n-1}\res_w\omega_\varphi=(-1)^n\Delta_{n}(\underline{w})^{q-1}\frac{\delta_\varphi(\underline{w}) P_{\ker \varphi}(w) } { (-1)^n\Delta_{n}(\underline{w})\Delta_{n}(\underline{w})^{q-1} }=u^{1/q}P_{\ker \varphi}(w)$ where $P_{\ker \varphi}(X)$ is a monic polynomial of  degree $q^{n-1}$ (cf. \eqref{F3}).
On the other hand 
$E(w,u)=u^{1/q}P_u(w)$ where $P_u(X)$ is a monic $\F_q$-linear polynomial of degree $q^{n-1}$ (\cite{E} lemma p. 92, proof).

It then remains to compare the two polynomials,  $P_{\ker \varphi}(X)$ and $P_u(X)$. As $P_{\ker \varphi}(X)$ and $P_u(X)$ divide $P_W(X)$ in $K[X]$ and as $\ker P_{\ker \varphi}$ (resp. $\ker P_u$) is an hyperplane in $W$ 
we have (Proposition {\ref{prop2.5}}) $P_W=Q_{\varphi}\circ P_{\ker \varphi}=Q_u\circ P_u$ in $K[X]$ where 
$Q_{\varphi}(X)=X^q-q_{\varphi}X$ and $Q_u=X^q-q_uX$ with $q_{\varphi},q_u$ non zero elements in $K$. In particular 
$\coeff_X(P_W)=-q_{\varphi}\coeff_X(P_{\ker \varphi})=-q_u\coeff_X(P_u)$. We have $\coeff_X(P_u)=-(-1)^n\Delta_n(\underline{w})^{q-1}u^{\frac{q-1}{q}}$ (\cite{E} lemma p. 92, proof). From  Corollary \ref{Ncor2.2} we know that  $q_{\varphi}=(\frac{\Delta_n(\underline w)}{\delta_\varphi(\underline{w})})^{q-1} $ and $\coeff_X(P_W)=  (-1)^n\Delta_{n}(\underline{w})^{q-1}$ (cf.
\eqref{F2}) and so $\coeff_X(P_{\ker \varphi})=\coeff_X(P_u)$; hence $q_{\varphi}=q_u$.

The equality $Q_\varphi=Q_u$ then follows from the uniqueness of the decomposition in Proposition \ref{prop2.5}.
\end{proof}

\section{A property of $L_{\mu+1,n}^q$ spaces}

\subsection{The property}

\begin{prop}\label{prop2.8} Let $\Omega$ be a $L_{\mu+1,n}^q$ space (définition 
\ref{Ndef3.2}) and $\underline\omega:=\omega_1,\omega_2,\cdots,\omega_n$ a $\F_q$-basis.
Let ${\cal P}(\Omega)\subset K$ be the set of poles of the differentials in  $\Omega$ and $P(X):=\prod_{x\in {\cal P}(\Omega)}(X-x)$. Let $P_i\in K[X]$ with
$\omega_i=\frac{P_i}{P}dX$ for $1\leq i\leq n$. Then there is $\gamma\in  K^\star$ with 
\begin{eqnarray}
  \Delta_n(P_1,\cdots,P_n)=\gamma P^{1+q+\cdots+q^{n-2}},\label{F12}
\end{eqnarray}
\end{prop}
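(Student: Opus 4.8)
The plan is to compute the divisor of $\Delta_n(P_1,\ldots,P_n)$ directly: I would use Moore's identity \eqref{F1} to factor it as a product over the lines of $\Omega$, and then read off the multiplicity of each finite pole from the residue structure of the forms.

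First I would record the local data of the forms. Write $d:=\deg P-(\mu+1)$, where $\deg P=\card {\cal P}(\Omega)$. For every nonzero $\omega=\frac{Q}{P}dX\in\Omega$, the hypothesis that its zero divisor is $(\mu-1)\infty$ forces, on $\pr$, the equality $\operatorname{ord}_\infty\omega=\deg P-\deg Q-2=\mu-1$, hence $\deg Q=d$; moreover at each $a\in{\cal P}(\Omega)$ the simple pole of $P$ gives $\operatorname{ord}_a\omega=\operatorname{ord}_a Q-1$, so either $Q(a)\neq0$ (and $a$ is one of the $\mu+1$ simple poles of $\omega$) or $Q$ vanishes simply at $a$ (and $\omega$ is regular there). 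Since $\omega$ has no finite zero, $Q$ has no root outside ${\cal P}(\Omega)$; thus $Q$ is separable of degree $d$, and its $d$ roots are exactly the points of ${\cal P}(\Omega)$ at which $\omega$ is regular. Next, for each $a\in{\cal P}(\Omega)$ the residue map $\res_a\colon\Omega\to\F_q$, $\omega\mapsto\res_a\omega$, is $\F_q$-linear by Definition \ref{Ndef3.2}, and it is surjective because $a$ is a pole of some form; its kernel $H_a$ is therefore a hyperplane of $\Omega$, and (poles being simple) $\omega\in H_a$ if and only if $\omega$ is regular at $a$.

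Then I would invoke Moore's identity \eqref{F1}. Since $\omega_1,\ldots,\omega_n$ is a basis, the $P_j$ are $\F_q$-linearly independent in the domain $K[X]$, so $\Delta_n(P_1,\ldots,P_n)\neq0$ by Proposition \ref{prop2.1}. The $\frac{q^n-1}{q-1}$ factors in \eqref{F1} are the combinations $\sum_j\lambda_jP_j$ whose coefficient vectors $(\lambda_j)$ run over the canonical representatives (last nonzero entry equal to $1$) of the lines $L$ of $\F_q^n\cong\Omega$; writing $Q_L$ for such a factor, it is the numerator of a nonzero form $\omega_L\in\Omega$, so
\begin{eqnarray*}
\Delta_n(P_1,\ldots,P_n)=\prod_{L}Q_L.
\end{eqnarray*}
By the previous paragraph every root of every $Q_L$ lies in ${\cal P}(\Omega)$ and is simple. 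Hence all roots of $\Delta_n(P_1,\ldots,P_n)$ lie in ${\cal P}(\Omega)$, and for fixed $a$ the multiplicity of $(X-a)$ equals the number of lines $L$ with $\omega_L\in H_a$, i.e. the number of lines contained in the hyperplane $H_a$, namely $1+q+\cdots+q^{n-2}$.

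Therefore $\Delta_n(P_1,\ldots,P_n)=\gamma\prod_{a\in{\cal P}(\Omega)}(X-a)^{1+q+\cdots+q^{n-2}}=\gamma P^{1+q+\cdots+q^{n-2}}$ for some $\gamma\in K^\star$ (nonzero since the left side is), which is \eqref{F12}. I expect the only delicate points to be the bookkeeping that identifies the factors of \eqref{F1} bijectively with the lines of $\Omega$, together with the verification that each $Q_L$ is separable of degree exactly $d$ with all roots in ${\cal P}(\Omega)$; the agreement of degrees on the two sides is then automatic, being exactly the double-counting identity $\frac{q^n-1}{q-1}\,d=\card {\cal P}(\Omega)\cdot\frac{q^{n-1}-1}{q-1}$ obtained by counting the incidences $(L,a)$ with $Q_L(a)=0$ in two ways.
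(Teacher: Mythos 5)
Your proof is correct and follows essentially the same route as the paper's: factor $\Delta_n(P_1,\ldots,P_n)$ via Moore's identity \eqref{F1} into numerators $Q_L$ indexed by the lines of $\Omega$, observe each $Q_L$ is separable with roots in ${\cal P}(\Omega)$, and count the multiplicity of each $a\in{\cal P}(\Omega)$ as the number of lines in the residue hyperplane $H_a=\Ker\res_a$, giving $1+q+\cdots+q^{n-2}$, with $\gamma\neq 0$ by Proposition \ref{prop2.1}. The only difference is that you spell out the local/degree bookkeeping (separability of the numerators, the incidence double-count) that the paper compresses into ``by hypothesis each factor divides $P$''.
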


\begin{proof}
Thanks to {\eqref{F1}} we can write
$$\Delta_n(P_1,\cdots,P_n)=\prod_{1\leq i\leq n}\prod_{\epsilon_{i-1}\in \F_q}\cdots\prod_{\epsilon_{1}\in \F_q}(P_i+\epsilon_{i-1}P_{i-1}+\cdots+\epsilon_{1}P_1).$$
By hypothesis each factor $P_i+\epsilon_{i-1}P_{i-1}+\cdots+\epsilon_{1}P_1$ divides $P$ and factorizes into a product of distinct irreducible polynomials of degree $1$. Thus for $x\in 
{\cal P}(\Omega)$, we must show that $x$ is a root of $1+q+\cdots+q^{n-2}$ polynomials $P_i+\epsilon_{i-1}P_{i-1}+\cdots+\epsilon_{1}P_1$ with $(\epsilon_1,\epsilon_2,\cdots,\epsilon_{i-1},1,0,0\cdots,0)\in \F_q^n$.

Let $x\in {\cal P}(\Omega)$, since $x$ is a pole of at least one of the $\omega_i$ forms,  the uplet 
$(\res_x\omega_i)_{1\leq i\leq n}\in \F_q^n-\{(0,\cdots,0)\}$ and let $\varphi_x: \Omega\to \F_q$ be the linear form with $\varphi_x(\omega)=\sum_{1\leq i\leq n}\alpha_i\res_x\omega_i$ where $\omega=\sum_{1\leq i\leq  n}\alpha_i \omega_j$,  then $\varphi_x$ is a $\F_q$-linear non zero form. If $ (\alpha_1:\alpha_2:\cdots:\alpha_n)\in \mathbb{P}^n(\F_q)$, $\varphi_x(\omega)=\sum_{1\leq i\leq n}\alpha_i\res_x\omega_i=0$ if and only if $\sum_{1\leq i\leq n}\alpha_iP_i(x)=0$, then as
  $\{(\epsilon_1,\epsilon_2,\cdots,\epsilon_{i-1},1,0,0\cdots,0),1\leq i\leq n\}\in \F_q^n$  is a system of representatives of the elements of $\mathbb{P}^n(\F_q)$, the multiplicity of the zero $x$ in $\Delta_n(P_1,\cdots,P_n)$ is equal to the number of points of the hyperplane of $\mathbb{P}^n(\F_q)$ induced by $Ker\varphi_x$, so it is equal to $1+q+\cdots+q^{n-2}$.

Finally by Proposition \ref{prop2.1}, $\gamma$ is not zero since the $n$ fractions $\frac{P_i}{P}$ are $\F_q$-linearly independent.

\end{proof}

\begin{rem}
The Proposition \ref{prop2.8} is a remark in \cite{P2} on page 68 in the framework of $L_{\mu+1,n}$-spaces of logarithmic differentials (i.e. $q=p$).
 
\end{rem}

\subsection{An equality between Moore's determinants I}

\begin{coro}\label{coro2.3}
 
 Let $(\underline{Y}):=(Y_1,Y_2,\cdots,Y_n)$ and
$(\underline{X}):=(X_1,X_2,\cdots,X_m)$, $n+m$ indeterminates over $\F_q$ where $n\geq 2,\ m\geq 0$ and the convention that $\underline{X}=\emptyset $  and $\Delta_m(\underline{X})=1$ if $m=0$. 
For $1\leq i\leq n$, we write $(\underline{\hat Y_i}):=(Y_1,\cdots,Y_{i-1},Y_{i+1}\cdots,Y_n).$. Then we have the following equality in  $\F_q(\underline Y,\underline X)$ 

$$\frac{\Delta_n(\Delta_{n-1+m}(\underline{\hat Y_1},\underline{X}),\cdots,(-1)^{i+1}\Delta_{n-1+m}(\underline{\hat Y_i},\underline{X}),\cdots,(-1)^{n+1}\Delta_{n-1+m}(\underline{\hat Y_n},\underline{X}))}{
\Delta_m(\underline{X})^{q^{n-1}}\Delta_{n+m}(\underline{Y},\underline{X})^{1+q+\cdots+q^{n-2}}}=$$
\begin{eqnarray}
\frac{\Delta_n(\Delta_{n-1}(\underline{\hat Y_1}),\cdots,(-1)^{i+1}\Delta_{n-1}(\underline{\hat Y_i}),\cdots,(-1)^{n+1}\Delta_{n-1}(\underline{\hat Y_n})}{
\Delta_{n}(\underline{Y})^{1+q+\cdots+q^{n-2}}}=:\gamma \label{13}
\end{eqnarray}

\end{coro}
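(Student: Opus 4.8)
The plan is to prove that both sides equal one and the same universal constant $\gamma_0\in\F_q^\star$ depending only on $n$ and $q$, and then to match each ratio to that constant. I will repeatedly use two elementary laws for Moore determinants: for a common scalar $A$ in the ground field, multiplying every argument by $A$ multiplies row $i$ of the Moore matrix by $A^{q^{i-1}}$, so $\Delta_n(Ac_1,\dots,Ac_n)=A^{1+q+\cdots+q^{n-1}}\Delta_n(c_1,\dots,c_n)$; and for scalars $\lambda_i\in\F_q$, $\Delta_n(\lambda_1c_1,\dots,\lambda_nc_n)=(\lambda_1\cdots\lambda_n)\Delta_n(c_1,\dots,c_n)$, so in particular $\Delta_n((-1)^{i+1}c_i)_i=(-1)^{\lfloor n/2\rfloor}\Delta_n(c_1,\dots,c_n)$. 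Write $R(\underline b):=\Delta_n((-1)^{i+1}\Delta_{n-1}(\underline{\hat b_i}))_i\big/\Delta_n(\underline b)^{1+q+\cdots+q^{n-2}}$, so the second ratio is exactly $R(\underline Y)$. Taking $\underline b$ to be indeterminates, Proposition \ref{Nprop2.4} (formula \eqref{F5}) together with the sign law shows $R(\underline b)\in\F_q^\star$; an element of a rational function field that lies in $\F_q$ is a constant $\gamma_0\in\F_q^\star$, and the equality $\Delta_n((-1)^{i+1}\Delta_{n-1}(\underline{\hat b_i}))_i=\gamma_0\,\Delta_n(\underline b)^{1+q+\cdots+q^{n-2}}$ is then a polynomial identity in $\underline b$. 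Specializing it to any family $\underline c$ with $\Delta_n(\underline c)\ne0$ gives $R(\underline c)=\gamma_0$.

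Next I establish the transitivity lemma that makes the $\underline X$-variables comparable. Let $\phi:=P_{\langle \underline X\rangle}$ be the monic $\F_q$-linear polynomial with kernel $\langle X_1,\dots,X_m\rangle$, so by Proposition \ref{prop2.2} one has $\phi(T)=\Delta_{m+1}(\underline X,T)/\Delta_m(\underline X)$. I claim
$$\Delta_{m+k}(X_1,\dots,X_m,a_1,\dots,a_k)=\Delta_m(\underline X)\,\Delta_k(\phi(a_1),\dots,\phi(a_k))$$
for all $a_1,\dots,a_k$. This is proved by induction on $k$ (the case $k=1$ being the definition of $\phi$): setting $V_k:=\langle \underline X,a_1,\dots,a_{k-1}\rangle$, Proposition \ref{prop2.2} gives $\Delta_{m+k}(\underline X,\underline a)/\Delta_{m+k-1}(\underline X,a_1,\dots,a_{k-1})=P_{V_k}(a_k)$, while Proposition \ref{prop2.5} gives $P_{V_k}=P_{\langle\phi(a_1),\dots,\phi(a_{k-1})\rangle}\circ\phi$, whence $P_{V_k}(a_k)=\Delta_k(\phi(a_1),\dots,\phi(a_k))/\Delta_{k-1}(\phi(a_1),\dots,\phi(a_{k-1}))$; combining with the inductive hypothesis yields the claim.

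Now set $u_i:=\phi(Y_i)$. Applying the transitivity lemma, and reordering the block $\underline X$ past $\underline{\hat Y_i}$ (sign $(-1)^{m(n-1)}$), respectively past $\underline Y$ (sign $(-1)^{mn}$), gives
$$\Delta_{n-1+m}(\underline{\hat Y_i},\underline X)=(-1)^{m(n-1)}\Delta_m(\underline X)\,\Delta_{n-1}(\underline{\hat u_i}),\qquad \Delta_{n+m}(\underline Y,\underline X)=(-1)^{mn}\Delta_m(\underline X)\,\Delta_n(\underline u).$$
Substituting into the first ratio and pulling the common scalar $A:=(-1)^{m(n-1)}\Delta_m(\underline X)$ out of the outer $\Delta_n$ by the scaling law, every power of $\Delta_m(\underline X)$ cancels because $q^{n-1}+\tfrac{q^{n-1}-1}{q-1}=\tfrac{q^{n}-1}{q-1}$, and the residual sign is $(-1)^{E}$ with $E=m(n-1)\tfrac{q^{n}-1}{q-1}-mn\tfrac{q^{n-1}-1}{q-1}$. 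If $p=2$ this is $+1$; if $q$ is odd then $\tfrac{q^{k}-1}{q-1}\equiv k\pmod 2$, so $E\equiv m(n-1)n-mn(n-1)=0$. Hence the first ratio equals $R(\underline u)$ exactly. Since $\Delta_n(\underline u)=\Delta_{n+m}(\underline X,\underline Y)/\Delta_m(\underline X)\ne0$, the specialization of the first paragraph gives $R(\underline u)=\gamma_0=R(\underline Y)$, i.e. the first ratio equals the second, as asserted.

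The main obstacle is the bookkeeping of the final paragraph: one must verify that the powers of $\Delta_m(\underline X)$ cancel exactly and that the product of all reordering signs and of the $(-1)^{i+1}$ factors is $+1$. The transitivity lemma, although standard, is precisely what converts the $\underline X$-dependence of the first ratio into the scalar $\Delta_m(\underline X)$ and an honest Moore determinant in the $u_i$, after which the constancy of $R$ on generic families finishes the argument.
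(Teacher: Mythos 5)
Your proof is correct, but it takes a genuinely different route from the paper's. The paper stays inside its differential-form framework: it realizes the relevant columns as an $L^q_{\mu+1,n}$-subspace of forms over $K=\F_q(\underline Y,\underline X)$ (Proposition \ref{propMAT}), invokes Proposition \ref{prop2.8} to obtain the identity \eqref{N13} in an auxiliary variable $Z$ with some constant $\gamma\in K^\star$, reads off the displayed formula by comparing leading coefficients in $Z$, and then lets $X_m,X_{m-1},\dots$ successively play the role of $Z$, descending from $m$ to $0$ while keeping the same $\gamma$. You instead (i) deduce from Proposition \ref{Nprop2.4} (formula \eqref{F5}) and $\F_q$-multilinearity of Moore determinants that the $m=0$ ratio $R(\underline b)$ is one universal constant $\gamma_0\in\F_q^\star$, constant as a rational function and hence stable under specialization, and (ii) prove the composition (``transitivity'') identity $\Delta_{m+k}(\underline X,\underline a)=\Delta_m(\underline X)\,\Delta_k(\phi(a_1),\dots,\phi(a_k))$ with $\phi=P_{\langle\underline X\rangle}$ from Propositions \ref{prop2.2} and \ref{prop2.5}, which converts the general ratio into $R(\phi(Y_1),\dots,\phi(Y_n))=\gamma_0=R(\underline Y)$; your exponent and sign bookkeeping checks out (the powers of $\Delta_m(\underline X)$ cancel because $q^{n-1}+\frac{q^{n-1}-1}{q-1}=\frac{q^{n}-1}{q-1}$, and the parity argument covers odd $q$ while characteristic $2$ is trivial). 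The trade-off: the paper's argument makes no use of the Ore--Elkies input --- Proposition \ref{Nprop2.4} is cited in the paper only to remark afterwards that $\gamma\in\F_q^\star$ --- whereas for you that input is indispensable, since without the constancy of $R$ the quantities $R(\phi(\underline Y))$ and $R(\underline Y)$ are a priori unrelated rational functions; in exchange, your proof bypasses the differential-form machinery entirely, and your transitivity lemma is a clean, reusable statement of independent interest. One small caveat: as stated ``for all $a_1,\dots,a_k$'', your lemma's induction invokes \eqref{F2} and Proposition \ref{prop2.5}, which require $\underline X,a_1,\dots,a_{k-1}$ to be $\F_q$-free at each step; this is harmless because you only apply the lemma to the indeterminates $a_i=Y_i$ (alternatively, both sides are polynomials in the $a_i$ and agree generically).
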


\begin{proof}
 
Let $A:=\F_q[\underline Y,\underline X]$ and $K$ be its fraction field. For
$j\in \{1,\cdots,n+m\}$, we denote  
$$
\omega_j:=\sum_{
(\epsilon_1,\epsilon_2,\cdots\epsilon_{n+m})\in \F_q^{n+m}}
 \frac{\epsilon_jdZ}{Z-\sum_{i=1}^{n}\epsilon_iY_i-\sum_{i=1}^{m}\epsilon_{n+i}X_{i}},
$$
\noindent then $\Omega_{\underline Y,\underline X}:=\sum_{1\leq j\leq n+m}\F_q\omega_j\subset \Omega^1_K(K(Z))$ is a
$L_{\mu+1,n+m}^q$ space where $\mu+1:=q^{n-1+m}(q-1)$ and
$\Omega_{\underline Y}:=\sum_{1\leq j\leq n}\F_q\omega_j\subset \Omega^1_K(K(Z))$ is a $n$-dimensional $\F_q$-subspace $\Omega$ of $\Omega_{\underline Y,\underline X}$, hence it is a
$L_{\mu+1,n}^q$ space. 

We apply Proposition \ref{propMAT} to this last space $\Omega$. 

\noindent For $1\leq i\leq n$, we have
$\omega_i= -\Delta_{n+m}(\underline Y,\underline X)^{q-1}\frac{\Delta_{n+m}(\underline{\hat Y_i},\underline X,Z)}{ \Delta_{n+m+1}(\underline Y,\underline X,Z)}dZ$.  It follows that ${\cal P}(\Omega)=\{\sum_{i=1}^{n}\epsilon_iY_i+\sum_{i=1}^{m}\epsilon_{n+i}X_{i}\}$ with $(\epsilon_1,\epsilon_2,\cdots\epsilon_{n+m})\in \F_q^{n+m}$ and $(\epsilon_1,\epsilon_2,\cdots,\epsilon_n)\neq (0,\cdots,0)$ is the set of poles ${\cal P}(\Omega)\subset K$ of the elements of $\Omega$  .  

Thus $P(Z):=\prod_{z\in {\cal P}(\Omega)}(Z-z)=\frac{\Delta_{m}(\underline{X})}{\Delta_{n+m}(\underline{Y},\underline{X})}\frac{\Delta_{n+m+1}(\underline{Y},\underline{X},Z)}{\Delta_{m+1}(\underline{X},Z)}$ (cf. \eqref{F2}) and $\omega_i=\frac{P_i}{P} dZ$ where 
$\frac{P_i}{P}=-\Delta_{n+m}(\underline Y,\underline X)^{q-1}\frac{\Delta_{n+m}(\underline{\hat Y_i},\underline X,Z)}{ \Delta_{n+m+1}(\underline Y,\underline X,Z)}$. 
The equality \eqref{F12} in Proposition \ref{prop2.8}
then gives the following equality

\msn $\Delta_n(\Delta_{n+m}(\underline{\hat Y_1},\underline{X},Z),\cdots,(-1)^{i+1}\Delta_{n+m}(\underline{\hat Y_i},\underline{X},Z),\cdots,$
\begin{eqnarray}(-1)^{n+1}\Delta_{n+m}(\underline{\hat Y_n},\underline{X},Z))=
\gamma \Delta_{m+1}(\underline X,Z)^{q^{n-1}}\Delta_{n+m+1}(\underline{Y},\underline{X},Z)^{1+q+\cdots+q^{n-2}}\label{N13}
\end{eqnarray}
\noindent where $\gamma\in \F_q(\underline Y,\underline X)$.

\ms Finally the comparison in the equality \eqref{N13} of the coefficients of higher degree in $Z$ gives the equality 
\msn $\Delta_n(\Delta_{n-1+m}(\underline{\hat Y_1},\underline{X}),\cdots,(-1)^{i+1}\Delta_{n-1+m}(\underline{\hat Y_i},\underline{X}),\cdots,
(-1)^{n+1}\Delta_{n-1+m}(\underline{\hat Y_n},\underline{X}))=$

\hfill $\gamma \Delta_{m}(\underline X)^{q^{n-1}}\Delta_{n+m}(\underline{Y},\underline{X})^{1+q+\cdots+q^{n-2}}.
$

\ms By making $X_m$ play the role played by $Z$ in \eqref{N13} we deduce that

\msn $\Delta_n(\Delta_{n+m-2}(\underline{\hat Y_1},X_1,\cdots,X_{m-1}),\cdots,(-1)^{i+1}\Delta_{n-1+m-1}(\underline{\hat Y_i},X_1,\cdots,X_{m-1}),\cdots,$

\hfill $(-1)^{n+1}\Delta_{n-1+m-1}(\underline{\hat Y_n},X_1,\cdots,X_{m-1})=$

\hfill $\gamma \Delta_{m-1}(X_1,\cdots,X_{m-1})^{q^{n-1}}\Delta_{n-1+m-1}(\underline{Y},X_1,\cdots,X_{m-1})^{1+q+\cdots+q^{n-2}}$.

\ms By iterating the process we exhaust $\underline X$ hence 

\centerline{$\Delta_n(\Delta_{n-1}(\underline{\hat Y_1}),\cdots,(-1)^{i+1}\Delta_{n-1}(\underline{\hat Y_i}),\cdots,(-1)^{n-1}\Delta_{n-1}(\underline{\hat Y_n}))=\gamma \Delta_{n}(\underline{Y})^{1+q+\cdots+q^{n-2}}$ }

\noindent as announced.

\end{proof}

\subsubsection{An equality between Moore's determinants II}
We show in two different ways that the constant $\gamma$ in \eqref{13} is equal to $1$. 

Thus we can state the theorem. 
\begin{thm} \label{thm1} 
Let $(\underline{Y}):=(Y_1,Y_2,\cdots,Y_n)$ and
$(\underline{X}):=(X_1,X_2,\cdots,X_m)$ be  $n+m$ indeterminates over $\F_q$ where $n\geq 2,\ m\geq 0$ and the convention that $\underline{X}=\emptyset $  and $\Delta_m(\underline{X})=1$ for $m=0$. 
We write $(\underline{\hat Y_i}):=(Y_1,\cdots,Y_{i-1},Y_{i+1}\cdots,Y_n)$ for $1\leq i\leq n$. 

Then we have the following polynomial equalities in $\F_q[X,Y]$,

\noindent $ \tilde\Delta_{n,m}(\underline{Y},\underline{X}):=\Delta_n(\Delta_{n-1+m}(\underline{\hat Y_1},\underline{X}),\cdots,(-1)^{i+1}\Delta_{n-1+m}(\underline{\hat Y_i},\underline{X}),\cdots,(-1)^{n+1}\Delta_{n-1+m}(\underline{\hat Y_n},\underline{X}))=\hfill $
\begin{eqnarray}\qquad \qquad \qquad \qquad \qquad \qquad \qquad \qquad \qquad  \Delta_m(\underline{X})^{q^{n-1}}\Delta_{n+m}(\underline{Y},\underline{X})^{1+q+\cdots+q^{n-2}}\label{F13}
\end{eqnarray}

\noindent which is also (compare to \eqref{F12})

\noindent $ \Delta_n(\frac{\Delta_{n-1+m}(\underline{\hat Y_1},\underline{X})}{\Delta_m(\underline{X})},\cdots,\frac{\Delta_{n-1+m}((\underline{\hat Y_i},\underline{X})}{\Delta_m(\underline{X})},\cdots,\frac{\Delta_{n-1+m}((\underline{\hat Y_n},\underline{X})}{\Delta_m(\underline{X})})=$
\begin{eqnarray} \qquad \qquad \qquad \qquad \qquad \qquad \qquad \qquad \qquad 
(-1)^{\left\lfloor\frac{n}{2}\right\rfloor}(\frac{\Delta_{n+m}(\underline{Y},\underline{X})}{\Delta_m(\underline{X})})^{1+q+\cdots+q^{n-2}},\label{F14}
\end{eqnarray}
where $\left\lfloor\frac{n}{2}\right\rfloor$ is the lower integer part of $\frac{n}{2}$. We remark that  $\Delta_m(\underline{X})$ divides $\Delta_{n+m}(\underline{Y},\underline{X})$ thanks to (\ref{F1}).

\end{thm}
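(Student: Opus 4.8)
The plan is to establish the polynomial identity \eqref{F13} first, then read off \eqref{F14} from it. I would begin by reducing to the case $m=0$. By Corollary \ref{coro2.3} the quotient
\[
\gamma=\frac{\tilde\Delta_{n,m}(\underline{Y},\underline{X})}{\Delta_m(\underline{X})^{q^{n-1}}\Delta_{n+m}(\underline{Y},\underline{X})^{1+q+\cdots+q^{n-2}}}
\]
is independent of $m$ and coincides with its value at $m=0$, namely $\Delta_n((-1)^{i+1}\Delta_{n-1}(\underline{\hat Y_i}))/\Delta_n(\underline{Y})^{1+q+\cdots+q^{n-2}}$. By \eqref{F5} in Proposition \ref{Nprop2.4} this ratio lies in $\F_q^\star$, so it is a nonzero constant and the whole problem reduces to computing its value and checking that it equals $1$.

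To pin down that constant I would take determinants in the matrix identity of Theorem \ref{inverse}. Writing $b_i:=(-1)^{i-1}\Delta_{n-1}(\underline{\hat Y_i})$, the left factor is the Moore matrix $\mathcal{M}_n(b_1,\dots,b_n)$, whose determinant is exactly $\tilde\Delta_{n,0}(\underline{Y})$, while $\det {}^t\mathcal{M}_n(\underline{Y})=\Delta_n(\underline{Y})$. On the right-hand side the displayed matrix is, apart from its first row, lower triangular: cofactor expansion along the first row keeps only the corner entry $(-1)^{n-1}\Delta_n(\underline{Y})$ (with cofactor sign $(-1)^{n+1}$), and the complementary minor is lower triangular with diagonal $\Delta_n(\underline{Y})^{q^0},\dots,\Delta_n(\underline{Y})^{q^{n-2}}$, hence determinant $\Delta_n(\underline{Y})^{1+q+\cdots+q^{n-2}}$. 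Comparing the two evaluations gives
\[
\tilde\Delta_{n,0}(\underline{Y})\,\Delta_n(\underline{Y})=\Delta_n(\underline{Y})\cdot\Delta_n(\underline{Y})^{1+q+\cdots+q^{n-2}},
\]
and cancelling the nonzero factor $\Delta_n(\underline{Y})$ in the integral domain $\F_q[\underline{Y}]$ yields $\tilde\Delta_{n,0}(\underline{Y})=\Delta_n(\underline{Y})^{1+q+\cdots+q^{n-2}}$, that is $\gamma=1$; together with the reduction above this proves \eqref{F13} for all $m$.

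Finally I would derive \eqref{F14} from \eqref{F13} by a scaling computation. Dividing each of the $n$ entries of a Moore matrix by the single element $\Delta_m(\underline{X})$ factors $\Delta_m(\underline{X})^{-q^{i-1}}$ out of its $i$-th row, multiplying the determinant by $\Delta_m(\underline{X})^{-(1+q+\cdots+q^{n-1})}$; and extracting the signs $(-1)^{i-1}\in\F_q$ from the columns is legitimate because $c^{q^j}=c$ for $c\in\F_q$, so it multiplies the determinant by $(-1)^{\sum_{i=1}^{n}(i-1)}=(-1)^{\lfloor n/2\rfloor}$. Substituting \eqref{F13} and simplifying the exponent $q^{n-1}-(1+q+\cdots+q^{n-1})=-(1+q+\cdots+q^{n-2})$ gives \eqref{F14} exactly.

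The real obstacle is not this deduction but the matrix identity of Theorem \ref{inverse} underlying it. Proving that amounts to recognizing each entry $\sum_{j}b_j^{q^{k-1}}Y_j^{q^{l-1}}$ of the product as a single $n\times n$ determinant, obtained by Laplace expansion along a first row $(Y_1^{q^{l-1}},\dots,Y_n^{q^{l-1}})$, whose remaining rows carry the Frobenius exponents $q^{k-1},q^{k},\dots,q^{k+n-3}$. Such a determinant vanishes whenever the exponent $q^{l-1}$ duplicates one of the others, which produces the zero pattern below the first row; and when the exponents form $n$ consecutive powers it collapses to $\Delta_n(\underline{Y})^{q^{k-2}}$, which gives the subdiagonal. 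Carrying out this identification entry by entry, with careful control of the cyclic-permutation sign in the $(1,n)$ place, is where the work lies; once it is available the present theorem is the one-line cancellation above. If one prefers to avoid Theorem \ref{inverse}, the case $m=1$ can instead be proved by induction on $n$ through a direct reduction of the defining determinants, the inductive step being then the main technical hurdle.
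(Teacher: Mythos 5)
Your proposal is correct and is essentially the paper's own second proof: reduce to $m=0$ via Corollary \ref{coro2.3}, then obtain $\gamma=1$ by taking determinants in the matrix identity of Theorem \ref{inverse}, exactly as the paper does after that theorem. Your row/column-scaling derivation of \eqref{F14} from \eqref{F13}, and your sketch of how Theorem \ref{inverse} itself is proved via Laplace-expansion identities, also match the paper's treatment.
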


We deduce by specialization of formula {\eqref{F13}} the corollary
\begin{coro}
Let $A$ be a commutative ring containing $\F_q$.  Let $(\underline{a}):=(a_1,a_2,\cdots,a_n)\in A^n$ and
$(\underline{b}):=(b_1,b_2,\cdots,b_m)\in A^m$ where $n\geq 2,\ m\geq 0$ and the convention that $\underline{b}=\emptyset $  and $\Delta_m(\underline{b})=1$ for $m=0$. Then

\noindent $\Delta_n(\Delta_{n-1+m}(\underline{\hat a_1},\underline{b}),\cdots,(-1)^{i+1}\Delta_{n-1+m}(\underline{\hat a_i},\underline{b}),\cdots,(-1)^{n+1}\Delta_{n-1+m}(\underline{\hat a_n},\underline{b}))= $

$\qquad  \qquad  \qquad  \qquad  \qquad  \qquad \qquad  \qquad  \qquad  \qquad  \qquad  \qquad  \qquad  \qquad   \Delta_m(\underline{b})^{q^{n-1}}\Delta_{n+m}(\underline{a},\underline{b})^{1+q+\cdots+q^{n-2}}
$
\end{coro}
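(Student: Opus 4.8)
The plan is to obtain the corollary purely by specialization from the polynomial identity \eqref{F13} of Theorem \ref{thm1}, so that no new computation is needed. First I would observe that both sides of \eqref{F13} are genuine elements of the polynomial ring $\F_q[\underline Y,\underline X]$. Indeed, each Moore determinant $\Delta_k$ is, by its definition as the determinant of a matrix whose entries are $q$-power monomials, a polynomial in its arguments with coefficients in the prime field; consequently the nested determinants on the left-hand side, as well as the products and powers on the right-hand side, are polynomials in the indeterminates $Y_1,\dots,Y_n,X_1,\dots,X_m$ with coefficients in $\F_q$. Thus \eqref{F13} is literally an equality in $\F_q[\underline Y,\underline X]$.

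Next, given a commutative ring $A$ containing $\F_q$ together with $\underline a\in A^n$ and $\underline b\in A^m$, I would introduce the unique $\F_q$-algebra homomorphism
$$\psi:\F_q[\underline Y,\underline X]\longrightarrow A,\qquad Y_i\mapsto a_i,\ X_j\mapsto b_j.$$
The essential point is that $\psi$, being a ring homomorphism, sends each entry $c^{q^j}$ of a Moore matrix to $\psi(c)^{q^j}$, and, because the determinant is a polynomial with integer coefficients in the matrix entries, commutes with the formation of the determinant. Hence for any polynomials $c_1,\dots,c_k\in\F_q[\underline Y,\underline X]$ one has
$$\psi\bigl(\Delta_k(c_1,\dots,c_k)\bigr)=\Delta_k\bigl(\psi(c_1),\dots,\psi(c_k)\bigr),$$
and in particular $\psi\bigl(\Delta_{n-1+m}(\underline{\hat Y_i},\underline X)\bigr)=\Delta_{n-1+m}(\underline{\hat a_i},\underline b)$, together with $\psi\bigl(\Delta_m(\underline X)\bigr)=\Delta_m(\underline b)$ and $\psi\bigl(\Delta_{n+m}(\underline Y,\underline X)\bigr)=\Delta_{n+m}(\underline a,\underline b)$.

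Finally I would apply $\psi$ to both sides of \eqref{F13}. On the left, $\psi$ carries the outer determinant of the signed cofactors $(-1)^{i+1}\Delta_{n-1+m}(\underline{\hat Y_i},\underline X)$ to the outer determinant of the $(-1)^{i+1}\Delta_{n-1+m}(\underline{\hat a_i},\underline b)$, by the compatibility just recorded, the signs $(-1)^{i+1}\in\F_q$ being fixed by $\psi$. On the right, $\psi$ turns $\Delta_m(\underline X)^{q^{n-1}}\Delta_{n+m}(\underline Y,\underline X)^{1+q+\cdots+q^{n-2}}$ into $\Delta_m(\underline b)^{q^{n-1}}\Delta_{n+m}(\underline a,\underline b)^{1+q+\cdots+q^{n-2}}$, since $\psi$ respects products and leaves the integer exponents unchanged. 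This yields exactly the asserted identity in $A$. I expect no genuine obstacle: the only points requiring care are the verification that every quantity in sight is an honest polynomial over $\F_q$ in the indeterminates, and that the ring map $\psi$ commutes with the Moore-determinant construction, the latter resting simply on the polynomiality of the determinant and on $\psi$ respecting powers.
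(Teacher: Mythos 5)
Your proof is correct and is essentially the paper's own argument: the paper deduces this corollary precisely "by specialization of formula \eqref{F13}", which is exactly your evaluation homomorphism $Y_i\mapsto a_i$, $X_j\mapsto b_j$ applied to the polynomial identity of Theorem \ref{thm1}. Your write-up merely makes explicit the (correct) observation that this homomorphism commutes with the formation of Moore determinants.
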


\subsection{First proof of Theorem \ref{thm1}. The case $m=1$ by induction on $n$. }

\noindent
{\bf A.} We check \eqref{F13} for $(n,m)=(2,1)$.  i.e.
 $\Delta_2(\frac{\Delta_2(Y_2,X)}{X},-\frac{\Delta_2(Y_1,X)}{X})=\frac{\Delta_3(Y_1,Y_2,X)}{X}.$ 
 
 This is an equality between polynomials in the variable $X$ of degree $q^2-1$. The terms of higher degree are equal since $\Delta_2(\Delta_1(Y_2X^{q-1}),-\Delta_1(Y_1X^{q-1}))=\Delta_2(Y_1,Y_2)X^{q^2-1}$. For $Y_{\underline{\alpha}}:=\alpha_1Y_1+\alpha_2Y_2$ with $\underline{\alpha}\in \F_q^2$ we have
 $\Delta_2(\Delta_2(Y_2,Y_{\underline{\alpha}}),\Delta_2(Y_1,Y_{\underline{\alpha}}))=0$, thus the two polynomials have the same zeros (see (\ref{F2})). Hence the equality.
 
\msn
 {\bf B.} We assume that $m=1$ and we  proceed by induction on $n$. Let us assume that 
 \eqref{F13}  is satisfied for $m=1$ and up to rank $n$. We show it for $m=1$ and $n+1$.
 
 \ms
 {\bf B1.} We show the equality of the coefficients of highest degree in \eqref{F13} for $m=1$ and $n+1$; it is also \eqref{F13} for $m=0$ and $n+1$.
 
 Let $(\underline{Y}):=(Y_1,Y_2,\cdots,Y_{n+1})$ be  $n+1$ indeterminates over $\F_q$.
 Let $j$ such that $1\leq j\leq n+1$,  we apply \eqref{F13} to the $n$ 
 indeterminates   $(Y_1,\cdots,Y_{j-1},\hat Y_j,Y_{j+1},\cdots,Y_{n+1}):=\underline{\hat Y_j}$ over $\F_q$ and we specialize $X$ in $Y_j$. Thus
 
\msn 
 $
\Delta_n(\Delta_n(\hat Y_1,...,\hat Y_j,..,Y_{n+1},Y_j),\cdots,\Delta_n(Y_1,...,\hat Y_i,..,\hat Y_j,..,Y_{n+1},Y_j),\cdots,
\Delta_n(Y_1,..,\hat Y_j,...,Y_n,\hat Y_{n+1},Y_j)) =$

\hfill $(-1)^{\left\lfloor\frac{n}{2}\right\rfloor}\tilde\Delta_{n,1}(\underline{\hat Y_j},Y_j)=\ \ (cf. \eqref{F13})$

\noindent$(-1)^{\left\lfloor\frac{n}{2}\right\rfloor}Y_j^{q^{n-1}}\Delta_{n+1}(Y_1,..,\hat Y_j,..,Y_{n+1},Y_j)^{1+q+\cdots+q^{n-2}}
 =(-1)^{\left\lfloor\frac{n}{2}\right\rfloor}Y_j^{q^{n-1}}((-1)^{n+1-j}\Delta_{n+1}(\underline{Y}))^{1+q+\cdots+q^{n-2}}$
\begin{eqnarray}
\qquad  \qquad \qquad \qquad \qquad \qquad \qquad \qquad =(-1)^{\left\lfloor\frac{n}{2}\right\rfloor}(-1)^{(n-1)(n+1-j)}Y_j^{q^{n-1}}\Delta_{n+1}(\underline{Y})^{1+q+\cdots+q^{n-2}}.\label{F15}
\end{eqnarray}
\ms
In what follows we use the following three identities:

\msn For $1\leq j<i\leq n+1$, we have $\Delta_n(Y_1,\cdots,\hat Y_j,\cdots,Y_i,\cdots,Y_{n+1})=(-1)^{n+1-i}
\Delta_n(Y_1,\cdots,\hat Y_j,\cdots\hat Y_i,\cdots,Y_{n+1},Y_i)$,

\noindent for $1\leq i<j\leq n+1$, we have $\Delta_n(Y_1,\cdots,Y_i,\cdots\hat Y_j,\cdots,Y_{n+1})=(-1)^{n-i}
\Delta_n(Y_1,\cdots\hat Y_i,\cdots\hat Y_j,\cdots,Y_{n+1},Y_i)$,

\noindent and for $1\leq i\leq n+1$, we have $\Delta_{n+1}(Y_1,\cdots\hat Y_i,\cdots,Y_{n+1},Y_i)=(-1)^{n+1-i}\Delta_{n+1}(\underline Y)$.

\ms
Then it follows with \eqref{F15} that for $1\leq i\leq n+1$, 
$\Delta_n(\Delta_n(\underline{\hat Y_1}),\Delta_n(\underline{\hat Y_2}),\cdots,\hat\Delta_n(\underline{\hat Y_i}),\cdots,\Delta_n(\underline{\hat Y_{n+1}}))=\Delta_n(\Delta(\hat Y_1,Y_2,\cdots,Y_i,\cdots,Y_{n+1}),\cdots,\Delta_n( Y_1,\cdots,\hat Y_{i-1},Y_i,\cdots,Y_{n+1}),\Delta_n( Y_1,\cdots,Y_{i-1},Y_i,\hat Y_{i+1},\cdots,Y_{n+1}),\cdots,$

\noindent $\Delta_n( Y_1,\cdots,Y_i,\cdots,\hat Y_{n+1}))=
(-1)^{(n+1-i)(i-1)+(n-i)(n+1-i)}(-1)^{\left\lfloor\frac{n}{2}\right\rfloor}\tilde\Delta_{n,1}(\underline{\hat Y_i},Y_i)=$

\noindent
$(-1)^{(n+1-i)(i-1)+(n-i)(n+1-i)}(-1)^{\left\lfloor\frac{n}{2}\right\rfloor}(-1)^{(n-1)(n-i+1)}Y_i^{q^{n-1}}\Delta_{n+1}(\underline{Y})^{1+q+\cdots+q^{n-2}}$
\noindent \begin{eqnarray}
\qquad\qquad\qquad\qquad\qquad\qquad\qquad\qquad\qquad\qquad\qquad\qquad =(-1)^{\left\lfloor\frac{n}{2}\right\rfloor}Y_i^{q^{n-1}}\Delta_{n+1}(\underline{Y})^{1+q+\cdots+q^{n-2}}.\label{NF16}\end{eqnarray}
Thus, by developing the determinant $\Delta_{n+1}(\Delta_n(\underline{\hat Y_1}),\cdots,\Delta_n(\underline{\hat Y_i}),\cdots,\Delta_n(\underline{\hat Y_{n+1}}))$ along the first row,  it follows that

\noindent $\Delta_{n+1}(\Delta_n(\underline{\hat Y_1}),\cdots,\Delta_n(\underline{\hat Y_i}),\cdots,\Delta_n(\underline{\hat Y_{n+1}})) =\Delta_n(\underline{\hat Y_1})\Delta_n(\hat\Delta_n(\underline{\hat Y_1}),\Delta_n(\underline{\hat Y_2}),\cdots,\Delta_n(\underline{\hat Y_i}),
\cdots,\Delta_n(\underline{\hat Y_{n+1}}))^q$

\noindent $\qquad \qquad\qquad\qquad\qquad\qquad\qquad\qquad-\Delta_n(\underline{\hat Y_2})\Delta_n(\Delta_n(\underline{\hat Y_1}),\hat\Delta_n(\underline{\hat Y_2}),\cdots,\Delta_n(\underline{\hat Y_i}),
\cdots,\Delta_n(\underline{\hat Y_{n+1}}))^q+\cdots+$  

\noindent $\qquad\qquad\qquad\qquad\qquad\qquad\qquad(-1)^{n+2}\Delta_n(\underline{\hat Y_{n+1}})\Delta_n(\Delta_n(\underline{\hat Y_1}),\cdots,\Delta_n(\underline{\hat Y_i}),
\cdots,\Delta_n(\underline{\hat Y_{n}}),\hat\Delta_n(\underline{\hat Y_{n+1}}))^q=$

\noindent
$\qquad\qquad\qquad\qquad\qquad\qquad\qquad\qquad\ (-1)^{\left\lfloor\frac{n}{2}\right\rfloor}(\sum_{1\leq i\leq n+1}(-1)^{i+1}\Delta_{n}(\underline{\hat Y_i})Y_i^{q^n})\Delta_{n+1}(\underline{Y})^{q(1+q+\cdots+q^{n-2})}=$

\noindent
$(-1)^{\left\lfloor\frac{n}{2}\right\rfloor}(-1)^n(\sum_{1\leq i\leq n+1}(-1)^{i+n+1}Y_i^{q^n}\Delta_{n}(\underline{\hat Y_i}))\Delta_{n+1}(\underline{Y})^{q(1+q+\cdots+q^{n-2})}=(-1)^{\left\lfloor\frac{n+1}{2}\right\rfloor}\Delta_{n+1}(\underline{Y})^{1+q+\cdots+q^{n-1}}.$ 


This is \eqref{F13} for 
$m=0$ and $n+1$. This also shows the equality of the coefficients of higher 
degree in \eqref{F13} for $m=1$ and $n+1$.

\ms
{\bf B2.} We compare the zeros with multiplicity in the two members of \eqref{F13} for $m=1$ and $n+1$.

We write

\noindent $G:=\Delta_{n+1}(\Delta_{n+1}(\underline{\hat Y_1},X),\cdots,(-1)^{i+1}\Delta_{n+1}(\underline{\hat Y_i},X),\cdots,(-1)^{n+2}\Delta_{n+1}(\underline{\hat Y_{n+1}},X)) $ and 

\noindent $D:= X^{q^{n}}\Delta_{n+2}(\underline{Y},X)^{1+q+\cdots+q^{n-1}}.$

We are first interested in $X=0$, for that we notice that 

$\frac{G}{X^{1+q+\cdots+q^n}}=\Delta_{n+1}(\frac{\Delta_{n+1}(\underline{\hat Y_1},X)}{X},\cdots,(-1)^{i+1}\frac{\Delta_{n+1}(\underline{\hat Y_i},X)}{X},\cdots,(-1)^{n+2}\frac{\Delta_{n+1}(\underline{\hat Y_{n+1}},X)}{X}) $
whose constant term is
$(-1)^{n(n+1)}\Delta_{n+1}(\Delta_n(\underline{\hat Y_1}),\cdots,(-1)^{i+1}\Delta_n(\underline{\hat Y_i}),\cdots,(-1)^{n+2}\Delta_n(\underline{\hat Y_{n+1}}))^q$.
On the other hand

\noindent $\frac{D}{X^{1+q+\cdots+q^n}}=(\frac{\Delta_{n+2}(\underline{Y},X)}{X})^{1+q+\cdots+q^{n-1}}$ whose constant term is
$(-1)^{(n+1)n}\Delta_{n+1}(\underline{Y})^{q(1+q+\cdots+q^{n-1})}$. 

Then we have equality and non nullity of constant terms by {\bf B1.}, which ensures in particular that the multiplicity of $X=0$ is 
$1+q+\cdots+q^n$ in $G$ and in $D$.

Thanks to (\ref{F2}), we can handle the other zeros. Let $\underline{\epsilon}:=(\epsilon_1,\epsilon_2,\cdots,\epsilon_{n+1})\in \F_q^{n+1}-(0,\cdots,0)$, and 
$x_{\underline{\epsilon}}:=\sum_{1\leq j\leq n+1}\epsilon_jY_j$. 
We need to show that $x_{\underline{\epsilon}}$ is a root of  $G$ with multiplicity
$1+q+\cdots+q^{n-1}$. 

With \eqref{F1} we get

$$G=\prod_{1\leq i\leq n}\prod_{\alpha_{i-1}\in \F_q}\cdots\prod_{\alpha_1\in \F_q} (\Delta[i](\underline Y,X)+\alpha_{i-1}\Delta[i-1](\underline Y,X)+\cdots+\alpha_{1}\Delta[1](\underline Y,X))$$
where $\Delta[i](\underline Y,X):=(-1)^{i+1}\Delta_{n+1}(\underline{\hat Y_i},X)$ and so with Proposition \ref{prop2}
 
 $$G=\prod_{1\leq i\leq n}\prod_{\alpha_{i-1}\in \F_q}\cdots\prod_{\alpha_{1}\in \F_q}\Delta_{(\alpha_1,\cdots,\alpha_{i-1},1,0,\cdots,0)}(\underline Y,X).$$
 \noindent  where $\Delta_{(\alpha_1,\cdots,\alpha_{i-1},1,0,\cdots,0)} =\Delta_{\varphi_i}$ with $\varphi_i=\alpha_1Y_1^\star+\cdots+\alpha_{i-1}Y_i^\star+Y_i^\star $ and $(Y_i^\star)_{1\leq i\leq n}$ is the dual basis of $(Y_i)_{1\leq i\leq n}$. 
The roots of $\Delta_{(\alpha_1,\cdots,\alpha_{i-1},1,0,\cdots,0)}(\underline Y,X)$ seen as a polynomial in  $X$ and coefficients in $\F_q(\underline Y)$ are
simple  (Proposition \ref{prop2}) and 
$\Delta_{(\alpha_1,\cdots,\alpha_{i-1},1,0,\cdots,0)}(x_{\underline{\epsilon}})=0$
if and only if
 $\epsilon_i+\alpha_{i-1}\epsilon_{i-1}+\cdots+\alpha_{1}\epsilon_{1}=0$. Thus the multiplicity of $x_{\underline{\epsilon}}$ in $G$ is equal to the cardinality of the $ (\alpha_1:\alpha_2:\cdots:\alpha_{n+1}) \in \mathbb{P}^n(\F_q)$ which belong to the hyperplane  $\sum_{1\leq i\leq n+1}\epsilon_i\alpha_i=0$ i.e. 
$1+q+\cdots+q^{n-1}$.  
Hence we get \eqref{F13} for $m=1$ and $n+1$.

\subsection{Second proof of Theorem \ref{thm1} by a matrix interpretation in the case $m=0$ }

The following theorem is of interest independently of the rest. It gives indeed a relation between a generic Moore matrix and the Moore matrix of the cofactors of its first row; relation analogous to the classical relation between a square matrix and its comatrix. The $m=0$ case of Theorem \ref{thm1}  is then an immediate corollary by taking the determinants. 
\newpage
\begin{thm}\label{inverse}
 Let  $Y_1,Y_2,\cdots,Y_n$, be $n$ indeterminates over $\F_q$ and
 
${\cal{M}}_{n}(\Delta_n(\underline{\hat Y_1}),.,(-1)^{i-1}\Delta_n(\underline{\hat Y_i}),
\cdots,(-1)^{n-1}\Delta_n(\underline{\hat Y_{n}}))$, the Moore matrix of the cofactors 

\noindent $(\Delta_n(\underline{\hat Y_1}),.,(-1)^{i-1}\Delta_n(\underline{\hat Y_i}),
\cdots,(-1)^{n-1}\Delta_n(\underline{\hat Y_{n}}))$ of the first row of ${\cal{M}}_n(\underline{Y})$.
Then one gets

$$
 {\cal{M}}_{n}(\Delta_n(\underline{\hat Y_1}),\cdots,(-1)^{i-1}\Delta_n(\underline{\hat Y_i}),
\cdots,(-1)^{n-1}\Delta_n(\underline{\hat Y_{n}}))\ ^t{\cal{M}}_n(\underline Y)=
$$
\begin{eqnarray}
=\left(
\begin{array}{ccccccc}
0&.&.&\cdots&.&0& (-1)^{n-1}\Delta_n(\underline Y)  \\
\Delta_n(\underline Y)&0&.&\cdots&.&0&0\\
\alpha_1&\Delta_n(\underline Y)^q&0&\cdots&.&0&0  \\
\alpha_2&\alpha_1^q&\Delta_n(\underline Y)^{q^2}&\cdots&.&0&0\\
\vdots&\vdots&\vdots&\cdots&.&\vdots&\vdots\\
\alpha_{n-2}&\alpha_{n-3}^q&.&\cdots&\alpha_1^{q^{n-3}}&\Delta_n(\underline Y)^{q^{n-2}}&0

\end{array}
\right)\label{FF16}
\end{eqnarray}
where $\alpha_k:=
\Delta_n(\underline{\hat Y_1})^{q^{k+1}}Y_1+\cdots+(-1)^{i-1}\Delta_n(\underline{\hat Y_i})^{q^{k+1}}Y_i+\cdots+(-1)^{n-1}\Delta_n(\underline{\hat Y_n})^{q^{k+1}}Y_n $.
\end{thm}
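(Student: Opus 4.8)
The plan is to compute the $(r,c)$ entry of the left-hand product directly and to recognize each such entry as a cofactor expansion of a suitable generalized Moore determinant. Write $c_i:=(-1)^{i-1}\Delta_{n-1}(\underline{\hat Y_i})$ for the cofactors. The $(r,i)$ entry of $\mathcal{M}_n(c_1,\dots,c_n)$ is $c_i^{q^{r-1}}$ and the $(i,c)$ entry of $^t\mathcal{M}_n(\underline Y)$ is $Y_i^{q^{c-1}}$; since each $(-1)^{i-1}\in\F_q$ is fixed by the $q$-power Frobenius, the $(r,c)$ entry of the product is $S_{r,c}:=\sum_{1\le i\le n}(-1)^{i-1}\Delta_{n-1}(\underline{\hat Y_i})^{q^{r-1}}Y_i^{q^{c-1}}$. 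For exponents $e_1,\dots,e_n\in\N$ set $V(e_1,\dots,e_n):=\det\big(Y_\ell^{q^{e_j}}\big)_{1\le j,\ell\le n}$. Expanding $V(c-1,r-1,r,\dots,r+n-3)$ along its first row (whose $\ell$-th entry is $Y_\ell^{q^{c-1}}$), the minor obtained by deleting the first row and the $i$-th column is exactly $\Delta_{n-1}(\underline{\hat Y_i})^{q^{r-1}}$ and its cofactor sign is $(-1)^{i-1}$, so $S_{r,c}=V(c-1,r-1,r,\dots,r+n-3)$.

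I then record three elementary properties of $V$: it is alternating in its exponents, it vanishes whenever two exponents coincide, and a uniform shift raises it to a power, $V(e_1+t,\dots,e_n+t)=V(e_1,\dots,e_n)^{q^{t}}$; in particular $V(0,1,\dots,n-1)=\Delta_n(\underline Y)$. With these in hand every entry $S_{r,c}$ is read off from the exponent pattern $(c-1,r-1,r,\dots,r+n-3)$.

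The case analysis then proceeds row by row. For $r=1$ the exponents are $(c-1,0,1,\dots,n-2)$: two of them coincide unless $c=n$, giving $S_{1,c}=0$ for $c<n$, while for $c=n$ the cyclic reordering of $(n-1,0,1,\dots,n-2)$ into $(0,1,\dots,n-1)$ contributes the sign $(-1)^{n-1}$, so $S_{1,n}=(-1)^{n-1}\Delta_n(\underline Y)$. For $r\ge 2$ and $c\ge r$ the value $c-1$ already occurs among $r-1,\dots,r+n-3$, so $S_{r,c}=0$; for the subdiagonal $c=r-1$ the exponents are the $n$ consecutive integers $r-2,r-1,\dots,r+n-3$, whence $S_{r,r-1}=\Delta_n(\underline Y)^{q^{r-2}}$. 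Finally, for $c\le r-2$ one identifies $\alpha_k=V(0,k+1,\dots,k+n-1)$ (this is $S_{k+2,1}$ expanded as above), and the shift property gives $\alpha_{r-1-c}^{q^{c-1}}=V(c-1,r-1,r,\dots,r+n-3)=S_{r,c}$, matching the stated entry $\alpha_{r-1-c}^{q^{c-1}}$.

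The conceptual heart is the reduction in the first paragraph: once the entries of the product are reinterpreted as generalized Moore determinants $V(\cdot)$, the whole matrix identity becomes a matter of reading off repeated-versus-distinct and consecutive-versus-shifted exponent patterns. The only genuinely delicate points are bookkeeping ones — getting the cyclic-permutation sign $(-1)^{n-1}$ in the corner entry right, and checking that the shift identity reproduces precisely the definition of $\alpha_k$ (including that the relevant indices $k=r-1-c$ stay in the range $1\le k\le n-2$). I expect no structural obstacle beyond this sign and index bookkeeping.
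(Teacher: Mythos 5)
Your proof is correct and is essentially the paper's own argument in a more unified packaging: the paper derives the entries from the Laplace expansions of $\mathcal{M}_n(\underline Y)$ along its first and last rows (the identities $\sum_i(-1)^{i-1}\Delta_{n-1}(\underline{\hat Y_i})^{q}Y_i^{q^k}$ and $\sum_i(-1)^{i-1}\Delta_{n-1}(\underline{\hat Y_i})Y_i^{q^k}$, equal to $0$ or to $\pm\Delta_n(\underline Y)$ according to $k$) raised to Frobenius powers, and your generalized determinant $V(e_1,\dots,e_n)$ with its repeated-exponent vanishing, shift property and cyclic-sign computation encodes exactly those same cofactor identities. The only cosmetic difference is that the paper treats the corner entry via the last-row expansion where you use a cyclic permutation sign; the content is identical.
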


\begin{proof}
We write $
 {\cal{M}}_{n}(\Delta_n(\underline{\hat Y_1}),\cdots,(-1)^{i-1}\Delta_n(\underline{\hat Y_i}),
\cdots,(-1)^{n-1}\Delta_n(\underline{\hat Y_{n}}))\ ^t{\cal{M}}_n(\underline Y)=:[m_{i,j}]_{1\leq i,j\leq n}.
$

 Since  $(-1)^{i-1}\Delta_n(\underline{\hat Y_i})^q$ is the  cofactor of  $Y_i$ in the Moore matrix ${\cal{M}}_{n}(\underline{Y})$, we get the following formulas 
 \begin{eqnarray}
 \Delta_n(\underline{\hat Y_1})^qY_1+\cdots+(-1)^{i-1}\Delta_n(\underline{\hat Y_i})^qY_i+\cdots+(-1)^{n-1}\Delta_n(\underline{\hat Y_n})^qY_n=\Delta_n(\underline Y)\label{FF17}
 \end{eqnarray}
 and for $1\leq k\leq n-1,$
 \begin{eqnarray}
 \Delta_n(\underline{\hat Y_1})^qY_1^{q^k}+\cdots+(-1)^{i-1}\Delta_n(\underline{\hat Y_i})^qY_i^{q^k}+\cdots+(-1)^{n-1}\Delta_n(\underline{\hat Y_n})^qY_n^{q^k}=0.\label{FF18}
 \end{eqnarray}
 
Since  $(-1)^{i-1}\Delta_n(\underline{\hat Y_i})$ is the  cofactor of $Y_i^{q^{n-1}}$, we get the following formulas 

 \begin{eqnarray}
\Delta_n(\underline{\hat Y_1})Y_1^{q^{n-1}}+\cdots+(-1)^{i-1}\Delta_n(\underline{\hat Y_i})Y_i^{q^{n-1}}+\cdots+(-1)^{n-1}\Delta_n(\underline{\hat Y_n})Y_n^{q^{n-1}}=(-1)^{n-1}\Delta_n(\underline Y)
  \end{eqnarray}\label{FF19}
\noindent and for $0\leq k\leq n-2,$
 \begin{eqnarray}
 \Delta_n(\underline{\hat Y_1})Y_1^{q^k}+\cdots+(-1)^{i-1}\Delta_n(\underline{\hat Y_i})Y_i^{q^k}+\cdots+(-1)^{n-1}\Delta_n(\underline{\hat Y_n})Y_n^{q^k}=0.\label{FF20}
 \end{eqnarray}
It follows from the relations \eqref{FF19} and \eqref{FF20} that $m_{1,j}=0$ for 
$1\leq j\leq n_1$ and that $m_{1,n}=(-1)^{n-1}\Delta_n(\underline Y).$

Let now $2\leq i\leq n$.
Raising \eqref{FF17} to the power $q^{i-1}$, it follows that 
$m_{i,i-1}=\Delta_n(\underline Y)^{q^{i-1}}.$
Raising \eqref{FF18} to the power $q^{i-1}$, it follows that 
$m_{i,j}=0$ for $i\leq j\leq n$.

In conclusion the matrix $[m_{i,j}]_{1\leq i,j\leq n}$ satisfies \eqref{FF16}.
\end{proof}

By taking the determinant of the matrices in \eqref{FF16} we obtain that $\gamma=1$ in the case $m=0$ of Corollary \ref{coro2.3}, 
Theorem \ref{thm1} follows.

\subsection{A matrix interpretation of the general case $(n,m)$.}

The following theorem is a generalization of Theorem \ref{thm2} adapted to a matrix interpretation of the general case of Theorem \ref{thm1}. 
Theorem \ref{thm2} corresponds to the case $m=0$ i.e. $\underline X=\emptyset.$
\begin{thm}\label{thm2} For $n\geq 2$, $m\geq 1$ let $Y_1,Y_2,\cdots,Y_n,X_1,X_2,\cdots,X_m$ be $n+m$ indeterminates over $\F_q$, $\delta_i:=(-1)^{i-1}\Delta_{n+m-1}((\underline{\hat Y_i}),(\underline{X}))$ for $1\leq i\leq n$, $ \delta_i:=(-1)^{i-1}\Delta_{n+m-1}((\underline{Y}),(\underline{\hat X_i}))$ for $n+1\leq i\leq n+m$. 

Let  $A:=[a_{i,j}]_{1\leq i,j\leq n+m}$, where $a_{i,j}=(\delta_j)^{q^{i-1}}$ for $1\leq i\leq n,\ 1\leq j\leq n+m$ and $a_{i,i-n}=1$ for $n+1\leq i\leq n+m$ and $a_{i,j}=0$ for $n+1\leq i\leq n+m$ and  $j\neq n-i$.

Hence
\begin{eqnarray}
 A= \left(
\begin{array}{cc}
{\cal{M}}_{n}(\delta_1,\delta_2,\cdots,\delta_n) &{\cal{M}}_{n,m}(\delta_{1+n},\delta_{2+n},\cdots,\delta_{n+m})\\
 0\in M_{m,n}(\F_q[Y,X])&Id_m
\end{array}
\right).
\label{F16}\end{eqnarray}

Then
\begin{eqnarray}
A\ ^t{\cal{M}}_{n+m}(\underline Y,\underline X)=[m_{i,j}]_{1\leq i,j\leq n}=:M,\ {\rm with}
\label{F17}\end{eqnarray}

\noindent  $m_{1,j}=0\  {\rm for}\  1\leq j\leq n+m-1,\  m_{1,n+m}=(-1)^{n+m-1}\Delta_{n+m}(\underline Y,\underline X),\ m_{2,1}=\Delta_{n+m}(\underline Y,\underline X), \ m_{2,j}=0\ {\rm for}\ 2\leq j\leq n+m\ {\rm and \  si} \ 3\leq i\leq n,\ 1\leq j\leq i-2,\ $
\begin{eqnarray}
m_{i,j}=\alpha_{i-j-1}^{q^{j-1}},{\rm with\ } 
\alpha_k:=
\delta_1^{q^{k+1}}Y_1+\cdots+\delta_n^{q^{k+1}}Y_n+\delta_{n+1}^{q^{k+1}}X_1+\cdots+\delta_{n+m}^{q^{k+1}}X_m,\label{F18}
\end{eqnarray}
$ m_{i,i-1}=\Delta_{n+m}(\underline Y,\underline X)^{q^i}\ and \ m_{i,j}=0\ {\rm for}\ i\leq j\leq n+m.$
 
 In matrix writing we have
 $M=\left(
\begin{array}{cc}
 M_1&M_2\\
 M_3&M_4
\end{array}
\right)$, where 

\noindent $M_1:=\left(
\begin{array}{ccccccc}
0&.&.&\cdots&.&0& 0  \\
\Delta_{n+m}(\underline Y,\underline X)&0&.&\cdots&.&0&0\\
\alpha_1&\Delta_{n+m}(\underline Y,\underline X)^q&0&\cdots&.&0&0  \\
\alpha_2&\alpha_1^q&\Delta_{n+m}(\underline Y,\underline X)^{q^2}&\cdots&.&0&0\\
\vdots&\vdots&\vdots&\cdots&.&\vdots&\vdots\\
\alpha_{n-2}&\alpha_{n-3}^q&.&\cdots&\alpha_1^{q^{n-3}}&\Delta_{n+m}(\underline Y,\underline X)^{q^{n-2}}&0

\end{array}
\right),$

\noindent $M_2:=\left(
\begin{array}{ccccc}
 0&0&.&.&(-1)^{m+n-1}\Delta_{n+m}(\underline Y,\underline X)\\
 0&0&.&.&0\\
 .&.&.&.& .\\
 0&0&.&.&0
\end{array}
\right)$, $M_3=\ ^t{\cal{M}}_{n,m}(X_1,X_2,\cdots,X_m),$

\noindent $M_4=\ ^t{\cal{M}}_{m}(X_1^{q^n},X_2^{q^n},\cdots,X_m^{q^n}).$

\end{thm}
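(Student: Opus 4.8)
The plan is to read off the product $A\,{}^t\mathcal{M}_{n+m}(\underline Y,\underline X)$ block by block, exploiting the two different shapes of the rows of $A$. Write $N:=n+m$ and unify the indeterminates as $Z_k:=Y_k$ for $1\le k\le n$ and $Z_{n+l}:=X_l$ for $1\le l\le m$, so that $\mathcal{M}_N(\underline Y,\underline X)=\mathcal{M}_N(\underline Z)$ has $(i,k)$-entry $Z_k^{q^{i-1}}$ and ${}^t\mathcal{M}_N(\underline Z)$ has $(k,j)$-entry $Z_k^{q^{j-1}}$. By \eqref{F16} the matrix $A$ decomposes into a top block of $n$ rows, namely the partial Moore matrix $\mathcal{M}_{n,N}(\delta_1,\dots,\delta_N)$ with $(i,k)$-entry $\delta_k^{q^{i-1}}$, stacked over the bottom block $[\,0\mid\mathrm{Id}_m\,]$. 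Hence $A\,{}^t\mathcal{M}_N$ is the stacking of the two corresponding products, and I would treat them separately.

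First I would dispose of the easy bottom block. Multiplying $[\,0\mid\mathrm{Id}_m\,]$ by ${}^t\mathcal{M}_N(\underline Z)$ simply extracts rows $n+1,\dots,n+m$ of ${}^t\mathcal{M}_N(\underline Z)$, whose $(i,j)$-entry is $Z_i^{q^{j-1}}=X_{i-n}^{q^{j-1}}$. Splitting these rows at column $n$, the columns $1\le j\le n$ give exactly $M_3={}^t\mathcal{M}_{n,m}(X_1,\dots,X_m)$, whose $(b,a)$-entry is $X_b^{q^{a-1}}$, and the columns $n+1\le j\le n+m$ give $M_4={}^t\mathcal{M}_m(X_1^{q^n},\dots,X_m^{q^n})$, since there the entry is $X_b^{q^{\,n+c-1}}$ with $c=j-n$. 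This identification is a direct comparison of entries and needs nothing beyond unraveling Definition \ref{defi2.1}.

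The substantive part is the top block, whose $(i,j)$-entry equals $\sum_{k=1}^{N}\delta_k^{q^{i-1}}Z_k^{q^{j-1}}$ for $1\le i\le n$, $1\le j\le N$. These are precisely the first $n$ rows of the full product $\mathcal{M}_N(\delta_1,\dots,\delta_N)\,{}^t\mathcal{M}_N(\underline Z)$, so one may invoke Theorem \ref{inverse} with $N$ in place of $n$ and read off rows $1,\dots,n$ of its right-hand side, which are exactly $[\,M_1\mid M_2\,]$. To keep this self-contained I would re-derive the needed identities: since $\delta_k^{q}$ is the cofactor of the $(1,k)$-entry $Z_k$ and $(-1)^{N-1}\delta_k$ the cofactor of the $(N,k)$-entry $Z_k^{q^{N-1}}$ in $\mathcal{M}_N(\underline Z)$, the Laplace expansion together with the vanishing of alien cofactor sums gives, for $0\le s\le N-1$, that $\sum_k\delta_k^{q}Z_k^{q^{s}}=\delta_{s,0}\,\Delta_N(\underline Z)$ and $\sum_k\delta_k Z_k^{q^{s}}=(-1)^{N-1}\delta_{s,N-1}\,\Delta_N(\underline Z)$. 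Applying the Frobenius $q^{r}$ to the first of these yields $\sum_k\delta_k^{q^{1+r}}Z_k^{q^{s+r}}=\delta_{s,0}\,\Delta_N(\underline Z)^{q^{r}}$; taking $r=i-2$ pins down, for $2\le i\le n$, the sub-diagonal entries $m_{i,i-1}=\Delta_N(\underline Z)^{q^{i-2}}$ and the vanishing $m_{i,j}=0$ for $i\le j\le N$, while the strictly sub-diagonal entries $m_{i,j}=\alpha_{i-j-1}^{q^{j-1}}$ for $1\le j\le i-2$ are a mere rewriting of the definition of $\alpha_k$ via Frobenius. The second relation, at $r=0$, gives the first row $m_{1,j}=(-1)^{N-1}\delta_{j,N}\,\Delta_N(\underline Z)$, which furnishes the single nonzero entry of $M_2$.

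The hard part is purely bookkeeping: keeping the Frobenius exponents and the signs straight across the two cofactor expansions, and above all checking that truncating $A$ to its first $n$ Moore rows does no harm. The point is that each $m_{i,j}$ is a complete cofactor sum over all $N$ columns $k$, so restricting to $i\le n$ never truncates a sum; the only place this matters is the $M_2$ region $j>n$, where $s=j-i+1\ge n-i+2\ge 2$ forces us into the alien-cofactor (vanishing) case for every $i\ge 2$, leaving $m_{1,n+m}=(-1)^{n+m-1}\Delta_{n+m}(\underline Y,\underline X)$ as the sole survivor. Assembling $M_1,M_2,M_3,M_4$ then reproduces \eqref{F17}.
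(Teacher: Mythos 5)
Your proposal is correct and, at its core, is the same proof as the paper's: after disposing of the identity rows (which give $M_3$, $M_4$ exactly as the paper does "immediately"), you derive the Laplace/cofactor identities for the $(n+m)$-variable Moore matrix — these are precisely the paper's relations \eqref{F24}--\eqref{F27} — and raise them to Frobenius powers $q^{i-1}$ to identify the rows $2\leq i\leq n$, with the definition of $\alpha_k$ handling the strictly sub-diagonal entries. The only difference is cosmetic: you first observe that the top block is rows $1,\dots,n$ of Theorem \ref{inverse} applied to $n+m$ indeterminates (a valid and tidy reduction, since the $\delta_k$ are exactly the first-row cofactors in the unified variables $Z_k$), but you then re-derive the needed identities anyway, landing on the paper's own argument.
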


\msn {\sl Proof of Theorem \ref{thm2}}
 
 \ms We can consider $\delta_i^q$ as the cofactor of $Y_i$ or $X_i$, in the Moore matrix ${\cal{M}}_n(\underline{Y},\underline{X})$, so we have the following formulas
 \begin{eqnarray}
  \delta_1^qY_1+\delta_2^qY_2+\cdots+\delta_n^qY_n+ \delta_{n+1}^qX_1+\delta_{n+2}^qX_2+\cdots+\delta_{n+m}^qX_m=\Delta_{n+m}(\underline Y,\underline X)\label{F24}
 \end{eqnarray}
 \begin{eqnarray}
  \delta_1^qY_1^{q^k}+\delta_2^qY_2^{q^k}+\cdots+\delta_n^qY_n^{q^k}+ \delta_{n+1}^qX_1^{q^k}+\delta_{n+2}^qX_2^{q^k}+\cdots+\delta_{n+m}^qX_m^{q^k}=0\label{F25}
 \end{eqnarray}
 for $1\leq k\leq n+m-1$.
 
 \ms We can also consider $\delta_i$ as the cofactor of $Y_i^{q^{n+m-1}}$ or of $X_i^{q^{n+m-1}}$ in the Moore matrix ${\cal{M}}_n(\underline{Y},\underline{X})$. We thus have the following formulas 

 \msn$ \delta_1Y_1^{q^{n+m-1}}+\delta_2Y_2^{q^{n+m-1}}+\cdots+\delta_nY_n^{q^{n+m-1}}+ \delta_{n+1}X_1^{q^{n+m-1}}+\cdots+\delta_{n+m}X_m^{q^{n+m-1}}=$
  \begin{eqnarray} =(-1)^{n+m-1}\Delta_{n+m}(\underline Y,\underline X)
 \label{F26}\end{eqnarray}
 \begin{eqnarray}
  \delta_1Y_1^{q^k}+\delta_2Y_2^{q^k}+\cdots+\delta_nY_n^{q^k}+ \delta_{n+1}X_1^{q^k}+\delta_{n+2}X_2^{q^k}+\cdots+\delta_{n+m}X_m^{q^k}=0
 \label{F27}\end{eqnarray}
 for $0\leq k\leq n+m-2$.

\ms It follows from the relations ({\ref{F16}}) and ({\ref{F17}}) that the first line of $A\ ^t{\cal{M}}_n(\underline{Y},\underline{X})$ is the same as the first line of $M=[m_{i,j}]_{1\leq i,j\leq n}.$

Then to show the equality between the lines of index $i$ with $2\leq i\leq n$, it is enough to raise the relations ({\ref{F24}}) and ({\ref{F25}}) to the power $q^{i-1}$ and to use the definition of $\alpha_k$ for $1\leq k\leq n-2$.

The equality between the lines of index $i$ with $n+1\leq i\leq n+m$ is immediate. All this shows the relation ({\ref{F17}}).

\begin{coro} Theorem \ref{thm1} is a consequence of the matrix equality in Theorem \ref{thm2}.
 
\end{coro}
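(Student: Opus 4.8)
The plan is to read off \eqref{F13} by simply taking determinants on both sides of the matrix identity \eqref{F17} of Theorem \ref{thm2}, namely $A\,{}^t{\cal M}_{n+m}(\underline Y,\underline X)=M$, and identifying the three resulting factors. First I would compute $\det A$: by \eqref{F16} the matrix $A$ is block upper triangular with diagonal blocks ${\cal M}_n(\delta_1,\dots,\delta_n)$ and $Id_m$, so $\det A=\Delta_n(\delta_1,\dots,\delta_n)$. Since $\delta_i=(-1)^{i-1}\Delta_{n+m-1}(\underline{\hat Y_i},\underline X)$ for $1\le i\le n$ and $(-1)^{i-1}=(-1)^{i+1}$, this determinant is exactly the left-hand side $\tilde\Delta_{n,m}(\underline Y,\underline X)$ of \eqref{F13}. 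For the other factor, transposition preserves determinants, so $\det{}^t{\cal M}_{n+m}(\underline Y,\underline X)=\Delta_{n+m}(\underline Y,\underline X)$; I abbreviate $\Delta:=\Delta_{n+m}(\underline Y,\underline X)$.

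The core of the argument is the evaluation of $\det M$, which I would carry out by expanding along the first row. From the shapes of $M_1$ and $M_2$ this row has a single nonzero entry, $m_{1,n+m}=(-1)^{n+m-1}\Delta$ in the last column; the cofactor sign $(-1)^{1+(n+m)}$ combines with $(-1)^{n+m-1}$ to give $+1$, so $\det M=\Delta\cdot\det M'$, where $M'$ is obtained by deleting the first row and the last column. The key structural point is that $M'$ is block lower triangular: after the deletions the rows coming from $M_1,M_2$ (original rows $2,\dots,n$) have all entries in columns $n,\dots,n+m-1$ equal to $0$, because the last column of $M_1$ vanishes and $M_2$ is supported on its first row alone. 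Its top-left $(n-1)\times(n-1)$ block is then the lower triangular block read off the subdiagonal of $M_1$, with diagonal $\Delta,\Delta^q,\dots,\Delta^{q^{n-2}}$ and determinant $\Delta^{1+q+\cdots+q^{n-2}}$. Its bottom-right $m\times m$ block consists of the surviving columns $n,\dots,n+m-1$ of the $X$-rows $[M_3\,|\,M_4]$, which carry the powers $X_k^{q^{n-1}},\dots,X_k^{q^{n+m-2}}$ and hence form ${}^t{\cal M}_m(X_1^{q^{n-1}},\dots,X_m^{q^{n-1}})$; its determinant is $\Delta_m(X_1^{q^{n-1}},\dots,X_m^{q^{n-1}})=\Delta_m(\underline X)^{q^{n-1}}$, using that the Frobenius is a ring homomorphism, so $\Delta_m(\underline a^{q^{n-1}})=\Delta_m(\underline a)^{q^{n-1}}$ in characteristic $p$. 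Multiplying the two diagonal blocks gives $\det M=\Delta\cdot\Delta^{1+q+\cdots+q^{n-2}}\Delta_m(\underline X)^{q^{n-1}}$.

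Putting the three computations together, the determinant of \eqref{F17} reads $\tilde\Delta_{n,m}(\underline Y,\underline X)\cdot\Delta=\Delta^{1+(1+q+\cdots+q^{n-2})}\Delta_m(\underline X)^{q^{n-1}}$. Since $\F_q[\underline Y,\underline X]$ is an integral domain and $\Delta\neq0$, I cancel one factor of $\Delta$ to obtain $\tilde\Delta_{n,m}(\underline Y,\underline X)=\Delta_m(\underline X)^{q^{n-1}}\Delta^{1+q+\cdots+q^{n-2}}$, which is precisely \eqref{F13} for $m\ge1$; the case $m=0$ is the identical determinant computation applied to the matrix equality \eqref{FF16} of Theorem \ref{inverse}. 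I expect the only delicate points to be clerical rather than conceptual: pinning down the cofactor sign in the first-row expansion and verifying that the off-diagonal block of $M'$ truly vanishes, so that $\det M'$ factors as the product of its two diagonal blocks. Once that block-triangular shape is confirmed, the rest is merely reading off the diagonal entries and tracking the exponents of $\Delta$.
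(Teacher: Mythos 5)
Your proof is correct and is essentially the paper's own argument: take determinants on both sides of \eqref{F17}, evaluate $\det M$ by expanding along the first row and exploiting the block-lower-triangular shape of the remaining matrix (the paper's $N$, with diagonal blocks giving $\Delta_{n+m}(\underline Y,\underline X)^{1+q+\cdots+q^{n-2}}$ and $\Delta_m(\underline X)^{q^{n-1}}$), evaluate $\det A=\Delta_n(\delta_1,\dots,\delta_n)$ from the block-upper-triangular form \eqref{F16}, and cancel the nonzero factor $\Delta_{n+m}(\underline Y,\underline X)$ in the integral domain $\F_q[\underline Y,\underline X]$. The only divergence is the case $m=0$: the paper's proof of this corollary gets it by comparing top-degree coefficients in $X_1$ in the $m=1$ identity, while you take determinants in \eqref{FF16} of Theorem \ref{inverse} --- which is exactly how the paper itself concludes the $m=0$ case at the end of Section 4.4, so both routes are equally valid.
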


\begin{proof}
 Expanding the determinant of $M$ according to the first line, we have
 \begin{eqnarray}
  \det M=\Delta_{n+m}(\underline Y,\underline X)\det N
  \ {\rm with}\ N=\left(
\begin{array}{cc}
 N_1&N_2\\
 N_3&N_4
\end{array}
\right)
 \label{F19} \end{eqnarray}
  \noindent $N_1=\left(
\begin{array}{cccccc}
\Delta_{n+m}(\underline Y,\underline X)&0&.&\cdots&.&0\\
\alpha_1&\Delta_{n+m}(\underline Y,\underline X)^q&0&\cdots&.&0  \\
\alpha_2&\alpha_1^q&\Delta_{n+m}(\underline Y,\underline X)^{q^2}&\cdots&.&0\\
\vdots&\vdots&\vdots&\cdots&.&\vdots\\
\alpha_{n-2}&\alpha_{n-3}^q&.&\cdots&\alpha_1^{q^{n-3}}&\Delta_{n+m}(\underline Y,\underline X)^{q^{n-2}}

\end{array}
\right),$

\msn
$N_2$ is the zero matrix in $M_{n-1,m-1}(\F_q[\underline Y,\underline X]),\ N_3=\ ^t{\cal{M}}_{n-1,m}(X_1,X_2,\cdots,X_m),$

\noindent 
$N_4=\ ^t{\cal{M}}_{m}(X_1^{q^{n-1}},X_2^{q^{n-1}},\cdots,X_m^{q^{n-1}}).$
 
It is then clear that  
\begin{eqnarray}
 \det N=\Delta_{n+m}(\underline Y,\underline X)^{1+q+\cdots+q^{n-2}}\Delta_{m}(\underline X)^{q^{n-1}}.
\label{F20}\end{eqnarray}

Thus with (\ref{F19}) and  (\ref{F20}), one gets
\begin{eqnarray}
 \det M=\Delta_{n+m}(\underline Y,\underline X)\Delta_{n+m}(\underline Y,\underline X)^{1+q+\cdots+q^{n-2}}\Delta_{m}(\underline X)^{q^{n-1}}.
\label{F21}\end{eqnarray}

It follows from (\ref{F17}), that $\det M=\Delta_{n+m}(\underline Y,\underline X)\det A$ and from (\ref{F6}) that $\det A=\Delta_{n}(\delta_1,\delta_2,\cdots,\delta_n)$, thus 
\begin{eqnarray}
 \det M=\Delta_{n+m}(\underline Y,\underline X)\Delta_{n}(\delta_1,\delta_2,\cdots,\delta_n).\label{F22}
\end{eqnarray}

Since (cf. Proposition \ref{prop2.1}),
$\Delta_{n+m}(\underline Y,\underline X)\neq 0$, and that $\F_q[(\underline Y,\underline X)$ is an integral  ring, the equality (\ref{F13}) in Theorem \ref{thm1} for $m\geq 1$. follows from  (\ref{F21}) and  (\ref{F22}).

 Finally, the equality of the coefficients of highest degree in $X_1$ in
 formula (\ref{F13}) in Theorem \ref{thm1} for $m=1$ gives, as it is noticed in the first proof,
formula (\ref{F13}) in Theorem \ref{thm1} for $m=0$.
\end{proof}

\section{Two illustrations of M-O.Ore determinants}

\subsection{The map $(a_1,\cdots,a_n)\in K^n\to (\Delta_{n-1}(\underline{\hat{a_i}}))_{1\leq i\leq n}\in K^n$}
\begin{prop}\label{prop5.1}
Let $K$ be an algebraically closed field with characteristic  $p>0$. Let us denote by $V(\Delta_n):=\{(a_1,a_2,\cdots,a_n):=\underline a\in K^n\ |\  
 \Delta_n(\underline a)= 0\}$. 
The map $\varphi: \underline a:=(a_1,a_2,\cdots,a_n)\in K^n\to (\Delta_{n-1}(\underline{\hat{a_i}}))_{1\leq i\leq n}\in K^n$ induces an onto map from  $K^n-V(\Delta_n)$ to itself. Moreover for $\underline a$ and $\underline{a^\prime}$ in 
 $K^n-V(\Delta_n)$, one has $\varphi(\underline a)=\varphi(\underline {a^\prime})$
if and only if $\underline{a^\prime}=\lambda \underline{a}$ where 
$\lambda^{1+q+\cdots+q^{n-2}}=1$. 
\end{prop}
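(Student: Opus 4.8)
The plan is to analyze the map $\varphi$ through the theory already developed, exploiting the fact that the cofactors $\Delta_{n-1}(\underline{\hat a_i})$ encode the kernel of the reversed polynomial. First I would establish that $\varphi$ maps $K^n - V(\Delta_n)$ into itself: by the Remark following Proposition \ref{nprop2.3}, if $a_1,\dots,a_n$ are $\F_q$-linearly independent (equivalently $\Delta_n(\underline a)\neq 0$ by Proposition \ref{prop2.1}), then the cofactors $\Delta_{n-1}(\underline{\hat a_i})$ are also $\F_q$-linearly independent, so $\Delta_n(\varphi(\underline a))\neq 0$ again by Proposition \ref{prop2.1}. Thus $\varphi$ restricts to a self-map of $K^n - V(\Delta_n)$.

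For the fibre description, the key observation is the homogeneity of the cofactors: each $\Delta_{n-1}(\underline{\hat a_i})$ is homogeneous of degree $1+q+\cdots+q^{n-2}$ in $\underline a$ (it is the determinant of an $(n-1)\times(n-1)$ Moore matrix whose rows carry exponents $1,q,\dots,q^{n-2}$). Hence for $\lambda\in K^\star$ with $\lambda^{1+q+\cdots+q^{n-2}}=1$ we immediately get $\varphi(\lambda\underline a)=\varphi(\underline a)$, giving the easy direction. For the converse, suppose $\varphi(\underline a)=\varphi(\underline{a'})$. The plan is to recover $\underline a$ (up to such a scalar) from its image. I would use Proposition \ref{nprop2.3}: writing $\hat W:=\bigoplus_i \F_q\Delta_{n-1}(\underline{\hat a_i})$ and $W:=\bigoplus_i\F_q a_i$, the space $\Ker \rho P_W$ equals $c_n^{-1}(\hat W/\Delta_n(\underline a))^q$, so $\hat W$ determines $W$ via the (involutive up to the explicit normalization) passage through reversed polynomials. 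Since $\varphi(\underline a)=\varphi(\underline{a'})$ forces $\hat W=\hat W'$ as ordered tuples, and by Proposition \ref{Nprop2.4} (or Theorem \ref{thm1}) the scaling factor $\Delta_n(\underline a)$ is controlled, I would deduce that $W=W'$ as subspaces and that the ordered bases $\underline a,\underline{a'}$ agree up to a common scalar $\lambda$.

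The cleanest route for the converse is to compute $\varphi\circ\varphi$ explicitly and show it is a scalar multiplication. Applying the cofactor construction twice and using Remark \ref{rem2.2}, equation \eqref{F8}, one expresses $\Delta_{n-1}(\widehat{\varphi(\underline a)}_i)$ in terms of the original $a_i$ and a power of $\Delta_n(\underline a)$; concretely the second iterate should return $\underline a$ scaled by an explicit power of $\Delta_n(\underline a)$ times a sign. From $\varphi(\underline a)=\varphi(\underline{a'})$ one then gets $\varphi(\varphi(\underline a))=\varphi(\varphi(\underline{a'}))$, i.e. $c\,\underline a = c'\,\underline{a'}$ with $c,c'$ explicit scalars depending only on $\Delta_n(\underline a),\Delta_n(\underline{a'})$. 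This yields $\underline{a'}=\lambda\underline a$ for some $\lambda\in K^\star$, and substituting back into $\varphi(\underline a)=\varphi(\lambda\underline a)=\lambda^{1+q+\cdots+q^{n-2}}\varphi(\underline a)$ forces $\lambda^{1+q+\cdots+q^{n-2}}=1$, since $\varphi(\underline a)\neq 0$.

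Finally, for surjectivity I would argue that since $K$ is algebraically closed and the fibres of $\varphi$ over $K^n-V(\Delta_n)$ are finite (of size dividing $1+q+\cdots+q^{n-2}$, being a torsor under the finite group of those roots of unity), the map is quasi-finite; combined with the fact that $\varphi$ is given by homogeneous polynomials of the same degree defining a finite morphism of the quasi-affine variety $K^n-V(\Delta_n)$ to itself, dominance plus properness of the induced map on the projectivization forces surjectivity. Alternatively, and more in the spirit of the paper, surjectivity follows directly from the near-involutivity of $\varphi$: the explicit formula $\varphi(\varphi(\underline a))=(\text{scalar})\cdot\underline a$ shows every $\underline a\in K^n-V(\Delta_n)$ lies in the image (after absorbing the scalar, which is a nonzero $(1+q+\cdots+q^{n-2})$-th power and hence can be undone using that $K$ is algebraically closed). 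The main obstacle I anticipate is pinning down the exact scalar and sign in the formula for $\varphi\circ\varphi$; this is where equations \eqref{F4}, \eqref{F7} and \eqref{F8} must be combined carefully, and where the hypothesis $n\geq 2$ (so the exponent $1+q+\cdots+q^{n-2}$ is a genuine nonconstant) is used.
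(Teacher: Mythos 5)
Your overall strategy coincides with the paper's: both hinge on computing the second iterate $\varphi\circ\varphi$ explicitly, deducing surjectivity by absorbing the resulting scalar (using that $K$ is algebraically closed) and deducing the fibre description by applying $\varphi$ once more to the equality $\varphi(\underline a)=\varphi(\underline{a'})$; the stability of $K^n-V(\Delta_n)$ and the easy direction via homogeneity are also handled as in the paper. However, there are two concrete problems in the step you yourself flag as the main obstacle. First, your claimed shape of the formula, $\varphi(\varphi(\underline a))=(\text{scalar})\cdot\underline a$, is wrong: the second iterate carries a Frobenius twist. The correct identity (this is \eqref{NF16} at rank $n$, from the first proof of Theorem \ref{thm1}, and it is exactly what the paper invokes) reads $\varphi^2(\underline a)=(-1)^{\left\lfloor\frac{n-1}{2}\right\rfloor}\Delta_n(\underline a)^{1+q+\cdots+q^{n-3}}\,(\underline a)^{q^{n-2}}$, i.e.\ a scalar times the coordinatewise $q^{n-2}$-th power. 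This is repairable --- Frobenius is bijective on $K$, so from $c\,\underline a^{q^{n-2}}=c'\,\underline{a'}^{q^{n-2}}$ you can still extract $q^{n-2}$-th roots to get $\underline{a'}=\lambda\underline a$, and for surjectivity you first pick $\underline a$ whose $q^{n-2}$-th power is the target --- but the twist must be carried through both arguments rather than ignored.

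Second, and more seriously, the tools you point to for pinning down the scalar and the sign, namely \eqref{F4}, \eqref{F7} and \eqref{F8}, cannot deliver this formula. The coordinates of $\varphi^2(\underline a)$ are the column-deleted minors $\Delta_{n-1}(\underline{\hat b_i})$ of the cofactor vector $\underline b:=(\Delta_{n-1}(\underline{\hat a_j}))_{1\leq j\leq n}$ (determinants of size $n-1$), whereas \eqref{F8} computes the row-deleted minors $\Delta[m](\underline b)$ (determinants of size $n$ with the Frobenius row $F^m$ omitted), and \eqref{F4}, \eqref{F7} only give the full determinant $\Delta_n(\underline b)$ --- none of these are the quantities needed. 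Likewise, your vector-space route through Proposition \ref{nprop2.3} identifies the $\F_q$-span of $\varphi^2(\underline a)$ but not the individual coordinates, which is too weak to conclude coordinatewise proportionality $\underline{a'}=\lambda\underline a$ with a single $\lambda$. The missing ingredient is precisely the coordinate-level identity \eqref{NF16}; once you replace the appeal to Remark \ref{rem2.2} by an appeal to \eqref{NF16}, your argument becomes the paper's proof. (Your alternative surjectivity argument via properness of a projectivized finite morphism is under-justified as stated, but it becomes unnecessary at that point.)
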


\begin{proof} Let $(a_1,a_2,\cdots,a_n)\in K^n-V(\Delta_n)$ and $b_i:=\Delta_{n-1}(\underline{\hat{a_i}})$ for $1\leq i\leq n$. Since $\Delta_n(\underline b)=$

\noindent $(-1)^{\left\lfloor\frac{n}{2}\right\rfloor}\Delta_n(\underline a)^{1+q+\cdots+q^{n-2}}$ (we recognize \eqref{F13} for $m=0$), it follows that $\varphi(K^n-V(\Delta_n))\subset K^n-V(\Delta_n)$. Then
$\varphi^2(\underline a)=(\Delta_{n-1}(\underline{\hat{b_i}}))_{1\leq i\leq n}=(\Delta_{n-1}(\Delta_{n-1}(\underline{\hat{a_1}}),\cdots,\Delta_{n-1}(\underline{\hat{a_{i-1}}}),\Delta_{n-1}(\underline{\hat{a_{i+1}}}),\cdots,\Delta_{n-1}(\underline{\hat{a_n}})))_{1\leq i\leq n}=$

\noindent
$=((-1)^{\left\lfloor\frac{n-1}{2}\right\rfloor}\Delta_n(\underline{a})^{1+q+\cdots+q^{n-3}}a_i^{q^{n-2}})_{1\leq i\leq n}$ we recognize there the equality \eqref{NF16} at rank $n$ obtained in the first proof of Theorem \ref{thm1}. It follows that 
$$\varphi^2(\underline a)=(-1)^{\left\lfloor\frac{n-1}{2}\right\rfloor}\Delta_n(\underline{a})^{1+q+\cdots+q^{n-3}}(\underline a)^{q^{n-2}}.$$

Let $\lambda\in K-\{0\}$, then $$\varphi^2(\underline{\lambda a})=\lambda^{(1+q+\cdots+q^{n-1})(1+q+\cdots+q^{n-3})+q^{n-2}}\varphi^2(\underline{a}).$$ 
Note that we have the equality $(1+q+\cdots+q^{n-1})(1+q+\cdots+q^{n-3})+q^{n-2}=(1+q+\cdots+q^{n-2})^2$ according to the fact that $\varphi(\underline{\lambda a})=\lambda^{1+q+\cdots+q^{n-2}}\varphi(\underline{a})$. Thus by taking 
$\lambda$ with $\lambda^{(1+q+\cdots+q^{n-1})^2}=\Delta_n(\underline{a})^{-(1+q+\cdots+q^{n-3})}$, one gets $\varphi^2(\underline{\lambda a})=(\underline{a})^{q^{n-2}}$. Hence the surjectivity of $\varphi^2$ and therefore of $\varphi$. 

Let us now examine the injectivity defect of the map $\varphi$.

Let $\underline a$ and $\underline{a^\prime}$ be 
in $K^n-V(\Delta_n)$ such that $\varphi(\underline a)=\varphi(\underline {a^\prime})$, then $\varphi^2(\underline a)=\varphi^2(\underline {a^\prime})$ and so
$\Delta_n(\underline{a})^{1+q+\cdots+q^{n-3}}a_i^{q^{n-2}}$

\noindent
$=\Delta_n(\underline{a^\prime})^{1+q+\cdots+q^{n-3}}{a^\prime_i}^{q^{n-2}}$. Thus there is $\lambda \in K$ such that $\underline{a^\prime}=\lambda \underline{a}$, hence  $\lambda^{1+q+\cdots+q^{n-2}}\varphi(\underline{a})=\varphi(\underline{a})$ and so $\lambda^{1+q+\cdots+q^{n-2}}=1$. The converse is immediate.
 
\end{proof}

\begin{rem} Proposition \ref{prop5.1} works the same if we replace the map $\varphi $ by the map $\varphi_1 $ where
$\varphi_1: \underline a:=(a_1,a_2,\cdots,a_n)\in K^n\to ((-1)^{i-1}\Delta_{n-1}(\underline{\hat{a_i}}))_{1\leq i\leq n}\in K^n$ as $\varphi^2=\varphi_1^2$.
\end{rem}

\subsection{On $K$-etale algebras and Elkies pairing}

In this paragraph, unless expressly mentionned $K$ is a field of characteristic $p>0$, $K^{alg}$ is an algebraic closure of $K$ and $F$ is the Frobenius automorphism defined by $F(x)=x^p$ for $x\in K^{alg}$. 

Let $\underline f:=(f_1,f_2,\cdots,f_n)\in K^n$ with $\Delta_n(\underline f)\neq 0$, i.e. $f_1,f_2,\cdots,f_n$ are $\F_p$ free. 
We intend to study the $K$-algebra $A:=\frac{K[W_i,1\leq i\leq n]}{(W_i^p-W_i-f_i)_{ 1\leq i\leq n}}$, in particular its group of $K$-automorphisms $\Aut_K A$, and to exhibit a special generator of the $K$-algebra $A$ and a subgroup $(\Z/p\Z)^n\subset \Aut_K A $ whose action on $A$ is dictated by an associated  Elkies pairing (Section 3.2.A.).

\begin{prop} \label{prop5.2}
Let $n\geq 1$ and $\underline f:=(f_1,f_2,\cdots,f_n)\in K^n$  where
$\Delta_n(\underline f)\neq 0$.  Let $V$ be the $\F_p$-vector space  $\frac{(\sum_{1\leq i\leq n}\F_pf_i)+(F-Id)(K)}{(F-Id)(K)}$ of dimension  $r\leq n$ and $I\sqcup J$ be a partition of  $\{1,2,\cdots,n\}$ such that $f_{i},\ i\in I$ induces an $\F_p$-basis of the vector space $V$.
Let $A$ be the $K$-algebra $\frac{K[W_k,1\leq k\leq n]}{(P_k)_{ 1\leq k\leq n}}$ where $P_k:=W_k^p-W_k-f_k$, then
$A$ is an etale $K$-algebra isomorphic to $L^{p^{n-r}}$, the cartesian product of $p^{n-r}$ copies of $L$,  where
$L\subset K^{alg}$ is a field which is a Galois extension of $K$ of group $(\Z/p\Z)^r$ and $L\simeq \frac{K[W_{k},\ k\in I]}{(P_k)_{k\in I}}$.

The group of $K$-automorphisms $\Aut_K A$ is then 
isomorphic to a semidirect product of the groups $\mathfrak S_{p^{n-r}}$ and $((\Z/p\Z)^r)^{p^{n-r}}$.

Moreover if $w_i$ denotes the canonical image of $W_i$ in $A$ and if
 
 \begin{eqnarray}
w:= \delta_{\underline{w}}(\underline{f}):=\left|
\begin{array}{cccc}
w_1 &w_2&\cdots&w_n\\
f_1&f_2&\cdots&f_n  \\
f_1^{p}&f_2^{p}&\cdots&f_n^p   \\
\vdots&\vdots&\cdots&\vdots\\   
f_1^{p^{n-2}}&f_2^{p^{n-2}}&\cdots&f_n^{p^{n-2}} 
\end{array}
\right|\in K[w_1,w_2,...,w_n], \label{F28}
\end{eqnarray} then

\noindent $w_i=\frac{\Delta_n(\Delta_{n-1}(\underline{\hat{f}_1}),\cdots,(-1)^{i-2}\Delta_{n-1}(\underline{\hat{f}_{i-1}}),w,(-1)^{i}\Delta_{n-1}(\underline{\hat{f}_{i+1}}),\cdots, (-1)^{n-1}\Delta_{n-1}(\underline{\hat{f}_n}) )}{\Delta_n(\Delta_{n-1}(\underline{\hat{f}_1}),\cdots,(-1)^{j-1}\Delta_{n-1}(\underline{\hat{f}_{j}}),\cdots, (-1)^{n-1}\Delta_{n-1}(\underline{\hat{f}_n}) )}-(f_i+f_i^p+...+f_i^{p^{n-2}})\in K[w]
$, where  $\Delta_{n-1}(\underline{\hat{f}_i})=\Delta_{n-1}(f_1,f_2,\cdots, f_{i-1},f_{i+1},\cdots, f_{n})$ and 

\begin{eqnarray} A=K[w_1,w_2,\cdots,w_n]=K[w]\simeq \frac{K[W]}{(Q(W))}\label{F29}
\end{eqnarray}
\noindent where 

\noindent $Q(W)=\frac{\Delta_{n+1}(\Delta_{n-1}(\underline{\hat{f}_1}),\cdots,\Delta_{n-1}(\underline{\hat{f}_n}),W)}{\Delta_{n}(\Delta_{n-1}(\underline{\hat{f}_1}),\cdots,\Delta_{n-1}(\underline{\hat{f}_n}))  }-\Delta_n(\underline f)^{p^{n-1}}=$
 
\noindent $W^{p^n}+(\sum_{1\leq i\leq n-1}(-1)^{n-i}\Delta_n(\underline f)^{p^{n-1}-p^{i-1}-p^i}(\Delta[n-i](\underline f))^{p^{i-1}}W^{p^{n-i}}) +(-1)^n\Delta_n(\underline f)^{p^{n-1}-1}W-\Delta_n(\underline f)^{p^{n-1}},$

\noindent $\Delta[i](\underline f):= \det (\underline f,F(\underline f),\cdots,\hat F^i(\underline f),\cdots,F^{n}(\underline f))$ 
and  $Q(w)=0$. 

\end{prop}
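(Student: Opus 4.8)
The plan is to establish the three assertions in turn: the \'etale structure together with its decomposition, the automorphism group, and finally the distinguished generator $w$ with its minimal polynomial $Q$, this last being the substantive point.

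For the structural claims I would argue as follows. Each $P_k=W_k^p-W_k-f_k$ has derivative $-1$, hence is separable, so $A=\bigotimes_{k=1}^n K[W_k]/(P_k)$ is a tensor product of \'etale $K$-algebras and is \'etale of dimension $p^n$. To obtain $A\cong L^{p^{n-r}}$ I invoke Artin--Schreier theory: with $\wp:=F-\mathrm{Id}$, the compositum $L:=K(\wp^{-1}(f_i):i\in I)$ is Galois over $K$ with group $(\Z/p\Z)^r$ and, by $\F_p$-linear disjointness of the independent classes $f_i$, $\bigotimes_{i\in I}K[W_i]/(P_i)\cong L$. For $j\in J$ one writes $f_j=\sum_{i\in I}c_{ji}f_i+\wp(g_j)$ with $c_{ji}\in\F_p$, $g_j\in K$, so that $\theta_j:=w_j-\sum_{i\in I}c_{ji}w_i-g_j$ satisfies $\theta_j^p=\theta_j$; the $\F_p$-algebra they generate is $\cong\F_p^{p^{n-r}}$, whence $A\cong L\otimes_{\F_p}\F_p^{p^{n-r}}\cong L^{p^{n-r}}$, the dimension count $p^r\cdot p^{n-r}=p^n$ confirming there is no collapse. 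For the automorphism group, any $K$-automorphism of $L^{N}$ with $N:=p^{n-r}$ permutes the $N$ primitive idempotents; since all factors are $K$-isomorphic to $L$ every permutation occurs, and on each factor the automorphism acts through $\Aut_K(L)=(\Z/p\Z)^r$. This yields the wreath product $((\Z/p\Z)^r)^{N}\rtimes\mathfrak S_{N}$ as claimed.

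The heart is the generator $w$. Expanding the determinant \eqref{F28} along its first row gives $w=\sum_{i=1}^n\beta_i w_i$ with $\beta_i:=(-1)^{i-1}\Delta_{n-1}(\underline{\hat f_i})$; these $\beta_i$ are $\F_p$-independent (Remark following Proposition \ref{nprop2.3}), so $\Delta_n(\beta_1,\dots,\beta_n)\neq0$ by Proposition \ref{prop2.1}. I would then recover the $w_i$ by Cramer's rule for the Moore matrix $\mathcal M_n(\beta_1,\dots,\beta_n)$. Using the Artin--Schreier telescoping $w_i^{p^{n-1}}=w_i+(f_i+\cdots+f_i^{p^{n-2}})$, I claim that $(w_i^{p^{n-1}})_{i}$ solves the linear system $w^{p^s}=\sum_i\beta_i^{p^s}x_i$ for $0\le s\le n-1$. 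Indeed $w^{p^s}-\sum_i\beta_i^{p^s}w_i^{p^{n-1}}=-\sum_{l=s}^{n-2}\bigl(\sum_i\beta_i^{p^s}f_i^{p^l}\bigr)$, and each inner sum $\sum_i\beta_i^{p^s}f_i^{p^l}$ is exactly the $(s,l)$-entry of $\mathcal M_n(\beta_1,\dots,\beta_n)\,{}^t\mathcal M_n(\underline f)$ evaluated in Theorem \ref{inverse}; that theorem shows these entries vanish for $s\le l\le n-2$, so the residual is $0$. Cramer's rule then gives $w_i^{p^{n-1}}=\Delta_n(\beta_1,\dots,\beta_{i-1},w,\beta_{i+1},\dots,\beta_n)/\Delta_n(\beta_1,\dots,\beta_n)$, which is precisely the displayed expression for $w_i$; in particular each $w_i$ lies in $K[w]$, so $A=K[w]$.

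It remains to identify $Q$. Writing $\hat W:=\bigoplus_i\F_p\Delta_{n-1}(\underline{\hat f_i})$, Proposition \ref{prop2.2} gives $P_{\hat W}(X)=\Delta_{n+1}(\beta_1,\dots,\beta_n,X)/\Delta_n(\beta_1,\dots,\beta_n)$ as the $\F_p$-linear polynomial vanishing on $\hat W$, so $Q(X)=P_{\hat W}(X)-\Delta_n(\underline f)^{p^{n-1}}$. To prove $Q(w)=0$ I push the previous computation one row further: in $\Delta_{n+1}(\beta_1,\dots,\beta_n,w)$ the last column $(w^{p^s})_{0\le s\le n}$ equals $\sum_i w_i^{p^{n-1}}(\beta_i^{p^s})_s$ plus a single correction $\Delta_n(\underline f)^{p^{n-1}}$ in row $n$, coming from $\sum_i\beta_i^{p^n}f_i^{p^{n-1}}=(\sum_i\beta_i^{p}f_i)^{p^{n-1}}=\Delta_n(\underline f)^{p^{n-1}}$ (again the relevant entry of Theorem \ref{inverse}). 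As the first summand is a combination of the other $n$ columns, multilinearity yields $\Delta_{n+1}(\beta_1,\dots,\beta_n,w)=\Delta_n(\underline f)^{p^{n-1}}\Delta_n(\beta_1,\dots,\beta_n)$, i.e. $P_{\hat W}(w)=\Delta_n(\underline f)^{p^{n-1}}$ and $Q(w)=0$. Since $\deg Q=p^{\dim_{\F_p}\hat W}=p^n=\dim_K A$, $Q$ is the minimal polynomial of $w$ and $A\cong K[W]/(Q)$; the explicit expansion of $Q$ into the powers $W^{p^{n-i}}$ then follows by inserting the coefficients $\hat c_m$ of $P_{\hat W}$ provided by Proposition \ref{Nprop2.4} and Remark \ref{rem2.2} (equations \eqref{F7}--\eqref{F8}) and subtracting the constant. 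The main obstacle is precisely the bookkeeping around Theorem \ref{inverse}: recognizing the sums $\sum_i\beta_i^{p^s}f_i^{p^l}$ as the entries of that comatrix-type product and isolating the lone surviving term in row $n$; once this identification is in hand, Cramer's rule and column multilinearity carry the argument.
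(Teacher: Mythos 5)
Your proof is correct and takes essentially the same route as the paper's: your $w_i^{p^{n-1}}$ is exactly the paper's auxiliary element $v_i=w_i+(f_i+f_i^p+\cdots+f_i^{p^{n-2}})$, and your linear system solved by Cramer's rule plus the column relation in the $(n+1)\times(n+1)$ Moore-type determinant reproduce the paper's computation verbatim (the paper checks the needed vanishing sums by direct cofactor expansions, which are precisely the identities \eqref{FF17}--\eqref{FF20} underlying the Theorem \ref{inverse} entries you cite). The structural parts (Artin--Schreier decomposition $A\simeq L^{p^{n-r}}$ via the change of variables on the indices in $J$, and the wreath-product description of $\Aut_K A$) also match the paper's arguments, and, like the paper, you defer the explicit coefficient expansion of $Q$ to the earlier Elkies-based formulas.
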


\begin{proof}
 i) Let us show that $A$ is isomorphic to the $K$-algebra $L^{p^{n-r}}.$

By definition of $I$, we have $V=\frac{(\sum_{i\in I}\F_pf_i)+(F-Id)(K)}{(F-Id)(K)}$. 
The Artin-Schreier theory 
(\cite{B} chap.5 p.88 \S .11 Theorem 5) 
says that
 $(F-Id)^{-1}(\sum_{i\in I}\F_pf_i+(F-Id)(K))\subset K^{alg}$ is a Galois extension 
$L/K$ of group $Hom(V,\F_p)\simeq (\Z/p\Z)^r$ and that 
\begin{eqnarray}
L=\bigoplus_{0\leq \alpha_i<p,\ i\in I} K \prod_{i\in I}x_i^{\alpha_i}\ {\rm where}\ x_i\in K^{alg}\ {\rm and}\  P_i(x_i)=0. \label{38}
\end{eqnarray}
Let $\pi$ be the $K$-algebra homomorphism of $K[W_i,\ i\in I]$ onto $L$ mapping $\pi(W_i)$ to $x_i$. It follows from \eqref{38} that $\pi$   induces a $K$-algebra homomorphism $\pi': \frac{K[W_{i},\ i\in I]}{(P_i)_{i\in I}}=K[w_i,\ i\in I]\to L$ which is surjective and as $K[w_i,\ i\in I]=\sum_{0\leq \alpha_i<p,\ i\in I} K \prod_{i\in I}w_i^{\alpha_i}$ we get a $K$-algebra isomorphism 
\begin{eqnarray}
\frac{K[W_{i},\ i\in I]}{(P_i)_{i\in I}}\simeq L.\label{39}
\end{eqnarray}

On the other hand  we have for $j\in J$, $f_j=\sum_{i\in I}\lambda_{j,i}f_{i}+g_j^p-g_j$
with $\lambda_{j,i}\in \F_p$ and $g_j\in K$. Thus for $j\in J$, and if $W'_j:=W_j+\sum_{i\in I}\lambda_{j,i}W_{i}$, one gets 
$K[W_k,1\leq k\leq n]= K[W_{i},\ i\in I, W'_j,\ j\in J ]$ and for 
$j\in J$ one has 
${W'_j}^p-W'_j-(g_j^p-g_j)={W_j}^p-W_j-f_j+\sum_{i\in I}\lambda_{j,i}({W_{i}}^p-W_{i}-f_i)$ and so if $P'_j(W'_j):={W'_j}^p-W'_j-(g_j^p-g_j)$ we have 
$A\simeq \frac{K[W_i,\ i\in I,W'_j\ j\in J]}{(P_i,\ i\in I,P'_j\ j\in J)}$.

Now we can apply the following general lemma 

\begin{lem} \label{lem5.1}
 Let $K$ be any field (no condition on the characteristic) and $K^{alg}$ an algebraic closure. 
 
 Let $n\geq 1$ and for $1\leq k\leq n$, $P_k\in K[W_k]$ be a non constant polynomial. Let $A$ be the 
$K$-algebra $\frac{K[W_k,1\leq k\leq n])}{(P_k)_{ 1\leq k\leq n}}=K[w_k,1\leq k\leq n]$ where $w_k$ is the canonical image of $W_k$. 

Let  $I\sqcup J$ be a partition of  $\{1,2,\cdots,n\}$ and  $B$ be the $K$-algebra $B:=\frac{K[W_k,k\in I]}{(P_k)_{ k\in I}}$. Let $u:K[W_k,k\in I]\to A$ be the $K$-homomorphism with $u(W_k)=w_k$ for $k\in I$ then $\ker u=\sum_{i\in I}P_i K[W_k,k\in I]$ and $u$ induces on one side an isomorphism between the two  $K$-algebras $B$ and $K[w_k,k\in I]\subset A$ and on the other side an isomorphism between the two  $K$-algebras $ \frac{B[W_k,k\in J]}{(P_k)_{k\in J}}$ and $A$.
\end{lem}

\begin{proof}

We have $\ker u:=\{P\in K[W_k,k\in I]\ |\ P=\sum_{1\leq k\leq n}Q_kP_k  \}$ where $Q_k\in K[W_k,1\leq k\leq n]$. Let $z_k\in K^{alg}$ with $P_k(z_k)=0$. Let $\sigma: K[W_k,1\leq k\leq n]\to K^{alg}[W_k,k\in I]$ such that $\sigma(a)=a$ for $a\in K$, $\sigma (W_k)=W_k$ for $k\in I$ and $\sigma(W_k)=z_k$ for $k\in J$, then 
\begin{eqnarray} 
P=\sigma(P)=\sum_{k\in I}\sigma (Q_k)P_k,\ {\rm and}\ \sigma(Q_k)\in K^{alg}[W_k,k\in I]. \label{star}
\end{eqnarray}
It follows that there is a finite field extension $L/K$ inside $K^{alg}$ with $\sigma(Q_k)\in L[W_k,k\in I]$. Let $\{e_0=1,e_1,\cdots,e_m\}$ a basis for $L/K$, then $L[W_k,k\in I]=\bigoplus_{0\leq s\leq m}K[W_k,k\in I]e_s$. It follows from \eqref{star} there is $R_k\in K[W_k,k\in I]$ with $P=\sum_{k\in I}R_kP_k$, thus $\ker u=\sum_{i\in I}P_i K[W_k,k\in I]$.

Let $\pi:K[W_k,k\in I]\to B$, be the canonical $K$-homomorphism and let $v:B\to A$ be the unique $K$-homomorphism with $u=v\circ \pi$. Then $v$ induces an isomorphism from $B$ to $K[w_k,k\in I]\subset A$.

So we have the following commutative diagram 

$$\begin{tikzcd}
K[W_k,k\in I] \arrow{d}{\pi}\arrow{r}{u}
& A  \\
B \ar[ur, "v" ,hook]
\end{tikzcd}$$

it extends in the following commutative diagram
 
$$\begin{tikzcd}
K[W_k,k\in I][W_k,k\in J] \arrow{d}{\tilde\pi}\arrow{r}{\tilde u}
& A  \\
B[W_k,k\in J] \ar[ur, "\tilde v"]
\end{tikzcd} $$

where $\tilde u (W_k)=w_k$, $\tilde\pi (W_k)=W_k$, $\tilde v (W_k)=w_k$ for $k\in J$.

We claim that $\ker \tilde v=\sum_{k\in J}P_kB[W_k,k\in J]$.

Let $Q\in \ker \tilde v$ and $\tilde Q\in K[W_k,k\in I][W_k,k\in J]$ such that $Q=\tilde \pi (\tilde Q)$. Then $\tilde u (\tilde Q)=\tilde v\tilde \pi (\tilde Q)=0$ and so $\tilde Q\in \sum_{1\leq k\leq n}K[W_t,1\leq t\leq n]P_k$ and 
$Q\in \tilde \pi(\sum_{1\leq k\leq n}P_kK[W_t,1\leq t\leq n])=\sum_{k\in J}P_kB[W_k,k\in J].$
 
\end{proof}

As $A\simeq \frac{K[W_i,\ i\in I,W'_j\ j\in J]}{(P_i,\ i\in I,P'_j\ j\in J)}$, it follows from Lemma \ref{lem5.1} that $A=\frac{K[w_i,\ i\in I][W'_j\ j\in J]}{(P'_j\ j\in J)} $  where \newline $K[w_i,\ i\in I]=\frac{K[W_i,\ i\in I]}{(P_i,\ i\in I)}$. Now with \eqref{39} we deduce that $A\simeq \frac{L[W'_j,j\in J]}{(P'_j)}\simeq L^{p^{n-r}}$. Moreover $A$ is a $K$-etale algebra since $L/K$ is separable.

ii) We show that the group $\Aut_K A$ is 
a semidirect product of the groups $\mathfrak S_{p^{n-r}}$ and $((\Z/p\Z)^r)^{p^{n-r}}$.

This follows from i) and the following lemma

\begin{lem} Let $K$ be a commutative field (no condition on the characteristic) and $L/K$ be a finite Galois extension  of group $G$. Let $t\geq 1$ and $A:=L^t$ and $\Aut_KA$ be the group of $K$-automorphisms of $A$. Let $\rho: \mathfrak S_t\to \Aut_KA$, 
 where $\rho(\sigma)(x_1,x_2,\cdots,x_t):=(x_{\sigma^{-1}(1 )},x_{\sigma^{-1}(2 )},\cdots,x_{\sigma^{-1}( t)})$ and $\varphi: G^t\to \Aut_KA$ such that
 
 \centerline{ $\varphi(g_1,g_2,\cdots,g_t)(x_1,x_2,\cdots,x_t):=(g_1(x_1),g_2(x_2),\cdots,g_t(x_t)),$}
 
 \noindent then $\rho$ and $\varphi$ are two injective homomorphisms of groups with
 
 \centerline{$\rho(\sigma)\varphi(g_1,g_2,\cdots,g_t)\rho(\sigma)^{-1}=\rho(g_{\sigma^{-1}(1 )},g_{\sigma^{-1}(2 )},\cdots,g_{\sigma^{-1}( t)})$}
 \noindent and $\Aut_KA$ is the 
 internal semidirect product of the groups $\rho(\mathfrak S_t)\simeq \mathfrak S_t$ and  $\varphi(G^t)\simeq G^t$.
\end{lem}

\begin{proof}

We can assume that $t\geq 2$ and we show the last assertion.

Let ${\mathfrak M}_i:=\{(x_1,\cdots,x_{i-1},0,x_{i+1},\cdots,x_t)\}$ with $x_j\in L$ for $j\neq i$, then ${\mathfrak M}_i$ is a maximal ideal of $A$  and $\frac{A}{{\mathfrak M}_i}\simeq L$. Then  $\{{\mathfrak M}_i,1\leq i\leq t\}$ is the set of maximal ideals  $\Spm(A)$ of $A$. 

Now if $\Phi\in \Aut_KA$, $\Phi$ induces a bijection of  $\Spm(A)$, so there is $\sigma\in \mathfrak S_t$ with $\Phi({\mathfrak M}_i)={\mathfrak M}_{\sigma^{-1}(i)}$ for $1\leq i\leq t$ hence $\rho(\sigma^{-1})\Phi({\mathfrak M}_i)={\mathfrak M}_i$  for $1\leq i\leq t$.

Let $\Psi:=\rho(\sigma^{-1})\Phi$, then for $1\leq i\leq t$ we have 
$\Psi (\cap_{j\neq i}{\mathfrak M}_j)=\cap_{j\neq i}{\mathfrak M}_j=(0,0,\cdots,L,0,\cdots,0)$ where only the $i$-th component is not zero. Thus 
$\Psi$ induces a $K$-automorphism $g_i$ of $L$. Thus we have $\Psi=\varphi(g_1,g_2,\cdots,g_t)$ and so $\Phi =\rho(\sigma)\varphi(g_1,g_2,\cdots,g_t)$. 

\end{proof}
iii) We show \eqref{F29}

 Let $v_i:=w_i+(f_i+f_i^p+...+f_i^{p^{n-2}}) $, then $w=\delta_{\underline{w}}(\underline{f})=\delta_{\underline{v}}(\underline{f})$ and  $v_i^p=v_i+f_i^{p^{n-1}}$. It follows that $w^{p^j}=
 \delta_{\underline{v}}(\underline{f}^{p^{j}})$ for $0\leq j\leq n-1$ and $w^{p^{n}}= \delta_{\underline{v}}(\underline{f}^{p^{n}})+\Delta_n(\underline{f}^{p^{n-1}}).$ 
 Since $\delta_{\underline{v}}(\underline{f}^{p^{j}})=\sum_{1\leq i\leq n}(-1)^{i-1}\Delta_{n-1}(\hat{f_i})^{p^j}v_i$, for $1\leq j\leq n$, we deduce from Cramer's formulas the announced formula for $w_i$ as a polynomial function of $w$.  Finally the previous formulas also give a non-trivial linear relation between the columns of the following determinant
 
$$\left|
\begin{array}{cccc}
\Delta_{n-1}(\underline{\hat{f}_1})&\cdots&\Delta_{n-1}(\underline{\hat{f}_n})&w\\
\Delta_{n-1}(\underline{\hat{f}_1})^p&\cdots&\Delta_{n-1}(\underline{\hat{f}_n})^p & w^p \\
\vdots&\cdots&\vdots&\vdots\\   
\Delta_{n-1}(\underline{\hat{f}_1})^{p^{n-1}}&\cdots&\Delta_{n-1}(\underline{\hat{f}_n})^{p^{n-1}}&w^{p^{n-1}} \\
\Delta_{n-1}(\underline{\hat{f}_1})^{p^{n}}&\cdots&\Delta_{n-1}(\underline{\hat{f}_n})^{p^{n}}&w^{p^{n}}-\Delta_{n}(\underline f^{p^{n-1}})
\end{array}
\right|
$$ 
 which is zero; hence we get \eqref{F29} where $Q(W):=\frac{\Delta_{n+1}(\Delta_{n-1}(\underline{\hat{f}_1}),\cdots,\Delta_{n-1}(\underline{\hat{f}_n}),W)}{\Delta_{n}(\Delta_{n-1}(\underline{\hat{f}_1}),\cdots,\Delta_{n-1}(\underline{\hat{f}_n}))  }-\Delta_n(\underline f)^{p^{n-1}}$ whose first term is the monic additive polynomial whose roots are the $\F_p$-space $\oplus_{1\leq i\leq n}\F_p\Delta_{n-1}(\underline{\hat{f}_i})$; then the equality $Q(W)=W^{p^n}+(\sum_{1\leq i\leq n-1}(-1)^{n-i}\Delta_n(\underline f)^{p^{n-1}-p^{i-1}-p^i}(\Delta[n-i](\underline f))^{p^{i-1}}W^{p^{n-i}}) +(-1)^n\Delta_n(\underline f)^{p^{n-1}-1}W-\Delta_n(\underline f)^{p^{n-1}},$ follows from Elkies (\cite{E} 4.28) and the proof of  Proposition \ref{Nprop2.4}.

\end{proof}

\begin{rem}
 i) Since $\Delta_r(f_{i},\ i\in I)\neq 0$,  Proposition \ref{prop5.2} applied to the $K$-algebra
 
 \noindent $L=\frac{K[W_{k},\ k\in I]}{(P_k)_{k\in I}}$ gives a generator of the extension $L/K$. 
 
 ii) One may consult (\cite{MT}) for an application in the case where $K=k((t))$ is a field of formal power series. 
\end{rem}

\begin{coro} \label{coro5.1} We keep the notations of the proposition.
 
Let $F:=\oplus_{1\leq i\leq n}\F_pf_i$, $Z:=\oplus_{1\leq i\leq n}\F_p\Delta_{n-1}(\underline{\hat{f}_i})$ be two $\F_p$-subspaces of $K$ associated to $\underline f$. Let $z\in Z$ and 
 $\sigma_z$ be the $K$-algebra automorphism of  $K[W]$ such that $\sigma_z(W):=W+z$; then 
 $\sigma_z$ induces a $K$-algebra automorphism of $A$ that we still denote by $\sigma_z$. The map $z\in Z\to \sigma_z \in \Aut_KA$ is  an injective group homomorphism  and its image is a subgroup $G$ of $\Aut_KA$ which is isomorphic to $(\Z/p\Z)^n$.
 Let $U:=(\frac{Z}{\Delta_n(\underline f)})^p\subset K$ be the $\F_p$-space of roots of the reversed polynomial  of $P_F(X):=\prod_{f\in F}(X-f)=\frac{\Delta_{n+1}(\underline{f},X)}{\Delta_{n}(\underline{f})}=X^{p^n}+\cdots+(-1)^n\Delta_{n}(\underline{f})^{p-1}X$  (cf. \eqref{F2} and Section 3.2).
 
 Let $z\in Z$, we can write $z=\Delta_{n}(\underline{f}) u^{1/p}$ with $u\in U$ and for $\underline\epsilon:=(\epsilon_1,\epsilon_2,...,\epsilon_n)\in \F_p^n-(0,0,\cdots,0)$ let $w_{\underline\epsilon }:= \sum_{1\leq i\leq n}\epsilon_iw_i\in A$ (resp. $f_{\underline\epsilon }:= \sum_{1\leq i\leq n}\epsilon_if_i\in K$) then $w_{\underline\epsilon }^p-w_{\underline\epsilon } =f_{\underline\epsilon }$ and  $K[w_{\underline\epsilon }]\subset A $ is isomorphic to the $K$-algebra $\frac{K[W_{\underline\epsilon}]}{(W_{\underline\epsilon }^p-W_{\underline\epsilon } -f_{\underline\epsilon }) }$ and is so a $K$-subalgebra of dimension $p$. Moreover 
 $\sigma_z(w_{\underline\epsilon })=w_{\underline\epsilon }+(-1)^{n-1}E(f_{\underline\epsilon },u)$ where $E: F\times U\to \F_p$ is the Elkies pairing (see Section 3.2.A.). 
 
 In particular when $r=n$ i.e. $A$ is a field and the group $G$ is the full group $\Aut_K A$, then
 the set $\{ K[w_{\underline\epsilon }]\ |\ \underline\epsilon \in \cal E \}$ where $\cal E$ is a set of representatives of $\mathbb{P}^{n-1}(\F_p)$, is equal to  the $\frac{p^n-1}{p-1}$, $p$-cyclic extensions of $K$ inside $A$. 
\end{coro}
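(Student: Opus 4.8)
The plan is to use the distinguished generator $w=\sum_i(-1)^{i-1}\Delta_{n-1}(\underline{\hat f_i})w_i$ of $A$ supplied by Proposition \ref{prop5.2}, on which $\sigma_z$ acts by the mere translation $w\mapsto w+z$, and then to transport this action back to the generators $w_i$ and to the elements $w_{\underline\epsilon}$.

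Write $b_i:=(-1)^{i-1}\Delta_{n-1}(\underline{\hat f_i})$, so that $Z=\bigoplus_i\F_p b_i$. By \eqref{F2} applied to $Z$, the polynomial $P_Z(W):=\prod_{\zeta\in Z}(W-\zeta)$ is $\F_p$-linear with kernel $Z$, and \eqref{F29} reads $Q(W)=P_Z(W)-\Delta_n(\underline f)^{p^{n-1}}$. For $z\in Z$, additivity and $P_Z(z)=0$ give $Q(W+z)=Q(W)$ identically, so $\sigma_z\colon W\mapsto W+z$ preserves the ideal $(Q(W))$ and descends to a $K$-automorphism of $A=K[W]/(Q(W))$. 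Then $\sigma_z\sigma_{z'}=\sigma_{z+z'}$, and $\sigma_z=\mathrm{id}$ forces $z=\sigma_z(w)-w=0$; hence $z\mapsto\sigma_z$ is an injective homomorphism with image $G\cong Z\cong(\Z/p\Z)^n$, the last isomorphism because $\Delta_n(\underline f)\neq0$ makes the $b_i$ linearly independent over $\F_p$ (cf. the remark following Proposition \ref{nprop2.3}). The identification $U=\Ker\rho P_F$ is Proposition \ref{nprop2.3} with $\underline w=\underline f$, $\hat W=Z$, $q=p$ and $c_n=1$. For the subalgebras, $\epsilon_i\in\F_p$ gives $w_{\underline\epsilon}^p-w_{\underline\epsilon}=\sum_i\epsilon_i(w_i^p-w_i)=f_{\underline\epsilon}$, so the surjection $K[W_{\underline\epsilon}]/(W_{\underline\epsilon}^p-W_{\underline\epsilon}-f_{\underline\epsilon})\twoheadrightarrow K[w_{\underline\epsilon}]$ is defined; it is an isomorphism because the source is separable of dimension $p$ and the product description $A\simeq L^{p^{n-r}}$ of Proposition \ref{prop5.2} shows that $w_{\underline\epsilon}$ already takes all $p$ geometric values $x+\F_p$ of a root $x$ of $W^p-W-f_{\underline\epsilon}$.

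The core is the computation of $\sigma_z(w_{\underline\epsilon})$. Put $t_i:=\sigma_z(w_i)-w_i\in A$; since $\sigma_z$ fixes $K$ and $w_i^p-w_i=f_i\in K$, each $t_i$ satisfies $t_i^p=t_i$. From the proof of Proposition \ref{prop5.2} one has $w^{p^j}=\sum_i b_i^{p^j}v_i$ with $v_i-w_i\in K$, whence $\sigma_z(w^{p^j})-w^{p^j}=\sum_i b_i^{p^j}t_i$; on the other hand $\sigma_z(w^{p^j})=(w+z)^{p^j}=w^{p^j}+z^{p^j}$. This yields the linear system $z^{p^j}=\sum_i b_i^{p^j}t_i$ for $0\le j\le n-1$, whose matrix is the Moore matrix of the $b_i$ with determinant $\Delta_n(\underline f)^{1+p+\cdots+p^{n-2}}\neq0$ (case $m=0$ of \eqref{F13}). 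Cramer forces each $t_i\in K$, and then $t_i^p=t_i$ gives $t_i\in\F_p$. Writing $u=(\delta_\varphi(\underline f)/\Delta_n(\underline f))^p$ as in \eqref{F10}, with $\delta_\varphi(\underline f)=\sum_i\alpha_i b_i$ and $\varphi=\sum_i\alpha_i f_i^\star$, the relation $z=\Delta_n(\underline f)u^{1/p}=\sum_i\alpha_i b_i$ together with the $j=0$ equation $z=\sum_i b_i t_i$ and $\F_p$-independence of the $b_i$ gives $t_i=\alpha_i$. Hence $\sigma_z(w_{\underline\epsilon})-w_{\underline\epsilon}=\sum_i\epsilon_i\alpha_i$, and the residue formula \eqref{F11} combined with $E=f$ (Proposition \ref{propAcc}) gives $\sum_i\epsilon_i\alpha_i=(-1)^{n-1}E(f_{\underline\epsilon},u)$, i.e. $\sigma_z(w_{\underline\epsilon})=w_{\underline\epsilon}+(-1)^{n-1}E(f_{\underline\epsilon},u)$.

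When $r=n$, Proposition \ref{prop5.2} makes $A$ a Galois extension of $K$ with group $(\Z/p\Z)^n$, and $|G|=p^n=|\Aut_KA|$ forces $G=\Aut_KA$. Each $K[w_{\underline\epsilon}]$ is a degree-$p$ subfield whose fixing subgroup is $\{\sigma_z:E(f_{\underline\epsilon},u)=0\}$, corresponding under $z\mapsto u$ to the orthogonal $f_{\underline\epsilon}^\perp\subset U$ for the Elkies pairing. Perfectness of $E$ makes $f_{\underline\epsilon}^\perp$ a hyperplane (confirming degree $p$) and turns $\underline\epsilon\mapsto f_{\underline\epsilon}^\perp$ into a bijection from $\mathbb P^{n-1}(\F_p)=\mathbb P(F)$ onto the hyperplanes of $U$, i.e. onto the index-$p$ subgroups of $G$; the Galois correspondence then identifies the family $\{K[w_{\underline\epsilon}]\}_{\underline\epsilon\in\mathcal E}$ with the complete set of $\frac{p^n-1}{p-1}$ $p$-cyclic subextensions of $K$ in $A$. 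The main obstacle is the computation of $\sigma_z(w_{\underline\epsilon})$ above: first reducing the a priori $A$-valued differences $t_i$ to scalars in $\F_p$ by means of the Moore system and \eqref{F13}, and then recognizing the resulting $\F_p$-bilinear expression $\sum_i\epsilon_i\alpha_i$ as a value of the Elkies pairing through the residue identity $E=f$.
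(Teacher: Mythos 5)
Your proof is correct, and its skeleton is the paper's: everything is routed through the distinguished generator $w$ of Proposition \ref{prop5.2}, the relations $w^{p^j}=\sum_i b_i^{p^j}v_i$ (with $b_i:=(-1)^{i-1}\Delta_{n-1}(\underline{\hat{f}_i})$ and the $v_i$ from the proof of that proposition), and the identification $E=f$ of Proposition \ref{propAcc} combined with \eqref{F11}. The execution differs at the key step, and the difference is worth recording. The paper substitutes $w\mapsto w+z$ into the explicit Cramer expression of $w_i$ as an element of $K[w]$ given in Proposition \ref{prop5.2}, and uses multilinearity of the Moore determinant in the column occupied by $w$: since $z=\sum_j\alpha_j b_j$ and determinants with a repeated column vanish, the extra term is exactly $\alpha_i$, giving $\sigma_z(w_i)=w_i+\alpha_i$ in one line. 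You instead solve for the increments $t_i:=\sigma_z(w_i)-w_i$: the Artin--Schreier relation gives $t_i^p=t_i$, inverting the Moore system (whose determinant is nonzero by the case $m=0$ of \eqref{F13}) gives $t_i\in K$, hence $t_i\in\F_p$, and the $j=0$ row forces $t_i=\alpha_i$. This variant never needs the explicit polynomial expression of $w_i$ in $K[w]$, only invertibility of the Moore matrix of the cofactors, at the price of the (nice) extra observation that $t_i$ lies in $K$ and satisfies $t_i^p=t_i$. You also make explicit two points the paper leaves silent: that $Q(W+z)=Q(W)$, via \eqref{F29} and additivity of the leading term of $Q$, so that $\sigma_z$ genuinely descends to $A$; and, for $r=n$, the Galois bookkeeping identifying the fixing subgroups with the hyperplanes $f_{\underline\epsilon}^\perp$ through perfectness of $E$.

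The one soft spot is your dimension count for $K[w_{\underline\epsilon}]$. Read through the coordinates of $A\simeq L^{p^{n-r}}$, the claim that ``$w_{\underline\epsilon}$ takes all $p$ geometric values'' is false in general: when $r=n$ there is a single factor, hence a single coordinate. The claim is true, and your argument goes through, if ``values'' means images under the $p^n$ distinct $K$-algebra homomorphisms $A\to K^{alg}$: these realize every tuple of roots $(x_i+c_i)_{1\leq i\leq n}$, so the image of $w_{\underline\epsilon}$ runs over a full coset $x+\F_p$; then $K[w_{\underline\epsilon}]$ admits $p$ distinct $K$-homomorphisms into $K^{alg}$, hence has dimension at least $p$, and the surjection from $\frac{K[W_{\underline\epsilon}]}{(W_{\underline\epsilon}^p-W_{\underline\epsilon}-f_{\underline\epsilon})}$ is an isomorphism. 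This is easily repaired, but note that the paper's route is shorter: an $\F_p$-linear change of the variables $W_k$ reduces to $\underline\epsilon=(0,\cdots,0,1)$, and Lemma \ref{lem5.1} gives the result at once.
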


\begin{proof}
i) We show the equality $\sigma_z(w_{\underline\epsilon })=w_{\underline\epsilon }+(-1)^{n-1}E(f_{\underline\epsilon },u)$.

We have $z:=\sum_{1\leq i\leq n}\alpha_i(-1)^{i-1}\Delta_{n-1}(\underline{\hat{f}_i})$ with $(\alpha_1,...,\alpha_n)\in \F_p^n$. Thus

 $\sigma_z(w_{\underline\epsilon })=\sum_{1\leq i\leq n}\epsilon_i(w_i+\frac{\Delta_n(\Delta_{n-1}(\underline{\hat{f}_1}),\cdots,(-1)^{i-2}\Delta_{n-1}(\underline{\hat{f}_{i-1}}),z,(-1)^{i}\Delta_{n-1}(\underline{\hat{f}_{i+1}}),\cdots, (-1)^{n-1}\Delta_{n-1}(\underline{\hat{f}_n}) )}{\Delta_n(\Delta_{n-1}(\underline{\hat{f}_1}),\cdots,(-1)^{j-1}\Delta_{n-1}(\underline{\hat{f}_{j}}),\cdots, (-1)^{n-1}\Delta_{n-1}(\underline{\hat{f}_n}) )}) =w_{\underline\epsilon } +\sum_{1\leq i\leq n}\epsilon_i\alpha_i=w_{\underline\epsilon }+(-1)^{n-1}E(f_{\underline\epsilon },u)$ where $E: (F,U)\to \F_p$ is the Elkies pairing (see 3.2.B and Proposition \ref{propAcc}). 
 
 ii) We show that $K[w_{\underline\epsilon }]\subset A $ is a $K$-subalgebra of dimension $p$.
 
 As the $w_i, 0\leq i\leq n$, are $\F_p$-linearly independant, after a $\F_p$-linear change of variables we can assume that 
 $\epsilon=(0,0,\cdots,n-1,1)$, then the results follows from 
 Lemma \ref{lem5.1}.
 
 The case $n=r$ in Corollary \ref{coro5.1} then follows from Galois theory.

\end{proof}

\bibliographystyle{alpha}

Jean Fresnel, 

\hfill Univ. Bordeaux, CNRS, Bordeaux INP, IMB, UMR 5251, 

\hfill F-33400, Talence, France 

\vskip 2mm \vskip 0pt

Michel Matignon,

\hfill Univ. Bordeaux, CNRS, Bordeaux INP, IMB, UMR 5251, 

\hfill F-33400, Talence, France 

\vskip 2mm \vskip 0pt
E-mail address: Jean.Fresnel$@$math.u-bordeaux.fr  

E-mail address: Michel.Matignon$@$math.u-bordeaux.fr

\end{document}